\newtheorem{prop}{Proposition}[section]
\newtheorem{lemma}[prop]{Lemma}
\newtheorem{thm}[prop]{Theorem}
\theoremstyle{definition}
\newtheorem{defn}[prop]{Definition}
\newtheorem{question}[prop]{Question}
\newtheorem{rmk}[prop]{Remark}
\newtheorem{ex}[prop]{Example}
\newtheorem{exer}[prop]{Exercise}
\newtheorem{notn}[prop]{Notation}
\DeclareMathOperator{\wt}{wt}
\DeclareMathOperator{\conv}{conv}
\DeclareMathOperator{\Int}{Int}
\DeclareMathOperator{\tr}{tr} 
\DeclareMathOperator{\spec}{Spec} \DeclareMathOperator{\Sym}{Sym}
\DeclareMathOperator{\Id}{Id}          
  \DeclareMathOperator{\im}{Im}
\DeclareMathOperator{\proj}{Proj}  
     \DeclareMathOperator{\Stab}{Stab} 
\DeclareMathOperator{\Mat}{Mat}
\DeclareMathOperator{\Gr}{Gr}     
\newcommand{\ra}{\rightarrow}      
\newcommand{\lra}{\longrightarrow}
\newcommand{\hU}{\widehat{U}}
\DeclareMathOperator{\Hom}{Hom}
\DeclareMathOperator{\Lie}{Lie}
\DeclareMathOperator{\Spec}{Spec}
\DeclareMathOperator{\Aut}{Aut}
\newcommand{\op}{\mathrm{op}}
\newcommand{\Set}{\mathrm{Set}}
\newcommand{\Sch}{\mathrm{Sch}}
\def\cA{\mathcal A}\def\cB{\mathcal B}
\def\cF{\mathcal F}\def\cG{\mathcal G}
\def\cL{\mathcal L}
\def\cM{\mathcal M}\def\cO{\mathcal O}
\def\cQ{\mathcal Q}
\def\cU{\mathcal U}\def\cV{\mathcal V}
\def\AA{\mathbb A}\def\CC{\mathbb C}
\def\GG{\mathbb G}
\def\NN{\mathbb N}\def\PP{\mathbb P}
\def\QQ{\mathbb Q}\def\RR{\mathbb R}
\def\UU{\mathbb U}
\def\ZZ{\mathbb Z}
\def\fg{\mathfrak g}
\def\fm{\mathfrak m}
\def\fu{\mathfrak u}
\def\fK{\mathfrak K}
\def\fM{\mathfrak M}
\def\fX{\mathfrak X}
 \def\GL{\mathrm{GL}} \def\SL{\mathrm{SL}}
\def\PGL{\mathrm{PGL}}
     \def\PSh{\mathrm{PSh}}
\def\@tocline#1#2#3#4#5#6#7{\relax
  \ifnum #1>\c@tocdepth 
  \else
    \par \addpenalty\@secpenalty\addvspace{#2}%
    \begingroup \hyphenpenalty\@M
    \@ifempty{#4}{%
      \@tempdima\csname r@tocindent\number#1\endcsname\relax
    }{%
      \@tempdima#4\relax
    }%
    \parindent\z@ \leftskip#3\relax \advance\leftskip\@tempdima\relax
    \rightskip\@pnumwidth plus4em \parfillskip-\@pnumwidth
    #5\leavevmode\hskip-\@tempdima
      \ifcase #1
       \or\or \hskip 1em \or \hskip 2em \else \hskip 3em \fi%
      #6\nobreak\relax
    \hfill\hbox to\@pnumwidth{\@tocpagenum{#7}}\par
    \nobreak
    \endgroup
  \fi}
\newsavebox{\@brx}
\newcommand{\llangle}[1][]{\savebox{\@brx}{\(\m@th{#1\langle}\)}%
  \mathopen{\copy\@brx\kern-0.5\wd\@brx\usebox{\@brx}}}
\newcommand{\rrangle}[1][]{\savebox{\@brx}{\(\m@th{#1\rangle}\)}%
  \mathclose{\copy\@brx\kern-0.5\wd\@brx\usebox{\@brx}}}
\title[Moduli spaces and GIT: old and new]{Moduli spaces and geometric invariant theory:\\ old and new perspectives}
\author{Victoria Hoskins}
\dedicatory{For Peter Newstead on his 80th Birthday}
\begin{document}

\maketitle

\begin{abstract}
Many moduli spaces are constructed as quotients of group actions; this paper surveys the classical theory, as well as recent progress and applications. We review geometric invariant theory for reductive groups and how it is used to construct moduli spaces, and explain two new developments extending this theory to non-reductive groups and to stacks, which enable the construction of new moduli spaces.
\end{abstract}

\setcounter{tocdepth}{1}
\tableofcontents

\section*{Introduction}

Mumford's geometric invariant theory (GIT) \cite{mumford} for reductive groups provides a method for constructing quotients of reductive group actions in algebraic geometry. For a reductive group acting on a projective scheme, GIT provides an open \emph{semistable} set admitting a categorical quotient which is projective and constructed from the invariant ring; furthermore, the semistable set can be explicitly described using (torus) weights via the Hilbert--Mumford criterion, rather than in terms of non-vanishing invariants.

Whilst reductive GIT has been successfully employed to construct numerous moduli spaces, it has some limitations. First, it only provides moduli spaces of \emph{semistable} objects. Second, it only applies to \emph{reductive} group actions. Third, it only applies in the situation where the moduli problem is presented in terms of a \emph{group action}. Two recent developments aim to overcome some of these issues: GIT for \emph{non-reductive groups} and \emph{stacky generalisations} of GIT. 

One of the first challenges for non-reductive group actions is the possibility of non-finitely generated invariant rings; the best-known example is Nagata's counterexample \cite{nagata} to Hilbert's 14th Problem. However, even when non-reductive invariant rings are finitely generated, the corresponding \lq GIT quotient' is not well-behaved (for example, the quotient map might not even be surjective and its image may only be constructible, see $\S$\ref{sec additive bad}). Although it is possible to construct geometric quotients of open subsets \cite{Fauntleroy,Rosenlicht,Winkelmann}, these open subsets are typically hard to describe explicitly. However, recent work on GIT  \cite{BDHK,BDHK_proj} for non-reductive groups with \emph{graded unipotent radical} (e.g.\ $\GG_a \rtimes \GG_m$ or parabolic subgroups) has enabled the construction of projective quotients of certain stable sets, which admit explicit Hilbert--Mumford type descriptions; the price to pay for obtaining these explicit projective non-reductive GIT quotients is that one must impose certain stabiliser assumptions. One of the goals of this survey is to explain the origins, assumptions and results of non-reductive GIT in as simple a context as possible to make them accessible to a broad audience, as well as to highlight some exciting applications.

We also outline another significant development that extends ideas of (reductive) GIT to stacks, as pioneered by Alper, Halpern--Leistner and Heinloth \cite{Alper_GMS,HL_Theta,AHLH,Heinloth_HMstacks}. Alper's notion of \emph{good and adequate moduli spaces} of stacks enables GIT-free constructions of moduli spaces. This is even more tangible following the recent existence criteria of Alper--Halpern-Leistner--Heinloth \cite{AHLH}, which equates the existence of moduli spaces to two simple valuative criteria and has been applied to various moduli problems \cite{AlperBelmans,ABHLX_Kstab,BayerEtAl,BDFHMT}. 

However, adequate and good moduli spaces are locally modelled on reductive GIT and require closed points to have reductive stabiliser groups. Thus, in some senses these two recent developments are orthogonal to each other and ideally there should eventually be an extension of non-reductive GIT to stacks.

\subsection*{Acknowledgements} I am indebted to both Peter Newstead and Frances Kirwan, as I learned the basics of reductive GIT from Peter Newstead's Tata lecture notes \cite{newstead} and discussions with Frances Kirwan. I am very grateful to  Greg B\'{e}rczi, Dominic Bunnett, Eloise Hamilton, Josh Jackson and Frances Kirwan for numerous conversations on non-reductive GIT. I would also like to thank the organisers of VBAC 2022 for soliciting this paper in honour of Peter Newstead.

\subsection*{Conventions} Throughout we will assume $k$ is an algebraically closed field and all schemes are assumed to be finite type $k$-schemes, unless otherwise stated. By a point, we will mean a $k$-point (or equivalently, a closed point).

\section{Moduli problems and group actions}

We start with an example-driven introduction to moduli problems in $\S$\ref{sec moduli functor} and describe the relation to group actions in $\S$\ref{sec const moduli gp}. Finally in $\S$\ref{sec gp action}, we give some basic definitions on algebraic groups, actions and quotients, which lays the foundations for GIT in $\S$\ref{sec GIT}.

\subsection{Moduli functors and spaces}\label{sec moduli functor}

Naively, a moduli problem is a collection $\cA$ of objects with an equivalence relation $\sim$ on $\cA$ and we would like to give the set of equivalence classes $\cA/\sim$ the structure of a scheme that encodes how objects vary continuously in \lq families'.

\begin{ex}\ 
\begin{enumerate}
\item Let $\cA$ be the set of $r$-dimensional linear subspaces of an $n$-dimensional $k$-vector space and $\sim$ be equality.
\item Let $\cA$ be the set of finite-dimensional $k$-vector spaces with an endomorphism and $\sim$ be vector space isomorphisms commuting with the endomorphism.
\item Let $\cA$ be the set of $n \times n$ matrices over $k$ and $\sim$ be the equivalence relation given by similarity of matrices. 
\item Let $\cA$ to be the set of hypersurfaces of degree $d$ in $\PP^n$ and $\sim$ be the relation given by projective change of coordinates.
\item Let $\cA$ be the collection of smooth projective curves of fixed genus and $\sim$ be the relation given by isomorphism.
\item Let $\cA$ be the collection of vector bundles on a fixed scheme $X$ and $\sim$ be the relation given by isomorphisms of vector bundles.
\end{enumerate}
\end{ex}

Often there is a natural notion of families of objects over a scheme $S$ and an extension of $\sim$ to families over $S$, such that we can pullback families by morphisms $T \ra S$ compatibly with the notion of equivalence.

\begin{ex}\ 
\begin{enumerate}
\item A family over $S$ of $r$-dimensional linear subspaces of an $n$-dimensional vector space is a rank $r$ vector subbundle $\cV \subset \cO_S^{\oplus n}$.
\item A family over $S$ of vector spaces with endomorphisms is a vector bundle $\cV$ over $S$ with an endomorphism $\Phi: \cV \ra \cV$.
\end{enumerate}
\end{ex}

The next example shows there might be several ways to extend $(\cA,\sim)$ to families over $S$.

\begin{ex}
For vector bundles on a fixed scheme $X$ up to isomorphism, the natural notion for a family over $S$ is a vector bundle $\cF$ over $X \times S$ over $S$, but there are at least two natural equivalence relations:
\[\begin{array}{ccl} \cF \sim'_S \cG & \iff & \cF \cong \cG \\ \cF \sim_S \cG &\iff & \cF \cong \cG \otimes \pi_S^* \cL \mathrm{\:\:for \:\: a \: \: line \:\: bundle \: \:} \cL \ra S, \end{array}\]
where $\pi_S : X \times S \ra S$. Since $\cL \ra S$ is locally trivial, there is a cover $S_i$ of $S$ such that $\cF|_{X \times S_i} \cong \cG|_{X \times S_i}$. Hence $\sim_S$ can be thought of a Zariski local version of $\sim'_S$. 
\end{ex}

We can now give a more precise definition of a moduli problem using families.

\begin{defn}[Moduli problem and moduli functor]
A \emph{moduli problem} consists of
\begin{enumerate}
\item for each scheme $S$, a collection $\cA_S$ of families over $S$ with an equivalence relation $\sim_S$, 
\item for each morphism $T \ra S$ of schemes, a pullback map $f^* : \cA_S \ra \cA_T$ 
\end{enumerate}
such that
\begin{enumerate}
\renewcommand{\labelenumi}{\: \: (\roman{enumi})}
\item for $f: T \ra S$ and equivalent families $\cF \sim_S \cG$ over $S$, we have $f^* \cF \sim_T f^* \cG$;
\item for any family $\cF$ over $S$, we have $\text{Id}_S^*\cF = \cF$;
\item for any morphisms $f: T \ra S$ and $g: S \ra R$, and a family $\cF$ over $R$, we have an equivalence $(g \circ f)^*\cF \sim_T f^* g^* \cF$.
\end{enumerate}
This gives rise to a \emph{moduli functor} $\cM : \Sch^{\op} \ra \Set$ where 
\[\cM(S) := \{\mathrm{families \:\:over\: } \: S\}/\sim_S \quad \text{and} \quad \cM(f: T \ra S)=f^* : \cM(S) \ra \cM(T).\]
\textbf{Notation:} For a family $\cF$ over $S$ and a point $s : \spec k \ra S$, we write $\cF_s :=s^*\cF$ to denote the corresponding family over $\spec k$. We write $(\cA,\sim) :=(\cA_{\spec k},\sim_{\spec k})$.
\end{defn}

In particular, a moduli functor is a presheaf on the category $\Sch$ of schemes. Recall that the Yoneda Lemma gives an embedding of the category of schemes into the category of presheaves; more precisely there is a fully faithful functor $h : \Sch \ra \PSh(\Sch)$ which on objects sends a scheme $X$ to its functor of points $\Hom(-, X) : \Sch^{\op} \ra \Set$. A presheaf is called \emph{representable} if it is in the essential image of the Yoneda embedding. 

A moduli functor being representable is the ideal situation and leads to the notion of a fine moduli space, but if that fails, one can instead ask for a universal natural transformation from $\cM$ to the functor of points of a scheme, which leads to the notion of a coarse moduli space. 

\begin{defn}[Fine and coarse moduli spaces]
Let $\cM: \Sch \ra \Set$ be a moduli functor.
\begin{enumerate}[label={\roman*)}]
\item A scheme $M$ is a \emph{fine moduli space} for $\cM$ if it 
represents $\cM$; that is, there is a natural isomorphism $\cM \ra \Hom(-, M)$. In this case, $\Id_M \in \Hom(M,M)$ corresponds to an element of $\cM(M)$ called the \emph{universal family} $\cU$, which is a family over $M$ up to the notion of equivalence.
\item A \emph{coarse moduli space} for $\cM$ is a scheme $M$ with a natural transformation of functors $\eta: \cM \ra h_M$ which is universal (for any natural transformation $\nu: \cM \ra \Hom(-,N)$ to the functor of points of a scheme, there exists a unique morphism $f: M \ra N$ such that $\nu = f_* \circ \eta$) such that $\eta_{\spec k} : \cM(\spec k) \ra h_M(\spec k)$ is bijective.
\end{enumerate}
\end{defn}

\begin{ex}
For 1-dimensional subspaces of $k^n$, a fine moduli space is given by $\PP^n$, with the tautological line bundle $\cO_{\PP^n}(-1) \subset \cO^{\oplus n}_{\PP^n}$ giving a universal family (see \cite[II Theorem 7.1]{hartshorne}).
\end{ex}

\begin{rmk}\
\begin{enumerate}
\item If a fine or coarse moduli space exists, then it is unique up to unique isomorphism.
\item If a fine moduli space exists, then the universal family $\cU$ over $M$ describes all other families in the following sense: for any scheme $S$, we have that a family $\cF \in \cM(S)$ is equivalent to a morphism $f : S \ra M$ with $f^*\cU \sim_S \cF$.
\item Since $\Hom(-,M)$ is a sheaf in the Zariski toplogy (that is, 
for every scheme $S$ and Zariski cover $\{S_i\}$ of $S$, the natural map
\[\{f \in  F(S)\} \longrightarrow \{ (f_i \in F(S_i))_i : f_i|_{S_i \cap S_j} =f_j|_{S_j \cap S_i} \text{ for all } i,j \}\] 
is a bijection), for a moduli functor $\cM$ to admit a fine moduli space it must be a sheaf in the Zariski topology.
\end{enumerate}
\end{rmk}

\begin{exer}
Show that the equivalence relation $\sim'_S$ on families over $S$ of vector bundles on a fixed scheme defines a moduli functor that is not representable. (Hint: Show it fails to be a Zariski sheaf by considering the other equivalence relation $\sim_S$ for families of vector bundles).
\end{exer}

Unfortunately, there may be moduli problems which do not admit even a coarse moduli space.

\begin{exer}
Let $\cM$ be a moduli functor with the \emph{jump phenomenon}; that is, there is a family $\cF$ over $\AA^1$ such that $\cF_s \sim \cF_1$ for all $s \neq 0$ and $\cF_0 \nsim \cF_1$. Show that there is no coarse moduli space for $\cM$ by showing for any natural transformation $\eta : \cM \ra \Hom(-,M)$, the morphism $\eta_{\AA^1}(\cF): \AA^1 \ra M$ is constant.  
\end{exer}

\begin{ex}
Moduli of rank 2 degree 0 vector bundles on $\PP^1$ exhibit the jump phenomenon: there is a family $\cF$ of rank 2 degree 0 vector bundles over $\AA^1$ such that
\[ \cF_s = \left\{ \begin{array}{ll} \cO_{\PP^1}^{\oplus 2} & s \neq 0 \\ \cO_{\PP^1}(1) \oplus \cO_{\PP^1}(-1) & s = 0. \end{array} \right.\]
Indeed this family is constructed using the isomorphisms
\[ \text{Ext}^1(\cO_{\PP^1}(1), \cO_{\PP^1}(-1)) \cong H^1(\PP^1, \cO_{\PP^1}(-2)) \cong H^0(\PP^1,\cO_{\PP^1})^* \cong k.\]
\end{ex}

Another reason for a coarse moduli space to fail to exist is if the moduli problem is \emph{unbounded}: there does not exist a family $\cF$ over a scheme $S$ (of finite type over $k$) such that for any object $E$ (i.e family over $k$), we have $E \sim \cF_s$ for some (possibly non-unique) $s \in S$.

\begin{exer}
Show the moduli problem of rank 2 degree 0 vector bundles on $\PP^1$ is unbounded: suppose there exists a family $\cF$ over $S$ such that for any such vector bundle $E$ we have $\cF_s \sim E$ for some $s \in S$, then show that $S$ cannot be Noetherian by considering the subschemes 
\[ S_n:=\{ s \in S : \dim H^0(\PP^1,\cF_s) \geq n\},\]
which are closed by the semi-continuity theorem.
\end{exer}

A further obstruction to the existence of a coarse moduli space is that there may be non-trivial families which are fibrewise trivial; this may happen when there are non-trivial automorphisms.

One possible solution for dealing with some of these issues is to instead work with a moduli stack; in this case, one can then look for a coarse or good moduli space for this stack as in $\S$\ref{sec stacks beyond GIT}. However, often by imposing a notion of stability on objects, which can be moduli-theoretic or arising from the GIT construction, one obtains a much better-behaved moduli problem for which one can construct moduli spaces.

\subsection{Construction of moduli spaces using group actions}\label{sec const moduli gp}

Many moduli spaces are constructed as quotients of group actions via the following strategy (after fixing any discrete invariants and restricting to a bounded class of objects):
\begin{enumerate}
\item Find an overparametrisation: find a parameter scheme $X$ with a family $\cF$ such that any other family can be locally obtained by pullback from $\cF$ (possibly non-uniquely).
\item Find a group action describing the symmetries: find a group $G$ acting on $X$ such that the orbits correspond to the equivalence classes.
\item Take a quotient (in the category of schemes if possible).
\end{enumerate}
The third step is typically performed using Geometric Invariant Theory (GIT).

Let us give some examples, before describing algebraic group actions in $\S$\ref{sec gp action} in more detail.

\begin{ex}\label{ex moduli gp actions}\
\begin{enumerate}
\item 1-dimensional vector subspaces in $V = k^n$ can be parametrised by fixing a basis vector in $X = V \setminus \{ 0 \}$. Since two basis vectors are related by scalar multiplication, $\GG_m$ acting on $X$ describes the symmetries and $\PP^{n-1} = X /\GG_m$ is the fine moduli space.
\item An $r$-dimensional subspace in $V= k^n$ can be parametrised by choosing a basis, which gives an element in $\Mat_{r \times n}$ of rank $r$ and the choice of basis is controlled by the action of $\GL_r$ by left multiplication. In Exercise \ref{exer grassmannain}, we will see that the open locus of rank $r$ matrices is a GIT semistable locus that admits a quotient, namely a Grassmannian.
\item A projective hypersurface of degree $d$ in $\PP^n$ is given by the vanishing locus $\{ F = 0 \}$ of a degree $d$ homogeneous polynomial in $n+1$ variables. Since $\lambda F$ defines the same hypersurface, one can consider $X = \PP(k[x_0,\dots,x_n]_d)$ and the action of $G = \PGL_n = \Aut(\PP^n)$ describes these hypersurfaces up to change of coordinates.
\item\label{ex moduli 4} For vector bundles of rank $n$ and degree $d$ on a smooth projective curve $C$, provided $d$ is sufficiently large, any \emph{semistable}\footnote{There is a natural notion of semistability involving verifying an inequality of slopes for all subbundles, which turns out to be related to a corresponding GIT notion of semistability.} vector bundle $E$ can be parametrised as a quotient of a fixed vector bundle (see \cite[Lemma 5.2]{newstead}): for $E$ semistable of sufficiently large degree, the evaluation map is surjective and $E$ has vanishing higher cohomology, so by choosing a basis of global sections we obtain a quotient
\[ \cO_C^{\oplus \chi} \cong H^0(E) \otimes \cO_C \stackrel{\mathrm{ev}}{\twoheadrightarrow} E \] 
where $\chi = d + n (1-g)$ is the Euler characteristic of $E$. Consequently, a Quot scheme parametrising quotients of $\cO_C^{\oplus \chi}$ with fixed invariants gives an overparametrisation and the action of $\GL_\chi$ describes the symmetries. 
\end{enumerate}
\end{ex}

\subsection{Algebraic groups, actions and quotients}\label{sec gp action}

Here we focus on the essential notions that we will need, and refer to \cite{borel,brion,milneAGS} for more detailed expositions. 

\begin{defn}
An \emph{algebraic group} over $k$ is a a group object in the category of $k$-schemes (that is, a $k$-scheme $G$ with identity element $e : \Spec k \ra G$, group operation $m : G \times G \ra G$ and inversion $i : G \ra G$ given by morphisms of schemes such that the usual group axioms are stated as commutativity of certain diagrams). We say $G$ is an \emph{affine algebraic group} if the underlying scheme $G$ is affine.
\end{defn}

\begin{rmk}
The $k$-algebra $\cO(G)$ of regular functions on $G$ is a \emph{Hopf algebra} with comultiplication $m^* : \cO(G) \ra \cO(G) \otimes \cO(G)$, coinversion $i^*: \cO(G) \ra \cO(G)$ and counit $e^*: \cO(G) \ra k$ and dualised commutative diagrams. In fact, there is a one-one correspondence between finitely generated Hopf algebras over $k$ and affine algebraic groups over $k$  \cite[II Theorem 5.1]{milneAGS}.
\end{rmk}

As the following example demonstrates, many familiar groups are affine algebraic groups.

\begin{ex}\
\begin{enumerate}
\item The \emph{additive group} $\GG_a = \spec k[t]$ over $k$ is the algebraic group whose underlying scheme is the affine line $\AA^1$ over $k$ and whose group operation is given by addition:
\[ m^*(t) = t \otimes 1 + 1 \otimes t \quad \mathrm{and} \quad i^*(t) = - t .\]
For a $k$-algebra $R$, we have $\GG_a(R)=(R, +)$.
\item The \emph{multiplicative group} $\GG_m = \spec k[t,t^{-1}]$ over $k$ is the algebraic group whose underlying variety is the $\AA^1 - \{0\}$ and whose group operation is given by multiplication:
\[ m^*(t) = t \otimes t \quad \mathrm{and} \quad i^*(t) = t^{-1} .\] For a $k$-algebra $R$, we have $\GG_m(R)=(R^\times, \cdot)$.
\item The \emph{general linear group} $\GL_n$ over $k$ is an open subvariety of $\AA^{n^2}$ cut out by the non-vanishing of the determinant. It is an affine variety with coordinate ring  $k[x_{ij} : 1 \leq i,j \leq n]_{\det(x_{ij})}$. The co-group operations are defined by:
\[ m^*(x_{ij}) = \sum_{k=1}^n x_{ik} \otimes x_{kj} \quad \mathrm{and} \quad i^*(x_{ij}) =(x_{ij})^{-1}_{ij} \]
where $(x_{ij})^{-1}_{ij}$ is the regular function on $\GL_n$ given by taking the $(i,j)$-th entry of the inverse of a matrix. 
\item For a finite group $G$, the group algebra $k[G]$ is a Hopf algebra and determines an affine algebraic group $\underline{G}_k:=\spec(k[G])$, whose $k$-points are identified with elements of $G$. 
\item For $n\geq 1$, the group of $n$th roots of unity is $\mu_n:=\spec k[t,t^{-1}]/(t^n-1)\subset \GG_m$. Write $I$ for the ideal $(t^n-1)$ of $R:=k[t,t^{-1}]$. Then 
\[m^*(t^n-1) = t^n\otimes t^n - 1\otimes 1 = (t^n-1)\otimes t^n + 1\otimes (t^n-1)\in I\otimes R+ R\otimes I\]
which implies that $\mu_n$ is an algebraic subgroup of $\GG_m$. If $n$ is different from $\mathrm{char}(k)$, the polynomial $X^n-1$ is separable and there are $n$ distinct roots in $k$. Then the choice of a primitive $n$th root of unity in $k$ determines an isomorphism $\mu_n\simeq \underline{\ZZ/n\ZZ}_{\: k}$. However, if $n=\mathrm{char}(k)$, then $X^n-1=(X-1)^n$ in $k[X]$, which implies that the scheme $\mu_n$ is non-reduced (with $1$ as the only closed point).
\end{enumerate}
\end{ex}

A \emph{linear algebraic group} is a closed subgroup of $\GL_n$; hence, any linear algebraic group is an affine algebraic group. The converse statement is also true: any affine algebraic group is a linear algebraic group (see Remark \ref{rmk aff alg gp is lag}).

\begin{defn}
An (algebraic) \emph{action} of an affine algebraic group $G$ on a scheme $X$ is a morphism of schemes $\sigma : G \times X \rightarrow X$ such that the following diagrams commute
\[ \xymatrix@C=2.5em{ \spec k \times X \ar[r]^{\quad e \times \text{id}_X} \ar[rd]_{\cong} & G \times X \ar[d]^{\sigma} & 
& G \times G \times X \ar[r]^{\quad \text{id}_G \times \sigma} \ar[d]_{m_G \times \text{id}_X} & G \times X \ar[d]^{\sigma} \\
& X & & G \times X \ar[r]_{\sigma} & X.  }
\] 
A subscheme $Z \subset X$ is called \emph{$G$-invariant} if it is preserved by the action; that is, $\sigma( G \times Z) \subset Z$.

A morphism $f : X \ra Y$ between schemes with actions $\sigma_X: G \times X \ra X$ and $\sigma_Y : G \times Y \ra Y$ is \emph{$G$-equivariant} if the following diagram commutes
\[ \xymatrix@C=2.5em{ G \times X \ar[r]^{\text{id}_G \times f} \ar[d]_{\sigma_X} & G \times Y \ar[d]_{\sigma_Y} \\ X \ar[r]_{f} & Y. }
\]  
If $Y$ is given the trivial action $\sigma_Y =\pi_Y : G \times Y \ra Y$, then we say $f : X \ra Y$ is \emph{$G$-invariant}.
\end{defn}

\begin{rmk}
For an action $\sigma: G \times X \ra X$, there is an induced action of $G$ on the ring of regular functions $\cO(X)$ given by
\[ (g \cdot f)(x) = f(g^{-1} \cdot x)\]
for $g \in G$, $f \in \cO(X)$ and $x \in X$.
\end{rmk}

By the following lemma, any action of an affine algebraic group $G$ on an affine scheme $X$ gives a $G$-action on the $k$-algebra $\cO(X)$ which is \emph{rational}; that is, every $f \in \cO(X)$ is contained in a finite dimensional $G$-invariant linear subspace of $\cO(X)$.

\begin{lemma}\label{fdim lemma}
For an affine algebraic group $G$ acting on an affine scheme $X$, any finite dimensional vector subspace of $\cO(X)$ is contained in a finite dimensional $G$-invariant vector subspace.
\end{lemma}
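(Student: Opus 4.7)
The plan is to use the coaction map corresponding to the action and to reduce the statement to the case of a single function. Since both $G$ and $X$ are affine, the action $\sigma : G \times X \to X$ dualises to a $k$-algebra homomorphism $\sigma^* : \cO(X) \to \cO(G) \otimes \cO(X)$. My first step is a routine reduction: if every element $f \in \cO(X)$ is contained in some finite-dimensional $G$-invariant subspace $W_f$, then for any finite-dimensional subspace $V \subset \cO(X)$ with basis $v_1, \dots, v_m$, the sum $W_{v_1} + \cdots + W_{v_m}$ is again finite-dimensional, $G$-invariant, and contains $V$. So it suffices to handle the case $V = k \cdot f$.

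Fix $f \in \cO(X)$ and write $\sigma^*(f) = \sum_{i=1}^n a_i \otimes f_i$ for some $a_i \in \cO(G)$ and $f_i \in \cO(X)$. The key computation is to connect this decomposition to the induced $G$-action from the preceding remark. By definition of $\sigma^*$, evaluating at $(g,x) \in G(k) \times X(k)$ gives $f(g \cdot x) = \sum_{i=1}^n a_i(g)\, f_i(x)$, and substituting $g \mapsto g^{-1}$ yields
\[ g \cdot f \;=\; \sum_{i=1}^n a_i(g^{-1})\, f_i \qquad \text{in } \cO(X), \]
for every $g \in G(k)$. Hence every $G$-translate of $f$ lies in the finite-dimensional subspace $\mathrm{span}_k(f_1, \dots, f_n)$.

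Now set $W_f := \mathrm{span}_k\{g \cdot f : g \in G(k)\}$. By construction $W_f$ contains $f = e \cdot f$ and is $G$-invariant, since $h \cdot (g \cdot f) = (hg) \cdot f \in W_f$ for all $h \in G(k)$. By the previous step $W_f \subset \mathrm{span}_k(f_1, \dots, f_n)$, so $\dim_k W_f \leq n$. Combining this with the reduction above finishes the proof.

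The only nontrivial point is the identification of the group-theoretic action $(g \cdot f)(x) = f(g^{-1} x)$ with what the Hopf-algebraic coaction $\sigma^*$ computes; once that bridge is crossed, the rest is a short linear algebra argument. I do not expect any serious obstacle, beyond bookkeeping with inverses and the fact that since $k$ is algebraically closed, spanning by $G(k)$-translates is enough to witness $G$-invariance.
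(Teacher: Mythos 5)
Your proof is correct and follows essentially the same route as the paper: write the coaction $\sigma^*(f) = \sum_i a_i \otimes f_i$ and use $g \cdot f = \sum_i a_i(g^{-1}) f_i$ to trap all translates inside a finite-dimensional subspace. Your variant of taking $W_f = \mathrm{span}_k\{g \cdot f\}$ (manifestly $G$-invariant, and contained in $\mathrm{span}_k(f_1,\dots,f_n)$) is if anything slightly cleaner than the paper's direct claim that the span of the $f_i$ is itself invariant.
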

\begin{proof}
Let $\sigma^* : \cO(X) \ra \cO(G) \otimes \cO(X)$ denote the coaction. Let $W = \mathrm{Span}_k(f_1,\dots,f_n) \subset \cO(X)$ and write $\sigma^*(f_i) = \sum_{j=1}^{n_i} h_{ij} \otimes f_{ij}$ with $h_{ij} \in \cO(G)$ and $g_{ij} \in \cO(X)$. The vector space spanned by $f_{ij}$ is a $G$-invariant finite-dimensional subspace containing $W$, as $ g \cdot f_i = \sum_j h_{ij}(g) f_{ij}$.
\end{proof}

\begin{rmk}\label{rmk aff alg gp is lag}
By applying this to the action of $G$ on itself by left multiplication, if we let $W$ be a vector space spanned by a finite choice of algebra generators for $\cO(G)$, then $W$ is contained in a finite dimensional $G$-invariant vector subspace $V \subset \cO(X)$. One can prove that there is an embedding $G \ra \GL(V)$ to show any affine algebraic group over $k$ is a linear algebraic group.
\end{rmk}

We can define orbits and stabilisers in this setting; the latter has a scheme structure by definition, and we shall soon see the former can also be equipped with a scheme structure.

\begin{defn}[Orbits and stabilisers]
For an action $\sigma: G \times X \ra X$ of an affine algebraic group $G$ on a scheme $X$ and for a $k$-point $x \in X$, we define
\begin{enumerate}
\item the \emph{orbit} $G \cdot x$ of $x$ to be the (set-theoretic) image of $\sigma_x = \sigma(-,x) : G(k) \ra X(k)$ given by $g \mapsto g \cdot x$;
\item the \emph{stabiliser} $G_x$ of $x$ to be the fibre product of $\sigma_x : G \ra X$ and $x: \spec k \ra X$. 
\end{enumerate}
\end{defn}

The stabiliser $G_x$ of $x$ is a closed subscheme of $G$ (as it is the preimage of a closed subscheme of $X$ under $\sigma_x : G \ra X$) and a subgroup of $G$. By Chevalley's Theorem \cite[II Exercise 3.19]{hartshorne}, as the image of a morphism of schemes, the orbit is a priori only a constructible subset of $X$. However, we claim it is a locally closed subset and so can be equipped with the structure of a reduced locally closed subscheme of $X$. Indeed, $G\cdot x$ is open in its closure: since the orbit is constructible, there is a dense open subset $U$ of $\overline{G\cdot x}$ with $U\subset G\cdot x$ and, as $G$ acts transitively on the orbit, every point in the orbit is contained in a $G$-translate of $U$. 

The boundary of an orbit is a union of orbits of strictly smaller dimension, and so in particular every orbit closure contains a closed orbit (of minimal dimension). There is also an orbit stabiliser theorem:
\[ \dim G = \dim G_x + \dim G \cdot x\]
as $\sigma_x:G\rightarrow G\cdot x$ is flat (by transitivity of the $G$-action, we can deduce this from generic flatness) and so we can apply the dimension formula for fibres of a flat morphism \cite[Proposition III.9.5]{hartshorne}.

In general, for an action $\sigma : G \times X \ra X$, the set of orbits $X/G$ is not a scheme. Instead, we ask for a universal quotient in the category of schemes.

\begin{defn}[Categorical quotient]
For an action of an affine algebraic group $G$ on scheme $X$, a \emph{categorical quotient} is a $G$-invariant morphism $\varphi : X \ra Y$ of schemes which is universal (that is, every other $G$-invariant morphism $f : X \ra Z$ factors uniquely through $\varphi$ so that there exists a unique morphism $h : Y \ra Z$ such that $f = h \circ \varphi$). If the preimage of each $k$-point in $Y$ is a single orbit, then we say $\varphi$ is an \emph{orbit space}.
\end{defn}

If a categorical quotient exists, then it is unique up to unique isomorphism. In cases where a categorical quotient does not exist, one may want to enlarge the category of schemes to algebraic spaces or even algebraic stacks.

A categorical quotient is constant on orbits and orbit closures. Hence, a categorical quotient is an orbit space only if the action of $G$ on $X$ is closed; that is, all the orbits $G \cdot x$ are closed.

\begin{ex}\
\begin{enumerate}
\item For $\GG_m$ acting on $\AA^n$ by scalar multiplication $t \cdot (a_1, \dots, a_n) = (t a_1, \dots, t a_n)$, there are two types of orbits:
\begin{itemize}
\item punctured lines through the origin,
\item the origin (closed of dimension $0$)
\end{itemize}
Every orbit contains the origin in its closure. As any $\GG_m$-invariant function on $\AA^n$ is constant on orbits and their closures, it must be constant and so factors via the structure map $\pi : \AA^n \ra \Spec k$. Hence, the structure map is a categorical quotient.
\item For the action of $\GG_m$ on $\AA^{2}$ by $t \cdot (x,y) = (tx, t^{-1}y)$, the orbits are
\begin{itemize}
\item conics $\{(x,y): xy = \alpha\}$ for $\alpha \in \AA^1 \setminus \{0\}$ (closed of dimension $1$),
\item the punctured $x$-axis,
\item the punctured $y$-axis,
\item the origin (closed of dimension $0$).
\end{itemize}
The punctured axes both contain the origin in their orbit closures. We will see that the categorical quotient for this action is $\AA^2 \ra \AA^1$ given by $(x,y) \mapsto xy$.
\end{enumerate}
\end{ex}

We see the sort of problems that may occur when we have non-closed orbits. In the first example, our geometric intuition tells us that we would ideally like to remove the origin and then take the quotient of $\GG_m$ acting on $\AA^n \setminus \{ 0 \}$ to obtain the projective space $\PP^{n-1} = (\AA^n \setminus \{0\})/ \GG_m$, which is an orbit space for this action. We will return to this example and see that by introducing a non-trivial notion of GIT semistability, we can remove the origin (see Example \ref{ex Pn as twisted affine quotient}).

There is the following stronger notion of quotient that arises in GIT \cite[Definition 1.5]{seshadri_quotients}.

\begin{defn}[Good quotient]\label{good}
A morphism $\varphi : X \ra Y$ is a \emph{good quotient} for an action of $G$ on $X$ if
\begin{enumerate}
\renewcommand{\labelenumi}{\roman{enumi})}
\item $\varphi$ is $G$-invariant, surjective and affine;
\item The map $\cO_Y \ra \varphi_* \cO_X^G$ is an isomorphism.
\item If $W_1$ and $W_2$ are disjoint $G$-invariant closed subschemes, then $\varphi(W_1) $ and $\varphi(W_2)$ are disjoint closed subschemes.
\end{enumerate}
If moreover the preimage of each point is a single orbit, then we say $\varphi$ is a \emph{geometric quotient.}
\end{defn}

\begin{rmk}\label{rmk conseq good}\
\begin{enumerate}
\item The definition of a good quotient is local in the target, which enables the construction of good quotients via gluing. 
\item The last two conditions imply that $\varphi$ is surjective: the second property shows that $\varphi$ is dominant (i.e.\ the image of $\varphi$ is dense in $Y$) and the third condition shows that the image of $\varphi$ is closed. Furthermore it implies that for each $y \in Y$, the preimage $\varphi^{-1}(y)$ contains a unique closed orbit, whose stabiliser is reductive (see Definition \ref{def red} and \cite{luna}, as the quotient of the reductive group $G$ by the subgroup $G_x$ is affine if and only if $G_x$ is reductive). In particular, if all orbits are closed, then $\varphi$ is a geometric quotient.

\item The third condition also enables us to determine when two orbit closures meet: we have $\overline{G \cdot x_1} \cap \overline{G \cdot x_2} \neq \phi$ if and only if $\varphi(x_1) = \varphi(x_2)$.
\item Any good quotient is a categorical quotient; see \cite[Proposition 3.11]{newstead}.
\end{enumerate}
\end{rmk}

Let us relate the construction of moduli spaces with categorical quotients. For a moduli problem $\cM$, a family $\cF$ over a scheme $S$ has the \emph{local universal property} if for any other family $\cG$ over a scheme $T$ and for any $k$-point $t \in T$, there exists a neighbourhood $U$ of $t$ in $T$ and a morphism $f: U \ra S$ such that $\cG|_U \sim_U f^*\cF$. 
 
\begin{prop}\label{prop moduli cat quot} \emph{\cite[Proposition 2.13]{newstead}}
Let $\cM$ be a moduli problem for which there exists a family $\cF$ over $S$ with the local universal property. Suppose that there is an algebraic group $G$ acting on $S$ such that two $k$-points $s,t$ lie in the same $G$-orbit if and only if $\cF_t \sim \cF_s$. Then
\begin{enumerate}
\item any coarse moduli space is a categorical quotient of the $G$-action on $S$;
\item a categorical quotient of the $G$-action on $S$ is a coarse moduli space if and only if it is an orbit space.
\end{enumerate}
\end{prop}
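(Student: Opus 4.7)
The plan is to translate the local universal property of $\cF$ into the existence and uniqueness of factorisations through $M$, using the hypothesis that $G$-orbits in $S$ correspond exactly to equivalence classes of fibres of $\cF$.

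For (1), let $(M,\eta)$ be a coarse moduli space and set $\varphi := \eta_S(\cF)$. I would first show $\varphi$ is $G$-invariant. The two pullbacks $\sigma^*\cF$ and $p_2^*\cF$ over $G \times S$ (where $\sigma$ is the action and $p_2$ the projection) have equivalent fibres at every $k$-point by the orbit hypothesis; using the local universal property and a gluing argument one concludes $\sigma^*\cF \sim_{G \times S} p_2^*\cF$, and naturality of $\eta$ then gives $\varphi \circ \sigma = \varphi \circ p_2$. For universality, given any $G$-invariant morphism $\psi: S \ra N$, I would construct a natural transformation $\nu: \cM \ra h_N$ as follows: for a family $\cG$ over $T$, use the local universal property to choose an open cover $\{U_i\}$ of $T$ and morphisms $f_i: U_i \ra S$ with $\cG|_{U_i} \sim_{U_i} f_i^*\cF$, and glue the morphisms $\psi \circ f_i: U_i \ra N$. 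The verifications of independence of choice and agreement on overlaps reduce to the observation that if $f^*\cF \sim_U g^*\cF$ for two morphisms $f,g: U \ra S$, then $f(u)$ and $g(u)$ lie in the same $G$-orbit at every $k$-point $u$, which forces $\psi \circ f = \psi \circ g$ since $\psi$ is $G$-invariant. Universality of $\eta$ applied to $\nu$ then produces a unique $h: M \ra N$ with $\nu = h_* \circ \eta$; evaluating on $\cF$ yields $\psi = \nu_S(\cF) = h \circ \varphi$, so $\varphi$ is the universal $G$-invariant morphism out of $S$.

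For (2), suppose $\varphi: S \ra M$ is a categorical quotient. The same gluing recipe (now using that $\varphi$ itself is $G$-invariant to patch $\varphi \circ f_i$ on overlaps) defines a natural transformation $\eta: \cM \ra h_M$. Universality is dual to the argument of (1): any $\nu: \cM \ra h_N$ produces a $G$-invariant morphism $\nu_S(\cF): S \ra N$, which factors uniquely through $\varphi$ by the categorical-quotient property, and the factorisation gives the required $h: M \ra N$ with $\nu = h_* \circ \eta$. Under the natural identification $\cM(\spec k) = \cA/{\sim}\, \cong\, S(k)/G$ provided by the orbit hypothesis, $\eta_{\spec k}$ is precisely the map $S(k)/G \ra M(k)$ induced by $\varphi$, so $\eta_{\spec k}$ is a bijection exactly when $\varphi$ is an orbit space.

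The main technical obstacle throughout is the well-definedness of the gluing constructions: one must check that different trivialising covers and choices of $f_i$ on overlaps produce the same morphism to the target. The key input is that if $f, g: U \ra S$ satisfy $f^*\cF \sim_U g^*\cF$ then $\varphi \circ f = \varphi \circ g$ as scheme morphisms; this follows at $k$-points from the orbit hypothesis together with $G$-invariance of $\varphi$, and can be promoted to equality of morphisms since agreement at $k$-points suffices on reduced schemes of finite type over the algebraically closed field $k$.
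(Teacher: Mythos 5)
Your proposal follows essentially the same route as the paper's proof: both rest on the correspondence between natural transformations $\cM \ra \Hom(-,M)$ and $G$-invariant morphisms $S \ra M$, given in one direction by $\eta \mapsto \eta_S(\cF)$ and in the other by gluing the compositions $f \circ h_i$ over a cover trivialising a family via the local universal property, with well-definedness on overlaps coming from the orbit hypothesis. Both parts of the statement are then read off from this correspondence exactly as you describe, including the identification $\cM(\spec k) \cong S(k)/G$ in part (2).

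The one step that does not work as written is your justification of the $G$-invariance of $\varphi = \eta_S(\cF)$ in part (1). You assert that, since $\sigma^*\cF$ and $p_2^*\cF$ have equivalent fibres at every $k$-point, the local universal property and a gluing argument yield $\sigma^*\cF \sim_{G\times S} p_2^*\cF$. No such implication holds: fibrewise equivalence of two families does not imply their equivalence (the paper explicitly flags the existence of non-trivial families that are fibrewise trivial as an obstruction to coarse moduli spaces), and the local universal property only says that every family is locally a pullback of $\cF$, not that two families with pointwise equivalent fibres are equivalent. Fortunately the family-level equivalence is not needed: applying naturality of $\eta$ to a $k$-point $(g,s): \spec k \ra G \times S$ gives $(\varphi\circ\sigma)(g,s) = \eta_{\spec k}(\cF_{g\cdot s}) = \eta_{\spec k}(\cF_s) = (\varphi\circ p_2)(g,s)$, and this pointwise agreement is promoted to equality of morphisms exactly as in the final paragraph of your proposal (which is also the level of detail at which the paper treats this point). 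With that substitution the argument is complete and coincides with the paper's.
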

\begin{proof}
For any scheme $M$, we claim that there is a bijective correspondence
\[\{\text{natural transformations } \eta: \cM \ra \Hom(-,M)\}  \longleftrightarrow \{G\text{-invariant morphisms } f: S \ra M\}\]
given by $\eta \mapsto \eta_S(\cF)$, which is $G$-invariant by our assumptions about the $G$-action on $S$. Conversely, given a $G$-invariant morphism $f: S \ra M$, we define $\eta: \cM \ra \Hom(-,M)$ to associate to a family $\cG$ over $T$, a morphism $\eta_T(\cG) : T \ra M$ glued together locally by using the local universal property of $\cF$ over $S$. More precisely, we can cover $T$ by open subsets $U_i$ such that there is a morphism $h_i : U_i \ra S$ and $h_i^* \cF \sim_{U_i} \cG|_{U_i}$. For $u \in U_i \cap U_j$, we have 
\[\cF_{h_i(u)} \sim (h_i^*\cF)_{u} \sim \cG_u \sim (h_j^*\cF)_{u} \sim \cF_{h_j(u)}\]
and so by assumption $h_i(u)$ and $h_j(u)$ lie in the same $G$-orbit. Since $f$ is $G$-invariant, the compositions $f \circ h_i : U_i \ra M$ glue to a morphism $\eta_T(\cG) : T \ra M$. 

Hence, if $(M,\eta: \cM \ra h_M)$ is a coarse moduli space, then $\eta_S(\cF) : S \ra M$ is $G$-invariant and the universal $G$-invariant morphism from $S$, which proves statement a). Furthermore, the $G$-invariant morphism $\eta_S(\cF) : S \ra M$ is an orbit space if and only if $\eta_{\spec k}$ is bijective, which proves statement b).
\end{proof}

\section{Mumford's reductive geometric invariant theory}\label{sec GIT}

The origins of GIT go back to 19th century invariant theory and a question of Hilbert on the finite generation of invariant rings. We begin with Hilbert's 14th problem in $\S$\ref{sec Hilb14} and describe other techniques for constructing quotients in $\S$\ref{sec other methods const quotients}. We give various definitions and examples related to reductive and unipotent groups in $\S$\ref{sec reductive}, and prove finite generation results for invariant rings in $\S$\ref{sec fg inv}. We describe Mumford's GIT \cite{mumford} for affine schemes in $\S$\ref{sec affine GIT}, for projective schemes in $\S$\ref{sec proj GIT} and in general in $\S$\ref{sec general GIT}, and state a twisted affine version in $\S$\ref{sec affine GIT twisted char}. Beyond Mumford's book \cite{mumford}, the notes of Newstead \cite{newstead} and Thomas \cite{thomas} provide excellent introductions to GIT.

\subsection{Hilbert's 14th Problem}\label{sec Hilb14}

Consider an action $\sigma : G \times X \ra X$ of an affine algebraic group on an affine scheme. The coaction determines a linear representation $G \ra \GL(\cO(X))$. Any $G$-invariant morphism $\phi : X \ra Z$ induces a homomorphism $\phi^* : \cO(Z) \ra \cO(X)$ whose image is contained in the ring of $G$-invariant functions
\[ \cO(X)^G := \{ f \in \cO(X) : g \cdot f = f \: \text{ for all } g \in G \}.\] 
Consequently one can ask if the inclusion of the ring of $G$-invariant functions corresponds to a morphism of affine schemes. 

\begin{question}[Hilbert's 14th Problem] Is $\cO(X)^G$ a finitely generated $k$-algebra?
\end{question}

Hilbert showed that for the general linear group over the complex numbers, the answer was yes. However, in general $\cO(X)^G$ is not finitely generated, due to a counterexample of Nagata constructed using an action of a product of additive groups \cite{nagata59,nagata}; see \cite{Freudenburg_Hilbert14} for a survey of counterexamples to Hilbert's 14th problem. Fortunately, Nagata also showed that the answer is yes for a large number of groups, namely reductive groups and in this case Mumford showed that taking the spectrum of the inclusion $\cO(X)^G \subset \cO(X)$ gives a categorical quotient of the action. For non-reductive groups, we will see in $\S$\ref{sec additive bad} that even when $\cO(X)^G$ is finitely generated, taking the spectrum of the inclusion of invariants does not always yield a categorical quotient.

\subsection{Constructions of quotients by affine algebraic groups}\label{sec other methods const quotients}

Before turning to Mumford's GIT for reductive groups, let us give a brief summary of some important results about the construction of quotients of affine algebraic groups in general.

For a free action of an affine algebraic group on a scheme, there is a geometric quotient in the category of \emph{algebraic spaces} by results of Artin \cite{Artin} and Koll\'{a}r \cite{Kollar}. For a finite group, it is easy to see that a free action induces an \'{e}tale equivalence relation and any quotient of a scheme by an \'{e}tale equivalence relation is an algebraic space. There are examples of free actions whose geometric quotient is not a scheme: an example of Hironaka gives an action of a finite group whose geometric quotient is not a scheme (see \cite[Appendix B, Example 3.4.1]{hartshorne}) 
and an example of Derksen gives an action of the additive group $\GG_a$ whose geometric quotient is not a scheme (see \cite[Example 18]{DerksenFreeGa}),

Rosenlicht \cite{Rosenlicht} showed that for a connected affine algebraic group  $G$ acting on an irreducible variety $X$, there is a dense open subset which admits a geometric quotient. Unfortunately, this set is non-explicit, as his proof involves showing that the field $k(X)^G$ of invariant \emph{rational} functions is finitely generated (see \cite[$\S$19 Appendix Lemma 1]{GrosshansBook}).

\begin{rmk}[Transfer Principle]\label{rmk transfer principle}
Let $H < G$ be a closed subgroup of an affine algebraic group $G$, then $H$ acts on $G$ by left multiplication and this action has a geometric quotient $G/H$ and the quotient map $G \ra G/H$ is \'{e}tale locally trivial (see \cite[p181]{borel} and \cite[Theorem 1.16]{brion}).

For $H < G$ as above, suppose there is an action of $H$ on $X$; then for the diagonal action of $H$ on $G \times X$ by $h \cdot (g,x) = (gh^{-1},hz)$, there is a geometric quotient $G \times^HX$; for details on this construction, see \cite[III $\S$4]{DemazureGabriel}. The action of $G$ on itself by left multiplication induces a $G$-action on $G \times^H X$ such that there is a bijective correspondence between $H$-orbits in $X$ and $G$-orbits in $G \times^H X$. A scheme is a geometric $H$-quotient of $X$ if and only if it is a geometric $G$-quotient of $G \times^HX$. Furthermore, if $X$ is affine and the $H$-action on $X$ extends to $G$, then we have the following transfer principle (due to Roberts, see \cite[$\S$9]{GrosshansBook})
\[ \cO(X)^H \cong (\cO(X) \otimes \cO(G/H))^G.\]
Note $G/H$ may not be affine, so $ \cO(G/H)$ is not necessarily finitely generated (see Remark \ref{rmk Grosshans subgp}).
\end{rmk}

\subsection{Reductive groups}\label{sec reductive} Let us fix some definitions for unipotent and reductive groups; we note that there are alternative equivalent formulations (for example, see \cite{conrad,milneAGS}).

\begin{defn}[Unipotent and reductive groups]\label{def red} An affine algebraic $k$-group $G$ is 
\begin{enumerate}[label={\roman*)}]
\item \emph{unipotent} if it is isomorphic to a subgroup of a standard unipotent group $\UU_n \subset \GL_n$ consisting of upper triangular matrices with diagonal entries equal to 1.
\item \emph{reductive} if it is smooth and every connected unipotent normal subgroup is trivial (this second condition is often phrased as asking for the unipotent radical to be trivial).
\item \emph{geometrically reductive} if for every finite dimensional linear representation $\rho : G \ra \GL(V)$ and every non-zero $G$-invariant point $v \in V$, there is a non-constant $G$-invariant homogeneous polynomial $f \in \cO(V)$ such that $f(v) \neq 0$.
\item \emph{linearly reductive} if for every finite dimensional linear representation $\rho : G \ra \GL(V)$ and every non-zero $G$-invariant point $v \in V$, there is a non-constant $G$-invariant linear polynomial $f \in \cO(V)$ such that $f(v) \neq 0$.
\end{enumerate}
\end{defn}

\begin{rmk}\
\begin{enumerate}
\item $G$ is unipotent if and only if every finite dimensional linear representation $\rho : G \ra \GL(V)$ has a non-zero fixed point.
\item G is linearly reductive if and only if every finite dimensional linear representation $\rho : G \ra \GL(V)$ is completely reducible (that is, $\rho$ decomposes as a direct sum of irreducible representations) or equivalenty if taking $G$-invariants on finite dimensional linear $G$-representations is exact.

\end{enumerate}

\end{rmk}

\begin{ex}\
\begin{enumerate}
\item The additive group $\GG_a$ is unipotent, as we have an embedding $\GG_a \hookrightarrow \UU_2$ given by
\[ c \mapsto \left( \begin{array}{cc} 1 & c \\ 0 & 1\end{array} \right).\]
\item In characteristic $p$, there is a finite subgroup $\alpha_p \subset \GG_a$ where we define the functor of points of $\alpha_p$ by associating to a $k$-algebra $R$, 
\[ \alpha_p(R):= \{ c \in \GG_a(R) : c^p=0 \}. \]
This is represented by the scheme $\spec k[t]/(t^p)$ and so $\alpha_p$ is a unipotent group which is not smooth.
\item The multiplicative group $\GG_m$ or any algebraic torus $T=\GG_m^r$ is linearly reductive: as any linear representation $\rho : T \ra \GL(V)$ admits a weight decomposition
\[ V = \bigoplus_{\chi \in X^*(T)}  V_\chi \quad \text{ where } \: V_{\chi} = \{ v \in V : t \cdot v = \chi(t)v \: \text{ for all } \: t \in T \}.\]
\end{enumerate}
\end{ex}

\begin{exer}
Prove that any finite group of order not divisible by the characteristic of $k$ is linearly reductive. 
(Hint: consider averaging over the group.)
\end{exer}

For smooth affine algebraic group schemes over $k$, we have
\[ \text{linearly reductive} \implies \text{geometrically reductive} \iff \text{reductive} \]
and all three notions coincide in characteristic zero. The first implication is immediate from the definitions and, in characteristic zero, the opposite implication goes back to Weyl and uses the representation theory of compact Lie groups (this is known as Weyl's unitary trick). The equivalence between reductive and geometrically reductive for smooth affine group schemes was conjectured by Mumford after Nagata proved that every geometrically reductive group is reductive \cite{nagata}; the opposite implication was proved by Haboush \cite{haboush}.

Let us state an important property of geometrically reductive group actions.

\begin{lemma}[Geometrically reductive group actions separate closed orbits, {\cite[Lemma 3.3]{newstead}}] \label{geom red lem}
Let $G$ be a geometrically reductive group acting on an affine scheme $X$. If $W_1$ and $W_2$ are disjoint $G$-invariant closed subsets of $X$, then there is an invariant function $f \in \cO(X)^G$ which separates these sets, i.e.
\[ f(W_1) = 0 \quad \mathrm{and} \quad f(W_2) = 1. \]
\end{lemma}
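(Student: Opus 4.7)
The plan is to first reduce to a finite-dimensional problem using Lemma~\ref{fdim lemma}, and then leverage the definition of geometric reductivity via a dual construction. Since $X$ is affine and $W_1, W_2$ are disjoint closed subsets, one has $I(W_1) + I(W_2) = \cO(X)$, so the Chinese Remainder Theorem produces some $h \in \cO(X)$ with $h|_{W_1} = 0$ and $h|_{W_2} = 1$. The challenge is to upgrade this non-invariant separating function to a $G$-invariant one.

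First, I would apply Lemma~\ref{fdim lemma} to place $h$ inside a finite-dimensional $G$-invariant subspace $V \subset \cO(X)$. Inside $V$, consider the subspace $U \subset V$ of elements $u$ satisfying $u|_{W_1} = 0$ and $u|_{W_2}$ constant (i.e.\ lying in $k \cdot 1$). Because $W_1$ and $W_2$ are $G$-invariant and the constants are fixed, $U$ is a $G$-subrepresentation of $V$, and $h \in U$. The linear functional $\phi \in U^*$ sending $u$ to its constant value on $W_2$ is nonzero (since $\phi(h)=1$) and $G$-invariant: for $g \in G$ and $w \in W_2$ we have $(g \cdot \phi)(u) = u(g \cdot w) = \phi(u)$ because $g \cdot w \in W_2$ and $u|_{W_2}$ is constant.

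Next, I would apply geometric reductivity to the finite-dimensional representation $U^*$ and the nonzero invariant $\phi$, obtaining a non-constant homogeneous $G$-invariant polynomial on $U^*$, i.e.\ an invariant element $F \in \Sym^d U = \cO(U^*)$ with $d \geq 1$ and $F(\phi) \neq 0$. The multiplication map $\Sym^d V \to \cO(X)$ induced by $V \subset \cO(X)$ is $G$-equivariant, so the image $f \in \cO(X)$ of $F$ is $G$-invariant. A direct calculation gives $f(x) = F(\mathrm{ev}_x|_U)$: for $x \in W_1$, every $u \in U$ vanishes at $x$, hence $f(x) = F(0) = 0$; for $x \in W_2$, $\mathrm{ev}_x|_U = \phi$, hence $f(x) = F(\phi)$, the same nonzero constant on all of $W_2$. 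Rescaling by this constant produces the required $f$.

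The main obstacle is precisely this jump from a non-invariant separating function $h$ to an invariant one. Geometric reductivity produces invariant \emph{polynomials} rather than invariant linear forms, so one cannot simply ``average'' $h$; the key trick is to repackage the property ``restricts to a constant on $W_2$'' as an invariant vector $\phi$ in a dual representation, feed it into geometric reductivity to extract an invariant polynomial $F$, and then reinterpret $F$ as a product in $\cO(X)$. The resulting invariant $f$ acquires extra degree $d$, but the vanishing on $W_1$ and non-vanishing on $W_2$ transfer cleanly from $F$ to $f$.
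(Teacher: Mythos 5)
Your proof is correct and follows essentially the same route as the standard argument in Newstead's notes that the paper cites for this lemma (the paper itself gives no proof): separate $W_1$ and $W_2$ by some $h$ using comaximality of their ideals, place $h$ in a finite-dimensional $G$-stable subspace via Lemma~\ref{fdim lemma}, observe that the relevant subrepresentation consists of functions vanishing on $W_1$ and constant on $W_2$, and feed the resulting invariant vector in the dual into the definition of geometric reductivity. The only cosmetic slip is writing $\Sym^d V$ where you mean $\Sym^d U$, and one should dispose of the trivial case $W_2=\emptyset$ separately since $\phi$ is not defined there.
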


\subsection{Finitely generated rings of invariants}\label{sec fg inv}

Recall that for an action of an affine algebraic group $G$ on an affine scheme $X$, the associated action on the coordinate ring $\cO(X)$ is rational. 

\begin{thm}[Nagata, \cite{nagata}] \label{thm Nagata fgen}
Let $G$ be a geometrically reductive group acting rationally on a finitely generated $k$-algebra $A$. Then the $G$-invariant subalgebra $A^G$ is finitely generated.
\end{thm}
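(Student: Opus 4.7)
The plan is to reduce to the case of a graded polynomial $k$-algebra carrying a linear $G$-action, and then follow Hilbert's classical proof of finite generation, using Lemma \ref{geom red lem} to replace the Reynolds operator that is available for linearly reductive groups but not in general.

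First, by Lemma \ref{fdim lemma} I would choose a finite-dimensional $G$-invariant subspace $V \subset A$ containing a finite set of $k$-algebra generators of $A$. The induced $G$-equivariant surjection $\pi : R := \Sym(V) \twoheadrightarrow A$ presents $A$ as a quotient of a graded polynomial algebra on which $G$ acts by graded automorphisms. If the theorem is known for $R$, finite generation of $A^G$ follows from the Artin--Tate lemma together with the fact (itself a consequence of geometric reductivity, established along the lines of the main step below) that every element of $A^G$ is integral over the image $\pi(R^G)$; for linearly reductive $G$ the stronger statement that $R^G \twoheadrightarrow A^G$ is surjective makes this reduction trivial.

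I thereby reduce to the case where $A = \bigoplus_{d \geq 0} A_d$ is a graded polynomial algebra with $G$-action preserving the grading and $A_0 = k$. Consider the ideal $J := A^G_+ \cdot A$ generated in $A$ by positive-degree invariants. Noetherianity of $A$ (by Hilbert's basis theorem) provides homogeneous $f_1, \ldots, f_n \in A^G_+$ whose images generate $J$, and I set $B := k[f_1, \ldots, f_n] \subset A^G$. The target is to show that $A^G$ is a finite $B$-module; from this, finite generation of $A^G$ as a $k$-algebra is immediate. Fix therefore a homogeneous $g \in A^G$ of positive degree and write $g = \sum a_i f_i$ with $a_i \in A$. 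In the linearly reductive case, applying the Reynolds operator yields $g = \sum \rho(a_i) f_i$ with $\rho(a_i) \in A^G$ of strictly smaller degree, and induction on degree concludes.

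The crux and principal obstacle is carrying out this last step when $G$ is only geometrically reductive, since then no Reynolds operator exists. In place of it, I would invoke Lemma \ref{geom red lem}: applied to the disjoint $G$-invariant closed subsets of $\Spec A$ cut out by the $f_i$ and by suitable constructions involving $g$, it furnishes homogeneous invariant polynomials whose expansion in terms of $g, f_1, \ldots, f_n$, after careful graded bookkeeping, yields an integral equation for some power $g^N$ over $B$. Converting these power identities into the genuine module-finiteness of $A^G$ over $B$ is the delicate combinatorial heart of Nagata's original proof and the step most sensitive to the precise form of geometric reductivity.
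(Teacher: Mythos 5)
Where your proposal overlaps with what the paper actually proves, it is essentially the same argument: the paper also reduces to a polynomial ring $\Sym^*(V)$ via Lemma \ref{fdim lemma}, builds a Reynolds operator from complete reducibility, uses it to show $A^G$ is Noetherian and that $\Sym^*(V)^G \twoheadrightarrow A^G$, and finishes with Hilbert's observation that ideal generators of $A^G_+$ generate $A^G$ as an algebra. Be aware, however, that the paper only writes out this linearly reductive case and defers the geometrically reductive statement of Theorem \ref{thm Nagata fgen} to Newstead; so the part of your proposal that would go beyond the paper is precisely the part you leave unproven.

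That unproven part is a genuine gap, and Lemma \ref{geom red lem} is not the right tool to close it in the form you invoke it. The separation lemma produces a single invariant distinguishing two disjoint closed $G$-invariant subsets; what Nagata's argument actually requires is the lifting lemma: for a $G$-stable ideal $I \subset A$ and $\bar a \in (A/I)^G$, some power of $\bar a$ (namely $\bar a^{p^r}$ in characteristic $p$, and $\bar a$ itself in characteristic $0$) lies in the image of $A^G \to (A/I)^G$. This is proved by applying the \emph{definition} of geometric reductivity to a specific finite-dimensional representation --- the dual of a finite-dimensional $G$-stable subspace $W \subset A$ containing a lift of $\bar a$, on which the class of $\bar a$ determines a nonzero $G$-fixed vector --- not by separating closed sets, and it is the engine behind both your claimed integrality of $A^G$ over $\pi(R^G)$ and the replacement of $g = \sum a_i f_i$ by an integral equation for a power of $g$ over $B$. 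Your appeal to the Artin--Tate lemma is also misapplied as stated: with $\pi(R^G) \subset A^G \subset A$ one has no finiteness of $A$ as an $A^G$-module, and integrality of $A^G$ over a finitely generated subring does not by itself yield module-finiteness (that would presuppose $A^G$ is a finitely generated algebra, which is what is being proved); in Nagata's proof the graded structure is used to convert the power identities into genuine finiteness. Until the lifting lemma and this conversion are actually carried out, the proposal establishes only the linearly reductive case.
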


We will outline the proof of this theorem in the significantly easier case when $G$ is linearly reductive; see \cite[Theorem 3.4]{newstead} for the full proof. In this case, one can construct a \emph{Reynolds operator}, which is a projection $R : A \twoheadrightarrow A^G$ onto the $G$-invariants that satisfies $R(ab) = aR(b)$ for all $a \in A^G$ and $b \in A$. If $G$ is finite, $R$ can be viewed as averaging over the group. Using the Reynolds operator, one can show that $A^G$ is Noetherian and then prove it is finitely generated. This approach is similar to Hilbert's proof that over the complex numbers the ring of invariants for $\GL_n$ is finitely generated.

\begin{proof}[Proof of Theorem \ref{thm Nagata fgen} (for linearly reductive groups)] Since $A$ is a finitely generated $k$-algebra, it has a countable basis as a $k$-vector space; thus $A$ can be written as an increasing union of finite dimensional vector spaces. By applying Lemma \ref{fdim lemma} to these vector spaces, we can write $A$ as an increasing union of finite dimensional $G$-invariant vector spaces $W_n$ over $n \in \NN$.

Our assumption that $G$ is linearly reductive implies that the finite dimensional $G$-representation $W_n$ is completely reducible. In particular, we can write $W_n$ as a sum of $G$-representations
\[ W_n = W_n^G \oplus W_n'\]
and obtain a projection $R_n : W_n \twoheadrightarrow W_n^G$, which together induce a projection $R : A \ra A^G$.

To show that this projection is a Reynolds operator, we need to show $R(ab) = aR(b)$ for all $a \in A^G$ and $b \in A$. For this take $n$, so $a,b \in W_n$ and pick $m \geq n$ such that left multiplication $l_a : A \ra A$ restricts to a homomorphism of $G$-representations 
\[l_a : W_n \ra W_m.\]
As above, we write $W_n = W_n^G \oplus W_n'$. Since $a \in A^G$, we have $l_a(W_n^G) \subset W_m^G$ and by Schur's Lemma, the image of each irreducible representation appearing in $W_n'$ is either zero or isomorphic to that irreducible representation, thus $l_a(W_n') \subset W_m'$. If we write $b = b^G + b' \in W_n^G \oplus W_n'$, then 
\[ ab = l_a(b) = l_a(b^G) + l_a(b')=ab^G + ab' \in W_m^G \oplus W_m'.\] 
Hence, $R(ab) = ab^G = a R(b)$ as required. 

For any ideal $I \subset A^G$, we have $I \subset IA \cap A^G$, and using the Reynolds operator, one can show the opposite inclusion. Hence $I = IA \cap A^G$ and from this we deduce $A^G$ is Noetherian: any increasing chain of ideals $I_n$ in $A^G$ must stabilise, as the corresponding chain of ideals $I_nA$ stabilises due to $A$ being Noetherian.

By choosing generators for the $k$-algebra $A$, we can realise it as a quotient of a polynomial ring with linear $G$-action $\Sym^*(V) \twoheadrightarrow A$. Since any $G$-equivariant homomorphism of algebras commutes with their Reynolds operators, we obtain a surjection $\Sym^*(V)^G \twoheadrightarrow A^G$ and so to show $A^G$ is finitely generated, it suffices to show $\Sym^*(V)^G$ is finitely generated. Thus we may assume $A = \Sym^*(V)$ is a polynomial ring with linear action $G \ra \GL(V)$. Since $A^G$ is Noetherian, the ideal $A_+^G := \oplus_{n > 0} \Sym^n(V)^G$ is finitely generated and the generators of this ideal are generators of $A^G$ as a $k$-algebra. 
\end{proof}

Popov \cite{Popov} proved a converse to Nagata's theorem: for any non-reductive group $G$ there is an affine scheme $X$ such that $\cO(X)^G$ is not finitely generated.

In some simple situations, the ring of invariants for a non-reductive group is finitely generated; however, the corresponding morphism of schemes may fail to be a good quotient (see $\S$\ref{sec additive bad}).

\begin{thm}[Weitzenb\"{o}ck \cite{Weitzenbock}]\label{thm weitzenbock}
Assume that the characteristic of $k$ is zero, then any linear $\GG_a$-action on $\AA^n$ extends to $\SL_2$. In this case, the invariant ring $\cO(\AA^n)^{\GG_a}$ is finitely generated.
\end{thm}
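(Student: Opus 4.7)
The plan is to first upgrade the given $\GG_a$-action to an $\SL_2$-action along the inclusion $\iota : \GG_a \hookrightarrow \SL_2$ as the upper triangular unipotent subgroup, and then deduce finite generation from the transfer principle of Remark \ref{rmk transfer principle} combined with Nagata's Theorem \ref{thm Nagata fgen} applied to the reductive group $\SL_2$.

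For the extension step, a linear $\GG_a$-action on $\AA^n$ is precisely a group scheme homomorphism $\rho : \GG_a \to \GL_n$, and in characteristic zero it is determined by its derivative $N := d\rho(1) \in \mathfrak{gl}_n$, which is nilpotent since $\rho$ lands in unipotent matrices, with $\rho(t) = \exp(tN)$. I would invoke the Jacobson--Morozov theorem to embed $N$ in an $\mathfrak{sl}_2$-triple $(N,H,N_+)$ inside $\mathfrak{gl}_n$, yielding a Lie algebra homomorphism $\mathfrak{sl}_2 \to \mathfrak{gl}_n$; since $\SL_2$ is simply connected in characteristic zero, this integrates uniquely to a group homomorphism $\tilde\rho : \SL_2 \to \GL_n$ whose restriction along $\iota$ recovers $\rho$. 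A more elementary alternative is to put $N$ in Jordan normal form, recognise each Jordan block of size $m$ as the symmetric power $\Sym^{m-1}(k^2)$ of the standard $\SL_2$-representation restricted to the unipotent subgroup $\GG_a$, and sum these extensions over the blocks.

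For finite generation, the transfer principle gives
\[
\cO(\AA^n)^{\GG_a} \;\cong\; \bigl(\cO(\AA^n) \otimes \cO(\SL_2/\GG_a)\bigr)^{\SL_2}.
\]
The homogeneous space $\SL_2/\GG_a$ is identified with the orbit of $(1,0) \in \AA^2$ under the standard $\SL_2$-action, namely $\AA^2 \setminus \{0\}$, whose ring of global functions is the polynomial ring $k[x,y]$ (the missing point has codimension $2$). Thus the right-hand side is the ring of $\SL_2$-invariants inside the finitely generated $k$-algebra $k[t_1,\dots,t_n,x,y]$ equipped with its induced rational $\SL_2$-action, and Nagata's theorem applied to the reductive group $\SL_2$ delivers finite generation of the invariants and hence of $\cO(\AA^n)^{\GG_a}$.

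The hardest step will be the extension to $\SL_2$, and it is where the characteristic-zero hypothesis enters essentially: the $\exp/\log$ correspondence between linear $\GG_a$-actions and nilpotent elements of $\mathfrak{gl}_n$ requires denominators, and the Jacobson--Morozov theorem (equivalently, the integration from $\mathfrak{sl}_2$ to $\SL_2$) likewise fails in positive characteristic.
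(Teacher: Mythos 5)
Your proposal is correct and follows essentially the same route as the paper: the extension to $\SL_2$ via Jordan blocks matching $\Sym^{m-1}(k^2)$ (your ``elementary alternative'' is precisely the paper's argument, with Jacobson--Morozov as an equivalent repackaging), and finite generation via the transfer principle, the identification $\SL_2/\GG_a \cong \AA^2\setminus\{0\}$, extension of functions across the codimension-$2$ origin, and Nagata's theorem for the reductive group $\SL_2$. No gaps.
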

\begin{proof}[Proof (after Seshadri \cite{Seshadri_weitzenbock})]
The first statement follows by putting the associated locally nilpotent derivation (see $\S$\ref{sec Ga LND}) in Jordan normal form, so that each length $n$ Jordan block corresponds to the standard $\SL_2$-representation $\Sym^{n-1}(k^2)$; for details, see \cite[Lemma 10.2]{GrosshansBook}.

Assuming that the linear $\GG_a$-action extends to $\SL_2$, let us prove that the ring of invariants is finitely generated. By the transfer principle (Remark \ref{rmk transfer principle}), the ring of $\GG_a$-invariants on $\AA^n$ is isomorphic to the ring of $\SL_2$-invariants on $\AA^n \times^{\GG_a} \SL_2$.

For $\GG_a$-acting on $\SL_2$ by left multiplication, the bottom row is invariant and thus the map $\SL_2 \ra  \AA^{2} \setminus \{ 0 \}$ given by sending a matrix $A \in \SL_2$ to its bottom row $(a_{21},a_{22})$ is $\GG_a$-invariant. In fact, this map is an orbit space and $\SL_2 /\GG_a \cong \AA^{2} \setminus \{ 0 \}$; one way to see this is to note that $\GG_a$ is the $\SL_2$-stabiliser of $(1,0) \in \AA^2$ and its orbit $\SL_2 \cdot (1,0) = \AA^{2} \setminus \{ 0 \}$ is isomorphic to $\SL_2/\GG_a$.  Since the $\GG_a$-action on $\AA^n$ extends to $\SL_2$, we have an $\SL_2$-equivariant isomorphism
\[ \AA^n \times^{\GG_a} \SL_2 \cong \AA^n \times \SL_2/\GG_a. \]
This is only quasi-affine (so its coordinate ring may not be finitely generated), but as $ \{ 0 \} \subset \AA^2$ has codimension 2, any regular function extends from $\AA^{2} \setminus \{ 0 \} $ to $\AA^2$ by Hartogs' lemma. Hence,
\[ \cO(\AA^n)^{\GG_a} \cong \cO(\AA^n \times^{\GG_a} \SL_2)^{\SL_2} \cong \cO(\AA^n \times \SL_2/\GG_a)^{\SL_2} \cong \cO(\AA^{n+2})^{\SL_2} \]
is finitely generated.
\end{proof}

\begin{rmk}\label{rmk Grosshans subgp}
The second part of the above proof for $H = \GG_a < G = \SL_2$ can be extended to any \emph{Grosshans subgroup}, which is a closed subgroup $H < G$ of a reductive group such that $G/H$ is quasi-affine and $\cO(G/H)=\cO(G)^H$ is finitely generated (see \cite{GrosshansBook} for a detailed treatment). Grosshans \cite{Grosshans_obs} shows $\cO(G)^H$ is finitely generated if and only if $G/H$ can be emdedded in an affine variety with complement of codimension $2$, and proves that unipotent radicals of parabolic subgroups in a reductive group are Grosshans subgroups \cite{Grosshans_inv}. For a Grosshans subgroup $H < G$, the same proof shows that \emph{if} a $H$-action an an affine scheme $X$ \emph{extends to $G$} (which is not immediate as in the case of Weitzenb\"{o}ck's Theorem), then $\cO(X)^H$ is finitely generated.
\end{rmk}

The proof shows that for a non-reductive group, even if the ring of invariants is finitely generated, taking its spectrum does not necessarily provide a categorical quotient: for $\GG_a$-acting on $\SL_2$, we have $\cO(\SL_2)^{\GG_a} = k[x_{21},x_{22}]$, but the induced map $\SL_2 \ra \AA^2$ is not surjective, so $\AA^2$ is not the categorical quotient. In fact, even worse, the image may only be a constructible subset  (see $\S$\ref{sec additive bad}). In the next subsection we will see that when $G$ is reductive, taking the spectrum of the ring of invariants does give a categorical quotient.

\subsection{Affine geometric invariant theory for reductive groups}\label{sec affine GIT}

Let $G$ be a reductive group acting on an affine scheme $X$. There is an induced action of $G$ on the coordinate ring $\cO(X)$ and the ring of invariants $\cO(X)^G$ is a finitely generated $k$-algebra by Nagata's Theorem.

\begin{defn}[Affine GIT quotient]
For an action of a reductive group $G$ on an affine scheme $X$, the \emph{affine GIT quotient} is the morphism $\varphi : X \ra X/\!/G:=\Spec \cO(X)^G$ of affine schemes associated to the inclusion $\varphi^*: \cO(X)^G \hookrightarrow \cO(X)$. 
\end{defn}

The double slash notation $X /\!/G$ used for the GIT quotient is a reminder that this quotient is not necessarily an orbit space and so it may identify some orbits. In nice cases, the GIT quotient is an orbit space and in this case we shall write $X/G$.

\begin{thm}[Mumford, {\cite[Theorem 1.1]{mumford}}]
For a reductive group $G$ acting on an affine scheme $X$, the affine GIT quotient $X \ra X/\!/G$ is a good quotient and thus categorical quotient. 
\end{thm}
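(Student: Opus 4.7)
The plan is to verify the three conditions of Definition \ref{good} for $\varphi : X \to Y := X/\!/G$, and then invoke Remark \ref{rmk conseq good}(d) to conclude that $\varphi$ is also categorical. Condition (i) decomposes into $G$-invariance, which is immediate from the construction since $\varphi^*$ is the inclusion $\cO(X)^G \hookrightarrow \cO(X)$; affineness, which holds trivially as $\varphi$ is a morphism of affine schemes; and surjectivity, which I will deduce at the end from (ii) and (iii) via Remark \ref{rmk conseq good}(b).

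For condition (ii), since $Y$ is affine, it suffices to check the isomorphism $\cO_Y \cong \varphi_* \cO_X^G$ on distinguished opens $D(f) \subset Y$ for $f \in \cO(X)^G$. Since $\varphi^{-1}(D(f)) = D(f) \subset X$, the task reduces to establishing the algebraic identity
\[ (\cO(X)_f)^G = (\cO(X)^G)_f . \]
The inclusion $\supseteq$ is immediate. For $\subseteq$, given $a/f^n \in (\cO(X)_f)^G$, I would use the rationality of the $G$-action (Lemma \ref{fdim lemma}) to embed $a$ in a finite-dimensional $G$-stable subspace $V \subset \cO(X)$, observe that the $f$-torsion sub-$G$-module of $V$ is killed by some uniform power $f^m$, and conclude that $f^m a \in \cO(X)^G$, whence $a/f^n = f^m a/f^{n+m}$ lies in $(\cO(X)^G)_f$. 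Notably this step uses only rationality, not reductivity.

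The heart of the argument, and the step I expect to be hardest, is condition (iii), which is where geometric reductivity pulls its weight. Given disjoint closed $G$-invariant subschemes $W_1, W_2 \subset X$, Lemma \ref{geom red lem} supplies an invariant $f \in \cO(X)^G$ with $f|_{W_1}=0$ and $f|_{W_2}=1$, which immediately separates $\varphi(W_1)$ and $\varphi(W_2)$ set-theoretically in $Y$. The more subtle part is showing that each $\varphi(W_i)$ is itself closed. I would do this by contradiction: supposing $y \in \overline{\varphi(W_i)} \setminus \varphi(W_i)$, the disjoint $G$-invariant closed subsets $W_i$ and $\varphi^{-1}(y)$ (provided the fibre is nonempty) admit by Lemma \ref{geom red lem} another separating invariant, which viewed as an element of $\cO(Y)$ takes different values on $y$ and on $\varphi(W_i)$, contradicting $y \in \overline{\varphi(W_i)}$.

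This last argument presupposes that fibres of $\varphi$ are nonempty, i.e.\ surjectivity, so I would handle surjectivity independently at this point: if some $y \in Y$ had empty preimage, then the maximal ideal $\mathfrak{m}_y \subset \cO(X)^G$ would generate the unit ideal in $\cO(X)$, and applying the Reynolds operator available from reductivity (constructed in the proof of Theorem \ref{thm Nagata fgen}) to a relation $1 = \sum a_i f_i$ with $f_i \in \mathfrak{m}_y \subset \cO(X)^G$ would yield $1 = \sum R(a_i) f_i \in \mathfrak{m}_y$, a contradiction. With conditions (i)--(iii) established, $\varphi$ is a good quotient, hence categorical by Remark \ref{rmk conseq good}(d).
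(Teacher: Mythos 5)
The paper does not reproduce a proof of this theorem (it defers to Mumford and Newstead, noting only that the key inputs are finite generation, Lemma \ref{geom red lem}, and---for linearly reductive groups---exactness of invariants), so your proposal can only be measured against the standard argument. Your overall architecture is exactly that standard argument, and conditions (i)--(iii) are handled correctly: the localisation identity $(\cO(X)_f)^G=(\cO(X)^G)_f$ via rationality and uniform annihilation of the $f$-torsion in a finite-dimensional $G$-stable subspace is right (and you correctly observe it needs no reductivity), and the separation-plus-contradiction argument for condition (iii) using Lemma \ref{geom red lem} is the correct use of geometric reductivity.

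The one genuine gap is your surjectivity step. The theorem is stated over an arbitrary algebraically closed field, where ``reductive'' means only \emph{geometrically} reductive; the Reynolds operator is constructed in the proof of Theorem \ref{thm Nagata fgen} only under the \emph{linear} reductivity hypothesis, and it genuinely fails to exist in positive characteristic (e.g.\ for $\SL_2$ in characteristic $p$). So the argument ``apply $R$ to $1=\sum a_if_i$'' proves surjectivity only in the linearly reductive case, and your proof of (iii) leans on surjectivity through the nonemptiness of $\varphi^{-1}(y)$, so the gap propagates. The standard repair is Nagata's lemma that for a geometrically reductive $G$ and an ideal $J\subset\cO(X)^G$ one has $J\cO(X)\cap\cO(X)^G\subset\sqrt{J}$ (equivalently, a power of every invariant of $\cO(X)/I$ lifts to $\cO(X)^G$ for $I$ a $G$-stable ideal); applied to $J=\fm_y$ this shows $1\in\fm_y\cO(X)$ forces $1\in\fm_y$, and it simultaneously gives $\varphi(W)=V(I(W)\cap\cO(X)^G)$, yielding closedness of images without presupposing nonempty fibres. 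In characteristic zero your argument is complete as written.
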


We will not include the proof of this result, but we note that the proof that the affine GIT quotient is good uses several properties of reductive group actions beyond simply the finite generation of the invariant ring: for a geometrically reductive group, invariant functions can be used to separate closed orbits (see Lemma \ref{geom red lem}) and, for linearly reductive groups, the proof can be simplified by using the fact that taking invariants is exact.

The affine GIT quotient restricts to a geometric quotient on an open stable subset $X^s \subset X$.

\begin{defn} 
A point $x \in X$ is \emph{stable} if its orbit is closed in $X$ and $\dim G_x=0$ (or equivalently, $\dim G \cdot x = \dim G$). We let $X^{s}$ denote the set of stable points.
\end{defn}

For $x \in X$, we note that $x$ is stable if and only if $\sigma_x : G \ra X$ is proper. Indeed if $\sigma_x$ is proper, then its image $G \cdot x$ is closed and the fibres, being both affine and proper, must be finite. Conversely if $x$ is stable, then $\sigma_x : G \ra G \cdot x$ has finite fibres and one can show it is finite.

\begin{ex}\label{ex wts plus minus 1}
For the $\GG_m$-action on $\AA^2$ by $t \cdot(x,y )=(tx,t^{-1}y)$, we have  $\cO(\AA^2)^{\GG_m} = k[xy]$ with affine GIT quotient $\varphi : \AA^2 \ra \AA^1$ is given by $(x,y) \mapsto xy$. It is not a geometric quotient, as the three orbits consisting of the punctured axes and the origin are all identified. The stable locus is the complement of $xy = 0$, which admits a geometric quotient $\AA^1 \setminus \{ 0 \}$.

If we remove the origin, the affine line with a double origin is a geometric quotient of $\AA^2 \setminus \{ 0 \}$. In this case, we obtain a non-separated quotient as a categorical quotient of a separated scheme.
\end{ex}.

\begin{ex}\label{ex conj 2 x 2}
Consider $G=\GL_2$ acting by conjugation on the space $\Mat_{2 \times 2}$ of $2 \times 2$ matrices with $k$-coefficients. The trace and determinant (which are the coefficients of the characteristic polynomial) are invariant functions, and so 
\[ k[\text{tr}, \det] \subset \cO(M_{2 \times 2})^{\GL_2}.\]
We will soon see this is in fact an equality.

First, we describe the orbits using the theory of Jordan normal forms. As any orbit contains a matrix in Jordan normal form, there are three types of orbits:
\begin{itemize}
\item Matrices with distinct eigenvalues $\alpha,\beta$ and Jordan normal form
\[ \left( \begin{array}{cc} \alpha & 0 \\ 0 & \beta \end{array} \right). \] These are closed 2 dimensional orbits, with 2 dimensional stabiliser (diagonal matrices).
\item Matrices with repeated eigenvalue and Jordan normal form with one block
\[ \left( \begin{array}{cc} \alpha & 1 \\ 0 & \alpha \end{array} \right). \]
These orbits are also 2 dimensional but are not closed: for example
\[ \lim_{t \to 0}  \left( \begin{array}{cc} t & 0 \\ 0 & t^{-1} \end{array} \right) \left( \begin{array}{cc} \alpha & 1 \\ 0 & \alpha \end{array} \right) \left( \begin{array}{cc} t^{-1} & 0 \\ 0 & t \end{array} \right)= \left( \begin{array}{cc} \alpha & 0 \\ 0 & \alpha \end{array} \right). \]
\item Matrices with repeated eigenvalue and Jordan normal form with two blocks
\[ \left( \begin{array}{cc} \alpha & 0 \\ 0 & \alpha \end{array}\right). \]
The stabiliser of such a matrix is $\GL_2$ and its orbit is a point, which is closed.
\end{itemize}
Every orbit closure of the second type contains an orbit of the third type. 

Let us show that $ \cO(\Mat_{2 \times 2})^{\GL_2} = k[\tr,\det]$. Since any orbit closure contains a diagonal matrix, any invariant function is completely determined by its values on the diagonal matrices and is invariant under permuting the diagonal entries. Hence
\[ \cO(\Mat_{2 \times 2})^{\GL_2} \subset k[x_{11},x_{22}]^{S_2}= k[x_{11} + x_{22}, x_{11}x_{22}] = k[\tr,\det]\]
by the theory of (elementary) symmetric polynomials.

The affine GIT quotient is $ \varphi = (\mathrm{tr},\det): \Mat_{2 \times 2} \ra \Mat_{2 \times 2}/\!/\GL_2 = \AA^2$. Since scalar multiples of the identity fix every point, there are no stable points for this action; however, the restriction to the locus of matrices with distinct eigenvalues is a geometric quotient. 
\end{ex}

\begin{exer}
Show the GIT quotient of $\GL_n$ acting on $\Mat_{n \times n}$ by conjugation is $\AA^n$.
\end{exer}

Newstead constructs moduli spaces of cyclic endomorphisms of vector spaces  \cite[Chapter 2]{newstead}.

\subsection{Projective geometric invariant theory}\label{sec proj GIT}

Suppose that a reductive group $G$ acts on a projective scheme $X \subset \PP^n$ linearly (i.e.\ by a representation $G \ra \GL_{n+1}$). The homogeneous coordinate ring of $X$ is the graded ring
\[ R(X) =  k[x_0,\dots , x_n]/I_X = \bigoplus_{r \geq 0} H^0(X,\cO(r))\]
where $\cO(1)$ denotes the pullback of $\cO_{\PP^n}(1)$ to $X$. By Nagata's theorem, $R(X)^G$ is finitely generated. The inclusion $R(X)^G \hookrightarrow R(X)$ determines a rational map of projective schemes
\begin{equation}\label{proj of invariants}
 X \dasharrow \proj R(X)^G
\end{equation}
whose indeterminacy locus is the closed subscheme of $X$ defined by the homogeneous ideal $R(X)_+^G:= \oplus_{r > 0} R(X)_r^G$. The domain of definition of this map is the GIT semistable locus.

\begin{defn}
Let $G$ be a reductive group acting linearly on a projective scheme $X \subset \PP^n$. 
\begin{enumerate}[label={\roman*)}]
\item We say $ x\in X$ is \emph{semistable} if there exists a $G$-invariant homogeneous function $f \in R(X)^G_r$ for some $r > 0$ such that $f(x) \neq 0$. We write $X^{ss}$ for the open set in $X$ of semistable points; this is the domain of definition of \eqref{proj of invariants}.
\item We say $x \in X$ is \emph{stable}\footnote{Usually stability is defined by asking for $\dim G_x = 0$ and for the existence of $f \in R(X)^G_r$ for some $r > 0$ non-vanishing at $x$ such that the $G$-action on $X_f$ is closed; however, this is equivalent to the stated definition.} if its orbit is closed in $X^{ss}$ and its stabiliser is zero dimensional. We write $X^{s}$ for the open set in $X$ of stable points.
\item The restriction of the rational map \eqref{proj of invariants} to the semistable locus $X^{ss} \ra X/\!/G := \proj R(X)^G$ is called the \emph{projective GIT quotient}, which is projective over $k$.
\end{enumerate}
\end{defn}

Rather confusingly, a point is called \emph{unstable} if it is not semistable; this terminology is now standard and there is not much we can do to change it! We refer to points which are semistable but not stable, as \emph{strictly semistable}.
If there are several groups acting on $X$, we clarify which group we mean by talking about \emph{$G$-(semi)stability}.

By definition, $X^{ss}$ is open as it is the domain of definition of the rational map \eqref{proj of invariants}. To see that $X^s$ is open, we use the equivalent formulation and note it is the intersection of two opens: the set of points with zero dimensional stabiliser is open as $x \mapsto \dim G_x$ is upper semi-continuous and the union of $X_f$ for $f \in R(X)^G_+$ on which the action on $X_f$ is closed is open.

\begin{thm}[Mumford, see {\cite[Theorem 3.14]{newstead}}] 
For a reductive group $G$ acting linearly on a projective scheme $X \subset \PP^n$, the projective GIT quotient $\varphi: X^{ss} \ra X/\!/G$ is a projective and good quotient, which restricts to a quasi-projective and geometric quotient of $X^s$. 
\end{thm}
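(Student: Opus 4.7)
The plan is to reduce the projective case to the affine GIT theorem via a $G$-invariant affine open cover, then glue, using the fact that the definition of a good quotient is local on the target (Remark \ref{rmk conseq good}). By Nagata's Theorem \ref{thm Nagata fgen} the graded ring $R(X)^G$ is finitely generated, so $X/\!/G := \Proj R(X)^G$ is a projective $k$-scheme. Choosing a finite set of homogeneous generators $f_1, \dots, f_N \in R(X)^G_+$, one obtains standard affine opens $(X/\!/G)_{f_i} = \Spec R(X)^G_{(f_i)}$ covering $X/\!/G$. By definition of semistability, the $G$-invariant affine opens $X_{f_i}$ cover $X^{ss}$.

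Next I would match these two covers. For each $i$, the coordinate ring $\cO(X_{f_i})$ equals the degree-zero localisation $R(X)_{(f_i)}$, and since localisation commutes with taking $G$-invariants (for any group action one has $A_f^G = (A^G)_f$ when $f$ is invariant), one gets $\cO(X_{f_i})^G = R(X)^G_{(f_i)}$. Applying the affine GIT theorem of $\S$\ref{sec affine GIT} to each $X_{f_i}$ produces a good quotient $\varphi_{f_i}\colon X_{f_i} \to \Spec R(X)^G_{(f_i)} = (X/\!/G)_{f_i}$. On the overlaps $X_{f_i} \cap X_{f_j} = X_{f_i f_j}$ these morphisms agree (both come from the inclusion of rings of invariants after localisation), so they glue to a morphism $\varphi\colon X^{ss} \to X/\!/G$, which is a good quotient because each defining condition in Definition \ref{good} holds locally on the target. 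Projectivity of $X/\!/G$ over $k$ follows from finite generation of $R(X)^G$.

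For the stable locus, I would first show that $X^s$ is saturated, i.e.\ $X^s = \varphi^{-1}(\varphi(X^s))$. Suppose $x \in X^s$ and $y \in X^{ss}$ with $\varphi(x) = \varphi(y)$. The separation property (Remark \ref{rmk conseq good}(iii), using Lemma \ref{geom red lem}) forces $\overline{G\cdot x} \cap \overline{G\cdot y} \neq \emptyset$; since $G\cdot x$ is already closed in $X^{ss}$, the unique closed orbit in $\varphi^{-1}(\varphi(x))$ is $G\cdot x$, and it must be contained in $\overline{G\cdot y}$. But $\dim G\cdot x = \dim G$ is maximal and boundaries of orbits consist of strictly lower-dimensional orbits, so $G\cdot x = G\cdot y$, giving $y \in X^s$. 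Hence $U := \varphi(X^s)$ is open in $X/\!/G$ (as the image under $\varphi$ of a saturated open), $\varphi^{-1}(U) = X^s$, and goodness of $\varphi$ restricts to a good quotient $\varphi^s\colon X^s \to U$. Since every orbit in $X^s$ is closed in $X^{ss}$, each fibre of $\varphi^s$ is a single orbit, so $\varphi^s$ is geometric; quasi-projectivity of $U$ is automatic as an open in a projective scheme.

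The main obstacle, as I see it, is the saturation argument for $X^s$: it is the only place where one must combine the separation property of good quotients with a dimension count on orbit closures, and it is what makes the passage from the good quotient on $X^{ss}$ to a genuine geometric quotient on $X^s$ work. The gluing step itself is essentially bookkeeping once one identifies $\Spec R(X)^G_{(f)}$ with the $G$-invariant affine quotient of $X_f$, and projectivity of the target is immediate from Nagata.
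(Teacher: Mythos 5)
Your proposal is correct and follows essentially the same route the paper sketches: cover $X^{ss}$ by the $G$-invariant affine opens $X_f$ for $f \in R(X)^G_+$, identify $\Spec R(X)^G_{(f)}$ with the affine GIT quotient of $X_f$, and glue using the fact that the definition of a good quotient is local on the target. Your saturation argument for $X^s$ (combining the separation property of good quotients with the orbit-dimension count) is exactly the standard completion of the argument in the cited source \cite{newstead}, so nothing is missing.
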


This result can be proved by gluing together affine GIT quotients: for $f \in R(X)_+^G$, the non-vanishing locus $X_f$ is affine with affine GIT quotient $X_f \ra X_f /\!/G$ and we can write $X^{ss}$ as the union of these open affines $X_f$, so $X/\!/G$ is covered by the open affines $X_f /\!/G$.

We have $\varphi(x) = \varphi(y)$ if and only if the orbit closures of $x$ and $y$ meet in $X^{ss}$. Furthermore, the preimage of any point in $X/\!/G$ contains a unique closed orbit (of minimal dimension in this preimage), whose stabiliser is reductive (see Remark \ref{rmk conseq good}).

\begin{rmk}
It is important to note that the semistable set and the GIT quotient both depend on the $G$-equivariant embedding $X \hookrightarrow \PP^n$, as the homogeneous coordinate ring depends on this embedding (or equivalently on the line bundle $\cO(1)$ pulled back from $\PP^n$). 

Alternatively, rather than fixing a linear $G$-equivariant projective embedding of $X$, one can instead fix an ample \emph{$G$-equivariant} line bundle $\cL$ on $X$, which is often called an ample \emph{$G$-linearisation}: $\cL = (L,\Phi)$ is an ample invertible sheaf $L$ on $X$ together with a $G$-equivariant structure given by an isomorphism $\Phi : \sigma^*L \ra \pi_2^* L$, where $\sigma, \pi_2 : G \times X \ra X$ denote the action and second projection, which satisfies a cocycle condition $\pi_{23}^* \Phi \circ (\Id_G \times \sigma)^* \Phi = (m \times \Id_G)^* \Phi$ on $G \times G \times X$. In terms of the associated geometric line bundle, which by abuse of notation we shall also call $L$, this is equivalent to a $G$-action on $L$ commuting with the projection $L \ra X$ such that the action on the fibres $L_{g \cdot x} \ra L_{x}$ is linear.

Given an ample \emph{$G$-equivariant} line bundle $\cL$ on $X$, we obtain a graded ring with a $G$-action
\[ R(X,\cL) = \bigoplus_{r \geq 0} H^0(X,L^{\oplus r}) \]
such that the inclusion of invariants induces a rational map whose domain of definition is the semistable set and whose codomain is the GIT quotient (both with respect to $\cL$)
\[ X^{ss}(\cL) \ra X/\!/_{\!\cL} G := \proj  R(X,\cL)^G. \]
Since replacing $\cL$ with a positive power just has the effect of changing the grading on this ring\footnote{By construction, the projective GIT quotient comes with a line bundle and this regrading does not change the GIT quotient but does change this line bundle.}, we can assume $\cL$ is very ample and then we obtain a linear $G$-equivariant embedding $X \hookrightarrow \PP(V)$ where $V := H^0(X,L)^*$, which recovers the above setting of a linear action.

The effect of changing $\cL$ is called \emph{variation of GIT} and can be described in terms of certain birational transformations known as VGIT flips \cite{dh,thaddeus}. Furthermore, the space of $G$-linearisations admits a wall and chamber decomposition describing how semistability varies: in chambers, semistability coincides with stability, but semistability changes on crossing a wall.
\end{rmk} 

\subsection{General GIT quotients}\label{sec general GIT}

More generally, given a scheme $X$ with a $G$-linearisation $\cL$, Mumford defines a GIT quotient using invariant sections of positive powers of $L$ whose non-vanishing locus is affine (so that one can take affine GIT quotients and glue them). This produces a good quotient of a \lq semistable locus' (\cite[Definition 1.7]{mumford}), which in this situation is defined to be the set of points $x \in X$ such that there exists $\sigma \in H^0(X,L^{\oplus r})^G$ for $r >0$ with $\sigma(x) \neq 0$ and such that $X_\sigma$ is affine\footnote{If $X$ is projective and $\cL$ is ample, then this non-vanishing locus is always affine.}. The semistable set and quotient obtained in this way are both quasi-projective (see \cite[Theorem 3.21]{newstead}).

Let us remark that in this survey we have assumed that we are working over an algebraically closed field $k$. The assumption that $k$ is algebraically closed can be dropped, but one has to be careful about rationality questions and work with geometric points for certain statements (for example, the Hilbert--Mumford criterion). Moreover, Seshadri \cite{Seshadri_GIT} extended GIT to work relative to a base scheme $S$ with mild assumptions on $S$. 

\subsection{Affine GIT linearised by a character}\label{sec affine GIT twisted char}

As a special case of $\S$\ref{sec general GIT}, consider a linear action of $G$ on an affine scheme $X \subset \AA^n$; then the structure sheaf $\cO_X$ is naturally equipped with a $G$-equivariant structure, where if we view this as a geometric line bundle $X \times \AA^1$, the $G$-action on $\AA^1$ is trivial. In this case, the GIT quotient with respect to this ample $G$-linearisation $\cO_X$ is just the affine GIT quotient, as 
\[ R(X,\cL) = \cO(X)[z] \]
with trivial $G$-action on $z$ and this ring is graded by the degree of $z$, thus \[ X /\!/_{\! \cO_X} G := \proj R(X,\cL)^G = \proj \cO(X)^G[z] = \spec \cO(X)^G = X/\!/ G. \]

This linearisation can be modified by using a character $\rho : G \ra \GG_m$ to obtain a linearisation $\cO_{\rho}$ which is given by $G$ acting linearly on the geometric line bundle $X \times \AA^1$ by the given action on $X$ and acting via multiplication with $\rho$ on $\AA^1$. The outcome of applying GIT in this situation of twisting the linearisation by a character was described by King \cite{king} and results in an open subset $X^{\rho-ss}$ of $\rho$-semistable points and a GIT quotient 
\[ X^{\rho-ss} \ra X/\!/_{\!\!\rho} G:= \proj \bigoplus_{r \geq 0} H^0(X,(\cO_\rho)^{\otimes r})^G. \]
In this case, the $G$-invariant sections of $(\cO_{\rho})^{\otimes r} \cong \cO_{\rho^r}$ are $f \in \cO(X)$ with $f(g \cdot x) = \rho^r(g) f(x)$ for all $g\in G$ and $x \in X$, which we refer to as \emph{$\rho$-semi-invariant functions of weight $r$}. By definition, $x$ is $\rho$-semistable if there exists a $\rho$-semi-invariant function of weight $r >0$ which is non-vanishing at $x$. Furthermore, $X/\!/_{\!\!\rho} G$ is projective over the spectrum of the $0$th-graded piece which is just the affine GIT quotient $X/\!/G = \spec \cO(X)^G$.

\begin{ex}\label{ex Pn as twisted affine quotient}
For $\GG_m$ acting on $\AA^n$ by scalar multiplication linearised by $\cO_{\rho}$ for $\rho : \GG_m \ra \GG_m$ given by $t \ra t$, the coordinate functions are $\rho$-semi-invariant functions of weight $1$ and these generate the ring of invariants. Consequently, we obtain the GIT quotient
\[ (\AA^n)^{\rho -ss} = \AA^n \setminus \{ 0 \} \ra \AA^n /\!/_{\!\!\rho} \GG_m = \proj k[x_1,\dots,x_n] = \PP^{n-1}.\]
\end{ex}

\section{Semistability and instability in reductive GIT}

Since the reductive GIT quotient only provides a quotient of an open semistable locus, this naturally leads to two questions: can we describe the semistable points and what can we say about unstable (i.e.\ not semistable) points? For actions on projective (over affine) schemes, the first question is tackled by the Hilbert--Mumford criterion for semistability described in $\S$\ref{sec HM}. In moduli problems with a natural notion of subobjects, the Hilbert--Mumford criterion often gives a clean moduli-theoretic interpretation of GIT semistability. We state some application of reductive GIT to moduli in $\S$\ref{sec app GIT moduli}. We then turn to the second question in $\S$\ref{sec instability} and describe how work of Kempf \cite{kempf}, Hesselink \cite{hesselink}, Kirwan \cite{kirwan} and Ness \cite{ness} gives a stratification of the unstable locus, with a largely combinatorial flavour; we survey some applications of these stratifications and discuss the question of construction quotients of unstable strata, where naturally non-reductive groups (namely, parabolic subgroups, representing an instability flag) appear.

\subsection{Semistability and the Hilbert--Mumford criterion}\label{sec HM}

By definition, semistability in reductive GIT is given in terms of the existence of a non-vanishing invariant section. From this definition, it is extremely challenging to determine semistability, as it is essentially equivalent to computing invariant rings, which is a notoriously challenging problem. Fortunately, in certain situations (projective GIT, affine GIT linearised by a character or more generally a projective over affine set-up), the Hilbert--Mumford criterion reduces semistability to checking semistability for $\GG_m$-actions, which in turn can be combinatorially described using the weights of the action. More precisely, a $G$-semistable point is semistable for any subgroup, and thus in particular, for any $\GG_m$ contained in $G$; the Hilbert-Mumford criterion gives a converse to this statement.

For simplicity, throughout this section, we assume we have a linear representation $G \ra \GL(V)$ of a reductive group $G$ and consider the associated linear action on $X = \PP(V)$. We will describe the semistable points in this setting. For a closed subscheme $Y \subset X$ with a linear $G$-action, we have $Y^{ss} = Y \times_{X} X^{ss}$ and so it suffices to understand semistability on the ambient projective space. For an ample $G$-linearisation $\cL$ on $X$, using a power of $\cL$ puts us in this linear setting.

We will see several different versions of the Hilbert--Mumford criterion, which make it possible to determine semistability in practice. The first, and weakest, version is a topological criterion.

\begin{prop}[Topological Hilbert--Mumford criterion, {\cite[Proposition 2.2]{mumford}}] \label{prop HM top}
For a linear action of a reductive group $G$ on $\PP(V)$, the following statements hold for $x=[v] \in \PP(V)$.
\begin{enumerate}[label=\emph{\roman*)}]
\item $x$ is semistable if and only if $0 \notin \overline{G \cdot v}$;
\item $x$ is stable if and only if $\dim G_{v} = 0$ and $G \cdot v$ is closed in $V$.
\end{enumerate}
\end{prop}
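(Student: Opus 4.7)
The plan is to use the identification $H^0(\PP(V), \cO(r))^G \cong (\Sym^r V^*)^G$ to reduce both statements to the linear $G$-action on the affine cone $V$, comparing with $\PP(V)$ via the cone map $\pi \colon V \setminus \{0\} \to \PP(V)$. Part (i) is then direct: if $f$ is a homogeneous $G$-invariant of degree $r > 0$ with $f(v) \neq 0$, then $f$ is constant equal to $f(v)$ on $\overline{G \cdot v}$ while $f(0) = 0$, so $0 \notin \overline{G \cdot v}$. Conversely, if $0 \notin \overline{G \cdot v}$, then $\{0\}$ and $\overline{G \cdot v}$ are disjoint closed $G$-invariant subsets of $V$, and Lemma \ref{geom red lem} (applied to $G$ reductive, hence geometrically reductive) yields $f \in \cO(V)^G$ with $f(0) = 0$ and $f(v) = 1$; linearity of the action preserves the grading $\cO(V) = \bigoplus_r \Sym^r V^*$, so every homogeneous component $f_r$ is $G$-invariant, and since $f_0 = 0$ but $f(v) \neq 0$, some $f_r$ with $r > 0$ satisfies $f_r(v) \neq 0$, witnessing semistability of $x = [v]$.

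For part (ii), I set up two general comparisons. On stabilisers, the exact sequence $1 \to G_v \to G_x \to H \to 1$ identifies $H$ with a subgroup of $\GG_m$, namely the scaling character of $G_x$ on the line $kv$, so $H$ is finite or all of $\GG_m$; if $G \cdot v$ is closed in $V$ with $v \neq 0$, the latter case would give $\GG_m \cdot v \subset G \cdot v$ and hence $0 \in \overline{\GG_m \cdot v} \subset G \cdot v$, a contradiction, so $\dim G_x = \dim G_v$ whenever $G \cdot v$ is closed. On orbits, I would use the $\GG_m$-equivariant affine GIT quotient $\hat\varphi \colon V \to V /\!/ G$ (the $\GG_m$-action descends because it commutes with the linear $G$-action) together with the orbit--cone correspondence: for any $y \in \overline{G \cdot x}$ with lift $w \in V \setminus \{0\}$, one has $\hat\varphi(w) \in \overline{\GG_m \cdot \hat\varphi(v)}$, obtained by pulling back a curve witnessing $y \in \overline{G \cdot x}$ along the $\GG_m$-torsor $\pi$.

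With these in place, $(\Leftarrow)$ assumes $\dim G_v = 0$ and $G \cdot v$ closed in $V$, giving $x$ semistable by (i) and $\dim G_x = 0$ by the stabiliser comparison; if $y = [w] \in \overline{G \cdot x} \cap X^{ss}$ were outside $G \cdot x$, semistability of $y$ rules out $\hat\varphi(w) = \hat\varphi(0)$ and the orbit comparison forces $\hat\varphi(w) = \hat\varphi(tv)$ for some $t \in \GG_m$, so $\overline{G \cdot w}$ must share a unique closed orbit with $\overline{G \cdot tv} = G \cdot tv$; since $G \cdot tv$ has dimension $\dim G$ (using $G_{tv} = G_v$ of dimension zero) while $\dim G \cdot w \leq \dim G$, the only possibility is $G \cdot w = G \cdot tv$, giving $[w] = g \cdot x \in G \cdot x$, a contradiction. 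For $(\Rightarrow)$, stability of $x$ gives $\dim G_v = 0$ from $G_v \subset G_x$ and $0 \notin \overline{G \cdot v}$ from semistability via (i); a boundary point $w \in \overline{G \cdot v} \setminus G \cdot v$ projects to a semistable $[w] \in \overline{G \cdot x}$ (as $\overline{G \cdot w} \subset \overline{G \cdot v}$ avoids $0$), which by closedness in $X^{ss}$ lies in $G \cdot x$, forcing $w = tgv$ and hence $G \cdot tv \subset \overline{G \cdot v}$ of dimension $\dim G$; either $G \cdot tv = G \cdot v$ (giving $w \in G \cdot v$, contradiction) or $G \cdot tv$ is a strict boundary orbit (contradicting the strict dimension drop for boundary orbits).

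The main obstacle is the lifting step in the orbit comparison: rigorously showing that a lift $w$ of any $y \in \overline{G \cdot x}$ satisfies $\hat\varphi(w) \in \overline{\GG_m \cdot \hat\varphi(v)}$. This is the orbit--cone correspondence under the $\GG_m$-torsor $\pi$ and typically requires passing to an \'{e}tale cover to lift a witnessing curve from $\PP(V)$ to $V \setminus \{0\}$, combined with the fact that $\hat\varphi$ is a good quotient (hence behaves well with closures of $G$-invariant sets). Once this lifting is in hand, the dimension count driven by $\dim G_v = 0$ and the separation-of-closed-orbits property of reductive GIT closes out both implications without further subtlety.
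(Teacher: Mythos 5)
Your proof of part (i) is exactly the paper's proof: pass to the affine cone, note that a positive-degree homogeneous invariant non-vanishing at $v$ separates $\overline{G \cdot v}$ from the origin, and conversely apply Lemma \ref{geom red lem} to the disjoint closed $G$-invariant sets $\{0\}$ and $\overline{G\cdot v}$ and extract a positive-degree homogeneous component of the separating invariant. The paper stops there (``we will just give the proof of the first statement''), so for part (ii) there is no in-text proof to compare against; your argument for (ii) is the standard one and is essentially correct, but two steps deserve comment. First, the step you flag as the main obstacle is easier than you fear: $\pi : V \setminus \{0\} \to \PP(V)$ is an open map (a $\GG_m$-torsor), so $\pi^{-1}(\overline{G\cdot x}) = \overline{\GG_m \cdot G \cdot v} \cap (V\setminus\{0\})$; hence any lift $w$ of a point of $\overline{G\cdot x}$ lies in $\overline{\GG_m G v}$, and $\hat\varphi(w) \in \overline{\GG_m \cdot \hat\varphi(v)}$ follows from continuity and $G$-invariance of $\hat\varphi$ --- no curve lifting or \'{e}tale cover is needed. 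Second, the assertion that the orbit comparison together with semistability of $[w]$ forces $\hat\varphi(w) = \hat\varphi(tv)$ for some $t \in \GG_m$ has a small gap: a priori $\hat\varphi(w)$ could be a boundary fixed point of the $\GG_m$-orbit of $\hat\varphi(v)$ other than $\hat\varphi(0)$. This is repaired by noting that $\cO(V)^G = \bigoplus_{r\geq 0}(\Sym^r V^*)^G$ has only constants in degree zero, so choosing homogeneous generators of positive degrees embeds $V/\!/G$ in some $\AA^N$ on which $\hat\varphi(tv) \to \hat\varphi(0)$ as $t \to 0$ and diverges as $t \to \infty$ unless $\hat\varphi(v) = \hat\varphi(0)$; thus $\overline{\GG_m \hat\varphi(v)} \subset \GG_m\hat\varphi(v) \cup \{\hat\varphi(0)\}$, and semistability of $[w]$ excludes $\hat\varphi(0)$ as you say. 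With these two points made explicit, your dimension count, the strict dimension drop on orbit boundaries, and the uniqueness of the closed orbit in each fibre of the good quotient $\hat\varphi$ correctly close out both implications of (ii).
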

\begin{proof}
We will just give the proof of the first statement. By definition $x=[v]$ is semistable if and only if there is a $G$-invariant homogeneous polynomial $f \in R(X)^G$ which is non-zero at $x$. Since $f$ is $G$-invariant it is constant on orbit closures, and so $f$ separates the closed schemes $\overline{G \cdot v}$ and $0$, which shows these closed subschemes are disjoint. Conversely, if the closed $G$-invariant schemes $\overline{G \cdot v}$ and $0$ in $V$ are disjoint, then as $G$ is geometrically reductive, there exists a $G$-invariant polynomial $f \in \cO(V)^G$ separating these subsets
\[ f(\overline{G \cdot v})=1 \quad \text{and} \quad f(0) = 0\]
by Lemma \ref{geom red lem}. By considering the decomposition of $f = \sum_i f_i$ into ($G$-invariant) homogeneous pieces, we see there is a $G$-invariant homogeneous piece $f_i$ which is non-vanishing at $x$.
\end{proof}

\begin{defn}
For a linear action of a torus $T = \GG_m^n$ on $\PP(V)$, consider the associated weight decomposition $V = \oplus_{\chi \in X^*(T)} V_\chi$. We refer to the support of this decomposition as the $T$-weights on $\PP(V)$. For $x = [v] \in \PP(V)$, we write $v = \sum v_\chi$ and define the \emph{$T$-weight set} of this point to be
\[ \wt_T(x) = \wt_T(v) = \{ \chi : v_\chi \neq 0 \} \subset X^*(T) \cong \ZZ^n. \]
\end{defn}

For a $\GG_m$-action on a separated scheme, we will often use the following notation.

\begin{notn} 
If a morphism $f: \GG_m \ra S$, with $S$ separated, extends to $\tilde{f} :\AA^1 \ra S$, then this extension is unique and we write $\lim_{t \ra 0} f(t):= \tilde{f}(0)$. Similarly if $f$ extends to $\PP^1$, we write $\lim_{t \ra \infty} f(t):=\tilde{f}(\infty)$.
\end{notn}

We can now give a combinatorial description of (semi)stability for a $\GG_m$-action in terms of whether or not the origin lies in (the interior of) the convex hull of $\GG_m$-weights. 

\begin{prop}[Hilbert--Mumford for $\GG_m$-actions]\label{prop HM Gm}
For a linear action of $\GG_m$ on $\PP(V)$ and $x \in \PP(V)$, the following statements hold:
\begin{enumerate}[label=\emph{\roman*)}]
\item $x$ is $\GG_m$-semistable if and only if $ 0 \in \conv( \wt_{\GG_m}(x))$.
\item $x$ is $\GG_m$-stable if and only if $0 \in \Int (\conv( \wt_{\GG_m}(x)))$.
\end{enumerate}
\end{prop}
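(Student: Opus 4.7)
The plan is to apply the topological Hilbert--Mumford criterion (Proposition~\ref{prop HM top}) and translate it into an explicit analysis of the weight decomposition $v = \sum_{\chi} v_\chi$ of a lift of $x$. Write $\chi_{\min}, \chi_{\max}$ for the smallest and largest elements of $\wt_{\GG_m}(v) \subset \ZZ$; then $0 \in \conv(\wt_{\GG_m}(v)) = [\chi_{\min},\chi_{\max}]$ iff $\chi_{\min} \le 0 \le \chi_{\max}$, and $0$ lies in the interior iff both inequalities are strict.

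For the ``if'' direction of (i), I will exhibit a $\GG_m$-invariant homogeneous polynomial non-vanishing at $v$. If $0 \in \wt_{\GG_m}(v)$, any linear functional on $V$ which restricts to something non-zero on $v_0$ and vanishes on $V_\chi$ for $\chi \neq 0$ is of pure $\GG_m$-weight $0$, hence invariant. Otherwise $\chi_{\min} < 0 < \chi_{\max}$, and I would choose linear functionals $\ell_\pm$ dual to basis vectors in $V_{\chi_{\max}}, V_{\chi_{\min}}$ satisfying $\ell_+(v_{\chi_{\max}}) \neq 0$ and $\ell_-(v_{\chi_{\min}}) \neq 0$; these have $\GG_m$-weights $-\chi_{\max}, -\chi_{\min}$, so the monomial $\ell_+^{-\chi_{\min}} \ell_-^{\chi_{\max}}$ has weight $(-\chi_{\max})(-\chi_{\min}) + (-\chi_{\min})\chi_{\max} = 0$, is invariant, homogeneous and does not vanish at $v$. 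Conversely, if $0 \notin \conv(\wt_{\GG_m}(v))$, then all weights share the same strict sign, and the orbit map $t \mapsto t \cdot v = \sum t^\chi v_\chi$ limits to $0 \in V$ as $t \to 0$ (or $t \to \infty$), so $0 \in \overline{\GG_m \cdot v}$ and $x$ is unstable by Proposition~\ref{prop HM top}(i).

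For (ii), I would use Proposition~\ref{prop HM top}(ii), which asks for $\dim (\GG_m)_v = 0$ and $\GG_m \cdot v$ closed in $V$. Assume $\chi_{\min} < 0 < \chi_{\max}$: the coefficient $t^{\chi_{\min}} v_{\chi_{\min}}$ has norm tending to $\infty$ as $t \to 0$ and $t^{\chi_{\max}} v_{\chi_{\max}}$ blows up as $t \to \infty$, so the orbit morphism $\GG_m \to V$ extends to neither $0$ nor $\infty$; hence $\GG_m \cdot v$ is closed, and the stabiliser is cut out by the equations $t^\chi = 1$ for $\chi \in \wt_{\GG_m}(v)$ and is finite, since at least one weight is non-zero. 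Conversely, if $0 \notin \Int(\conv(\wt_{\GG_m}(v)))$, then either $v$ is unstable, or $\wt_{\GG_m}(v)$ lies on one side of $0$ with $0 \in \wt_{\GG_m}(v)$; in the latter case the limit $\lim_{t \to 0} t \cdot v = v_0$ (or its $t \to \infty$ analogue) exists in $V$ and either equals $v$ (so $(\GG_m)_v = \GG_m$ has positive dimension) or gives a strictly smaller orbit in $\overline{\GG_m \cdot v}$, breaking the closedness of $\GG_m \cdot v$.

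The only mildly subtle point is getting the signs right between $\GG_m$-weights on $V$ and on $V^*$; once this is pinned down, the ``if'' direction of (i) reduces to the purely combinatorial observation that $-\chi_{\min}$ and $\chi_{\max}$ are positive integers for which $(-\chi_{\min})(-\chi_{\max}) + \chi_{\max}(-\chi_{\min}) = 0$, and everything else is a direct limit computation in the weight decomposition.
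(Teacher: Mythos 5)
Your proof is correct, and it differs from the paper's in two ways worth recording. For part (i) the paper argues entirely through the topological criterion (Proposition \ref{prop HM top}): $x=[v]$ is semistable iff $0\notin\overline{\GG_m\cdot v}$, and since the only possible boundary points of the orbit are $\lim_{t\to 0}t\cdot v$ and $\lim_{t\to\infty}t\cdot v$, the origin lies in the closure iff $\wt_{\GG_m}(v)$ is contained in $\ZZ_{>0}$ or in $\ZZ_{<0}$, i.e.\ iff $0\notin\conv(\wt_{\GG_m}(x))$. You prove the unstable direction in exactly this way, but for the semistable direction you instead exhibit the invariant directly: the weight-zero monomial $\ell_+^{-\chi_{\min}}\ell_-^{\chi_{\max}}$ (or a weight-zero linear form when $0$ is itself a weight). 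Your sign bookkeeping is right --- $\ell_\pm$ have weights $-\chi_{\max},-\chi_{\min}$ on $V^*$, the exponents $-\chi_{\min},\chi_{\max}$ are positive integers, and the total weight cancels --- so this certifies semistability straight from the definition, bypassing the separation lemma (Lemma \ref{geom red lem}) on which the ``only if'' half of Proposition \ref{prop HM top} rests; that is a small but genuine gain in elementarity. Secondly, the paper explicitly proves only part (i), whereas you supply part (ii): when $\chi_{\min}<0<\chi_{\max}$ neither limit of the orbit map exists, so the orbit is closed and the stabiliser, cut out by the equations $t^\chi=1$ with some $\chi\neq 0$, is finite; conversely an endpoint weight $0$ forces either stabiliser $\GG_m$ or a fixed point $v_0\neq v$ sitting in $\overline{\GG_m\cdot v}\setminus\GG_m\cdot v$. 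That is the intended argument. The one cosmetic point: rather than saying a coefficient has ``norm tending to $\infty$'' (which presumes $k=\CC$), say that the orbit morphism $\GG_m\to V$ has a coordinate which is a Laurent polynomial with a genuine pole at $t=0$ (resp.\ $t=\infty$), hence does not extend; the conclusion is unchanged.
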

\begin{proof}
We again just prove the statement for semistability. By the topological Hilbert--Mumford criterion, we have that $x = [v] \in \PP(V)$ is $\GG_m$-semistable if and only if $0 \notin \overline{\GG_m \cdot v}$. Any point in the boundary of this orbit closure is either
\[ \lim_{t \ra 0} t \cdot v \quad \text{or} \quad \lim_{t \ra \infty} t \cdot v = \lim_{t \ra 0} t^{-1} \cdot v.\]
Moreover, we have $\lim_{t \ra 0} t \cdot v = 0$ if and only if $\wt_{\GG_m}(v) \subset \ZZ_{> 0}$ (and similarly $\lim_{t \ra \infty} t \cdot v = 0$ if and only if $\wt_{\GG_m}(v) \subset \ZZ_{< 0}$). Hence $x = [v] \in \PP(V)$ is $\GG_m$-semistable if and only if there exists $r_0 \leq 0$ and $r_\infty \geq \infty$ in $\wt_{\GG_m}(v)$, or equivalently $ 0 \in \conv (\wt_{\GG_m}(x))$.
\end{proof}

\begin{ex}
The linear action of $\GG_m$ on $X = \PP^{n}$ by \[t \cdot [x_0 : x_1 : \cdots : x_n] = [t^{-1} x_0 : t x_1 : \cdots : tx_n] \]
has weights $\pm 1$. Hence, for a point $x$ to be (semi)stable it needs both these weights, which means its first coordinate $x_0$ must be non-zero and at least one of the other coordinates $x_i$ for $i > 0$ must be non-zero. One can also see this by directly proving that 
\[ R(\PP^n,\cO(1))^{\GG_m} = k[ x_0x_1, \dots, x_0x_n ]. \]
In particular, $X^{ss} \cong \AA^{n} \setminus \{ 0 \}$ and $\PP^n/\!/\GG_m = \PP^{n-1}$ is a geometric $\GG_m$-quotient.
\end{ex}

The Hilbert--Mumford criterion will ultimately be a numerical criterion that phrases semistability in terms of the weights of \emph{1-parameter subgroups} (1-PS), which are non-trivial group homomorphisms $\lambda : \GG_m \ra G$. 

\begin{defn}[Hilbert--Mumford weight]
For a linear action of a reductive group $G$ on $\PP(V)$, we define the Hilbert-Mumford weight of $x= [v]$ at a 1-parameter subgroup $\lambda : \GG_m \ra G$ to be 
\[\mu(x,\lambda) := -\mathrm{min} \wt_{\lambda(\GG_m)}(x).\]
\end{defn}

Let us note some useful properties of the Hilbert--Mumford weight.

\begin{exer}\label{ex HM wt}
Show that the Hilbert--Mumford weight of $x=[v]$ has the following properties.
 \begin{enumerate} \renewcommand{\labelenumi}{(\arabic{enumi})}
 \item $\mu(x,\lambda)$ is the unique integer $\mu$ such that $\lim_{t \to 0} t^{\mu} \lambda(t) \cdot v$ exists and is non-zero.
  \item $\mu(x,\lambda) = \mu(x_0, \lambda)$ where $x_0 = \lim_{t \to 0} \lambda(t) \cdot x$ (and this limit exists as $X$ is projective).
  \item $\mu(x,\lambda) \leq 0 \iff  \lim_{t \to 0} \lambda(t) \cdot v$ exists, with equality if and only if $ \lim_{t \to 0} \lambda(t) \cdot v \neq 0$.
   \item $\mu(g \cdot x,g\lambda g^{-1})=\mu(x,\lambda)$ for all $g \in G$.
 \item $\mu(x,\lambda^n) = n \mu(x,\lambda)$ for a positive integer $n$.
\end{enumerate}
\end{exer}

For a linear $\GG_m$-action on $\PP(V)$, we see that for the 1-PS given by $\lambda(t) = t$, we have 
\[ \mu(x,\lambda) \geq 0 \iff  \lim_{t \to 0} t \cdot v \neq 0\]
and 
\[ \mu(x,\lambda^{-1}) \geq 0 \iff  \lim_{t \to \infty} t \cdot v \neq 0\]
Hence $x$ is semistable if $ \mu(x,-) \geq 0$ for $\lambda$ and $\lambda^{-1}$. Furthermore, $x$ is stable if and only if this inequality is strict for both 1-PSs. This is precisely the numerical version of the Hilbert--Mumford criterion that we now can state.

\begin{thm}[Hilbert--Mumford criterion, {\cite[Theorem 2.1]{mumford}}]\label{thm HM}
For a reductive group $G$ acting linearly on a projective scheme $X \subset \PP^n$, the following statements hold for $x \in X$.
\begin{enumerate}[label=\emph{\roman*)}]
\item $x$ is semistable if and only if $\mu(x,\lambda) \geq 0$ for all 1-PS $\lambda : \GG_m \ra G$,
\item $x$ is stable if and only if $\mu(x,\lambda) > 0$ for all 1-PS $\lambda : \GG_m \ra G$.
\end{enumerate}
\end{thm}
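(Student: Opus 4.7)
The plan is to reduce to the case of a 1-parameter subgroup via Proposition~\ref{prop HM Gm}, and then to invoke a structural result about orbit closures for reductive group actions in the hard direction.

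For the forward direction of (i), suppose $x$ is $G$-semistable and fix a 1-PS $\lambda\colon\GG_m\to G$. Any $G$-invariant homogeneous section non-vanishing at $x$ is automatically $\lambda(\GG_m)$-invariant, so $x$ is $\lambda(\GG_m)$-semistable, and applying Proposition~\ref{prop HM Gm} to this subtorus yields $0\in\conv(\wt_\lambda(x))$, equivalently $\mu(x,\lambda)\geq 0$.

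The converse is the heart of the proof. By the topological criterion (Proposition~\ref{prop HM top}), a lift $v\in V$ of $x$ is $G$-unstable iff $0\in\overline{G\cdot v}$. I would invoke the \emph{Iwahori lemma} (due to Iwahori and Mumford, sharpened by Kempf): for a reductive action on an affine scheme and any $w\in\overline{G\cdot v}$, there exists a 1-PS $\lambda$ with $\lim_{t\to 0}\lambda(t)\cdot v\in\overline{G\cdot w}$, and when $w$ is $G$-fixed the limit can be arranged to equal $w$. Applied to $w=0$ this produces a 1-PS with $\lim_{t\to 0}\lambda(t)\cdot v=0$, so all $\lambda$-weights on $v$ are strictly positive, giving $\mu(x,\lambda)<0$ and contradicting the hypothesis. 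The Iwahori lemma itself will be the main obstacle: I would prove it by using the valuative criterion to realise $w\in\overline{G\cdot v}$ via an arc $g(t)\in G(k(\!(t)\!))$ and then applying the Cartan/Iwahori decomposition $G(k(\!(t)\!))=G(k[\![t]\!])\cdot\{\lambda(t)\}\cdot G(k[\![t]\!])$ to extract the 1-PS factor modulo elements of $G(k[\![t]\!])$, which are absorbed by moving $v$ and $w$ within their orbits (see \cite[Chap.~4]{newstead}, \cite{kempf}).

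For (ii), the forward direction adds the observation that if $\mu(x,\lambda)=0$ for some 1-PS, then $v_0:=\lim_{t\to 0}\lambda(t)\cdot v$ exists and is nonzero; either $v_0=v$ so $\lambda\subset G_v$ (contradicting $\dim G_v=0$), or $v_0=h\cdot v$ for some $h\in G$ by closedness of $G\cdot v$ (Proposition~\ref{prop HM top}(ii)), and then $\lambda$ fixes $v_0=h\cdot v$ forces $h^{-1}\lambda h\subset G_v$ to be a nontrivial 1-PS, the same contradiction. For the converse, assume $\mu(x,\lambda)>0$ for all 1-PS; then $x$ is semistable by (i), and it remains to rule out (a) $G\cdot v$ not closed and (b) $\dim G_v>0$. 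Case (a) is excluded by applying the Iwahori lemma to a nonzero boundary point $v'\in\overline{G\cdot v}\setminus G\cdot v$, producing a 1-PS with $\mu(x,\lambda)=0$; case (b) is excluded because $G\cdot v\cong G/G_v$ is closed hence affine, so by Matsushima's theorem $G_v$ is reductive, and $G_v^\circ$ positive-dimensional reductive contains a nontrivial 1-PS fixing $v$, again giving $\mu(x,\lambda)=0$. Both cases contradict $\mu>0$, hence $x$ is stable.
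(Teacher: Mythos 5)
Your argument for semistability follows essentially the same route as the paper: reduce via the topological criterion (Proposition \ref{prop HM top}) to the assertion that a reductive group has enough one-parameter subgroups to detect $0\in\overline{G\cdot v}$, and obtain that assertion from the Cartan--Iwahori decomposition of $G(k(\!(t)\!))$. The paper isolates exactly this as the ``Fundamental Theorem of GIT'' and, like you, leaves its proof as a citation to \cite{mumford} and \cite{kempf} with a one-line indication of the method, so on this point you and the paper are at the same level of detail. Where you go beyond the paper is part (ii): the survey does not prove the stability statement at all, whereas you supply it at the cost of two extra inputs --- the strengthened Kempf form of the lemma (a $1$-PS whose limit lands in $\overline{G\cdot w}$ for an arbitrary boundary point $w$, used to rule out non-closed orbits) and Matsushima's theorem (a closed, hence affine, orbit forces $G_v$ reductive, used to rule out positive-dimensional stabilisers). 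Both steps are sound, and the Matsushima input is already implicitly invoked elsewhere in the paper (Remark \ref{rmk conseq good}, citing \cite{luna}). One cosmetic imprecision in the forward direction of (i): $0\in\conv(\wt_\lambda(x))$ is equivalent to $\mu(x,\lambda)\geq 0$ \emph{and} $\mu(x,\lambda^{-1})\geq 0$ jointly, not to $\mu(x,\lambda)\geq 0$ alone; the implication you actually need, namely $0\in\conv(\wt_\lambda(x))\Rightarrow\min\wt_\lambda(x)\leq 0$, is of course correct.
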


Note that it suffices to check these inequalities for \emph{primitive} 1-PSs (i.e.\ 1-PSs which are not positive powers of another 1-PS); see Exercise \ref{ex HM wt}.

The full proof of the Hilbert--Mumford criterion is beyond the scope of this survey, but  following the topological version (Proposition \ref{prop HM top}), it suffices to show that the reductive group $G$ has enough 1-PSs to detect if the origin is contained in the closure of orbits of linear actions $G \ra \GL(V)$, which is precisely the following result (see \cite[p53]{mumford} and \cite[Theorem 1.4]{kempf}), whose proof involves the Cartan-Iwahori decomposition for the reductive group $G$.

\begin{thm}[Fundamental Theorem of GIT]
Let $G$ be a reductive group acting on an affine space $V$. If $v \in V$ and $0 \in \overline{G \cdot v}$, then there is a 1-PS $\lambda$ of $G$ such that $\lim_{t \to 0} \lambda(t) \cdot v = 0$.
\end{thm}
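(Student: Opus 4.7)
The plan is to combine a curve-selection argument with the Cartan decomposition for the reductive group $G$ over a discrete valuation ring, so as to extract a one-parameter subgroup from the abstract data of an orbit closure containing $0$.

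\emph{Step 1 (Curve selection).} Since $G\cdot v$ is constructible (Chevalley) and $0\in\overline{G\cdot v}\setminus G\cdot v$ (otherwise the orbit is already $\{0\}$ and any $\lambda$ works), standard noetherian reduction yields a smooth curve $C$ with a morphism $\gamma:C\to G$ such that the composition $c\mapsto\gamma(c)\cdot v$ extends to a morphism $\bar\gamma:\widetilde{C}\to V$ on the normalization sending some point $c_0$ to $0$. Passing to the local ring at $c_0$, I get a DVR $R$ with uniformizer $t$, fraction field $K$, and an element $g\in G(K)$ such that $g\cdot v\in V(R)$ and $(g\cdot v)(0)=0$.

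\emph{Step 2 (Cartan decomposition).} For the reductive group $G$ over $k$, fix a maximal torus $T\subset G$. The Cartan decomposition over the DVR $R$ gives
\[
G(K) \;=\; G(R)\cdot\{\lambda(t):\lambda\in X_*(T)\}\cdot G(R),
\]
so I can write $g=k_1\,\lambda(t)\,k_2$ with $k_1,k_2\in G(R)$ and $\lambda:\GG_m\to T\subset G$ a $1$-PS. Crucially, $k_1,k_2$ have well-defined reductions $k_i(0)\in G(k)$.

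\emph{Step 3 (Weight analysis).} Let $v':=k_2(0)\cdot v$ and decompose $V=\bigoplus_\chi V_\chi$ under $\lambda$. Because $k_1\in G(R)$, the condition $(g\cdot v)(0)=0$ propagates to $\lambda(t)(k_2\cdot v)\in V(R)$ with reduction $0$. Writing $k_2\cdot v=\sum_\chi u_\chi(t)$ with $u_\chi\in V_\chi\otimes R$ and $u_\chi(0)=v'_\chi$, we have
\[
\lambda(t)(k_2\cdot v)\;=\;\sum_\chi t^\chi u_\chi(t).
\]
The requirement that this lie in $V(R)$ and vanish at $t=0$ forces $v'_\chi=0$ for every $\chi\le 0$; hence all weights of $v'$ are strictly positive, and therefore $\lim_{t\to 0}\lambda(t)\cdot v'=0$. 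Conjugating by $k_2(0)$, the $1$-PS $\lambda':=k_2(0)^{-1}\lambda\, k_2(0)$ satisfies $\lim_{t\to 0}\lambda'(t)\cdot v = k_2(0)^{-1}\cdot\lim_{t\to 0}\lambda(t)\cdot v' = 0$, as required.

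\emph{Main obstacle.} The substantive ingredient is the Cartan decomposition $G(K)=G(R)\,T(K)\,G(R)$ in Step 2; this is precisely the structural input that distinguishes reductive $G$ from arbitrary affine groups and that guarantees \emph{enough} $1$-parameter subgroups to witness degenerations. Steps 1 and 3 are essentially formal once this decomposition is available, so the theorem ultimately rests on this nontrivial piece of the structure theory of reductive groups (Bruhat--Tits), which is what makes Mumford's Hilbert--Mumford criterion tick.
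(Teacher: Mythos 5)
Your proof is correct and follows essentially the route the paper itself indicates: the survey does not prove this theorem but cites Mumford and Kempf and remarks that the proof involves the Cartan--Iwahori decomposition for $G$, which is precisely your argument (reduction to a discrete valuation ring, the decomposition $G(K)=G(R)\cdot\{\lambda(t)\}\cdot G(R)$, weight analysis, and conjugation by $k_2(0)$). The only points to tidy are routine: one may need a finite extension of $K$ to lift the generic point of $\overline{G\cdot v}$ back to $G$ and a passage to the completion of $R$ for the Cartan decomposition to apply, and one should note that the resulting $\lambda$ is automatically non-trivial when $v\neq 0$.
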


\begin{rmk}[Hilbert--Mumford weight for a linearised action] \label{rmk HM weight linearised}
In the case of a linear $G$-action on a projective scheme $X \subset \PP^n$, the Hilbert--Mumford weight for $x \in X$ defined above depends on the choice of $G$-representation $G \ra \GL_{n+1}$ (as the weights depend on this representation). 

In general, for a $G$-linearisation $\cL = (L,\Phi)$ on a projective $G$-scheme $X$, we consider the $\lambda(\GG_m)$-fixed point $x_0 = \lim_{t \ra 0} t \cdot x$. The linearisation $\Phi$ induces a $\GG_m$-representation on the fibre of $L$ over $x_0$
\[  L_{x_0} = L_{\lambda(t) \cdot x_0} \stackrel{\cdot \lambda(t)^{-1}}{\longrightarrow} L_{x_0}\]
of weight $r$ (that is $\lambda(t)^{-1}$ acts on this fibre by $t \mapsto t^r$). Then the Hilbert--Mumford weight (with respect to $\cL$) is defined to be minus the weight on this fibre
\[ \mu^{\cL}(x,\lambda) = - r. \]
In this linearised situation, the Hilbert--Mumford criterion says $x \in X$ is semistable (with respect to $\cL$) if and only if $\mu^{\cL}(x,\lambda) \geq 0$ for all 1-PS $\lambda : \GG_m \ra G$.

If $X \subset \PP^n$ and $\cL = \cO(1)$ is the pullback of $\cO_{\PP^n}(1)$, then these two definitions coincide: we have $\mu^{\cO(1)}(x,\lambda) = \mu(x,\lambda)$ by \cite[Proposition 2.3]{mumford}.
\end{rmk}

As any 1-PS can be conjugated to lie in a fixed maximal torus $T < G$, one can phrase $G$-semistability of a point in terms of $T$-semistability of all $G$-translates of that point by Exercise \ref{ex HM wt} above. Then $T$-semistability can be stated combinatorially using the torus weights analogous to Proposition \ref{prop HM Gm} above. This gives a combinatorial Hilbert--Mumford criterion.

\begin{prop}[Torus weights version of Hilbert--Mumford criterion, {\cite[$\S$9.4]{dolgachev}}]
For a reductive group $G$ acting on a projective scheme $X \subset \PP^n$ linearly, fix a maximal torus $T < G$. For $x \in X$, the following statements hold.
\begin{enumerate}[label=\emph{\roman*)}]
\item $x$ is $G$-(semi)stable if and only if $g \cdot x$ is $T$-(semi)stable for all $g \in G$.
\item $x$ is $T$-semistable if and only  if $ 0 \in \conv( \wt_{T}(x))$.
\item $x$ is $T$-stable if and only if $0 \in \Int (\conv( \wt_{T}(x)))$.
\end{enumerate}
By the first statement, the $G$-semistable set is the $G$-sweep of the $T$-semistable set:
\[ X^{G-ss} = \bigcap_{g \in G} g \cdot X^{T-ss}. \]
\end{prop}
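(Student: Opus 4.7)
My plan is to reduce everything to the numerical Hilbert--Mumford criterion (Theorem~\ref{thm HM}) already established, and then translate the resulting inequalities into convex geometry on the character lattice $X^*(T)\otimes\RR\cong\RR^n$.

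\textbf{Proof of (i).} I would start from the structural fact that, for a reductive group $G$, every $1$-PS $\lambda:\GG_m\to G$ is conjugate into the fixed maximal torus $T$: there exist $g\in G$ and a $1$-PS $\lambda'$ of $T$ with $\lambda=g\lambda' g^{-1}$. Using the conjugation invariance $\mu(x,g\lambda' g^{-1})=\mu(g^{-1}\cdot x,\lambda')$ from Exercise~\ref{ex HM wt}(4), the condition $\mu(x,\lambda)\geq 0$ (resp.\ $>0$) for every $1$-PS $\lambda$ of $G$ is equivalent to $\mu(g\cdot x,\lambda')\geq 0$ (resp.\ $>0$) for every $g\in G$ and every $1$-PS $\lambda'$ of $T$. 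By Theorem~\ref{thm HM} applied to both $G$ and $T$, this is exactly the statement that $g\cdot x$ is $T$-(semi)stable for all $g$.

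\textbf{Proof of (ii) and (iii).} The key is an explicit formula for the Hilbert--Mumford weight on a torus. Writing $v=\sum_{\chi}v_{\chi}$ under the weight decomposition for $T$, a $1$-PS $\lambda\in X_*(T)$ acts by $\lambda(t)\cdot v=\sum_{\chi}t^{\langle\chi,\lambda\rangle}v_{\chi}$, so that the $\lambda(\GG_m)$-weights on $v$ are $\{\langle\chi,\lambda\rangle:\chi\in\wt_T(x)\}$, and therefore
\[
\mu(x,\lambda)=-\min_{\chi\in\wt_T(x)}\langle\chi,\lambda\rangle.
\]
Then the numerical Hilbert--Mumford criterion says that $x$ is $T$-semistable iff no $1$-PS $\lambda$ of $T$ has $\langle\chi,\lambda\rangle>0$ for every $\chi\in\wt_T(x)$, and $T$-stable iff no nonzero $\lambda$ has $\langle\chi,\lambda\rangle\geq 0$ for every $\chi\in\wt_T(x)$. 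Since $X_*(T)\otimes\QQ$ is dense in $X_*(T)\otimes\RR$ and scaling $\lambda$ by a positive integer rescales $\mu$ positively (Exercise~\ref{ex HM wt}(5)), these conditions are equivalent to their real-coefficient versions. Then (ii) and (iii) are a direct application of separation theorems for convex sets in $\RR^n$: the finite set $\wt_T(x)$ is separated from $0$ by a closed (resp.\ open) half-space iff $0\notin\conv(\wt_T(x))$ (resp.\ $0\notin\Int\conv(\wt_T(x)))$. The dimension condition in stability (namely $\dim G_x=0$) comes for free: if some nontrivial $\lambda$ lies in the stabiliser of $x$, then $\lim_{t\to 0}\lambda(t)\cdot v=v\neq 0$, which forces $\mu(x,\lambda)=0$ by Exercise~\ref{ex HM wt}(3), contradicting strict positivity.

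\textbf{The final identity.} The equality $X^{G\text{-}ss}=\bigcap_{g\in G}g\cdot X^{T\text{-}ss}$ is a direct reformulation of (i): $x$ is $G$-semistable iff $g\cdot x\in X^{T\text{-}ss}$ for every $g\in G$, iff $x\in g^{-1}\cdot X^{T\text{-}ss}$ for every $g$. I expect the main technical point to be the passage from rational to real $1$-parameter subgroups in the convex-geometry step for (iii), and the slightly delicate accounting needed to recover the stabiliser-dimension condition automatically from strict positivity of $\mu$; both points are standard but deserve to be spelled out to keep the argument self-contained.
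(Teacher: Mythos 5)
Your proof is correct and follows essentially the same route the paper indicates: the paper itself gives no detailed proof, only the one-sentence sketch preceding the proposition (conjugate every 1-PS into $T$ via Exercise~\ref{ex HM wt}(4), then translate the numerical criterion for $T$ into convex geometry of the weight set, generalising Proposition~\ref{prop HM Gm}), and defers the details to Dolgachev. The only imprecision is your closing remark that a nontrivial $\lambda$ in the stabiliser of $x=[v]$ forces $\lim_{t\to 0}\lambda(t)\cdot v=v$ --- such a $\lambda$ may act on $v$ by a nontrivial character $t\mapsto t^c$, and one must consider both $\lambda$ and $\lambda^{-1}$ to rule out $c\neq 0$ --- but this remark is redundant anyway, since the stabiliser condition is already packaged into Theorem~\ref{thm HM}, which you invoke.
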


\begin{exer}[Semistability for binary forms]\label{exer ss binary forms}
Consider the action of $\SL_2$ on the space of degree $d$ binary forms $\PP^d = \PP(k[x,y]_d)$. For $p_F \in \PP^d$ corresponding to $F(x,y) \in k[x,y]_d$, show
\begin{enumerate}[label=\emph{\roman*)}]
\item $F$ is semistable if and only if all roots of $F$ have multiplicity less than or equal to $d/2$;
\item $F$ is stable if and only if all roots of $F$ have multiplicity strictly less than $d/2$.
\end{enumerate}
\end{exer}

\begin{rmk}[Hilbert--Mumford criterion for action on affine scheme twisted by a character]\label{rmk HM affine char} For a linear action of a reductive group $G$ on an affine scheme $X \subset \AA^n$ linearised via $\cO_\rho$ for a character $\rho : G \ra \GG_m$ (see $\S$\ref{sec affine GIT twisted char}), King proved a topological Hilbert--Mumford criterion \cite[Lemma 2.2]{king}, by using the total space of the dual linearisation to replace the affine cone, and obtained the following numerical Hilbert--Mumford criterion \cite[Proposition 2.5]{king}: 
\begin{enumerate}
\item $x$ is $\rho$-semistable if and only if $\langle \rho, \lambda \rangle \geq 0$ for all 1-PS $\lambda : \GG_m \ra G$ such that $\lim_{t \ra 0} \lambda(t) \cdot x$ exists,
\item $x$ is $\rho$-stable if and only if $\langle \rho, \lambda \rangle > 0$ for all 1-PS $\lambda : \GG_m \ra G$ such that $\lim_{t \ra 0} \lambda(t) \cdot x$ exists,
\end{enumerate}
where $\langle \rho, \lambda \rangle = r$ if $\rho \circ \lambda(t) = t^r$, i.e.\ this is the natural pairing between characters and cocharacters. Using the abstract definition of the Hilbert--Mumford weight in terms of the weight of the action on the fibre over the limit point of the $\GG_m$-action (see Remark \ref{rmk HM weight linearised}), we see that if $x_0 = \lim_{t \ra 0} \lambda(t) \cdot x$ exists, then $ \mu^{\cO_\rho}(x,\lambda) = \langle \rho, \lambda \rangle.$
\end{rmk}

\begin{exer}\label{exer grassmannain}
Using Remark \ref{rmk HM affine char}, show that for $\GL_2$ acting on $\Mat_{2 \times n}$ by left multiplication the (semi)stable locus for the character $\rho = \det$ is the matrices of maximal rank (namely rank $2$), and so the GIT quotient is the Grassmannian $\Gr(2,n)$.
\end{exer}

We note that there is a Hilbert--Mumford criterion in the more general setting of a projective over affine variety with an action of a linearly reductive group \cite{ghh}.

\subsection{A brief survey of applications of reductive GIT to moduli}\label{sec app GIT moduli}

The notion of moduli functor is heavily influenced by Grothendieck's approach to algebraic geometry. Furthermore, Grothendieck proved that the Hilbert and Quot functors are representable by projective schemes; these are fine moduli spaces and provide parameter spaces in the GIT constructions of moduli of smooth projective curves and moduli of vector bundles on curves.

The first truly interesting application of GIT was Mumford's construction of moduli spaces of curves \cite[Chapter 5]{mumford}. Mumford  constructed a coarse moduli space $M_g$ for smooth projective curves of genus $g \geq 2$ by using a power of the canonical bundle to give a projective embedding $C \hookrightarrow \PP^{N}$ and constructing $M_g$ as a quotient of a suitable Chow variety parametrising pluricanonical curves. Gieseker  \cite{Giesker_curves} provided an alternative GIT construction of $M_g$ and its Deligne--Mumford compactification $\overline{M_g}$ via stable curves as a quotient of the $\PGL_{N+1}$-action on a suitable Hilbert scheme with a linearisation given by embedding in a Grassmannian associated to a sufficiently large choice of $m$. Although there is a direct proof that smooth curves are (asymptotically) GIT stable, the proof that stable curves are (asymptotically) GIT stable is indirect (see \cite[$\S$3.1]{Morrison}). Gieseker's Hilbert scheme construction is now the prevalent perspective, which has been generalised to give GIT constructions of moduli spaces of pointed stable curves and stable maps and their (birational) geometry is studied using VGIT (see \cite{Laza, Morrison}).

The other influential and successful application of GIT was the construction of moduli spaces of vector bundles (of fixed rank and degree) on a fixed smooth projective curve $C$. One of the first ideas to construct vector bundle moduli spaces over $k = \CC$ was to use unitary representations of the fundamental group $\pi_1(C)$ and led to the Narasimhan--Seshadri Theorem \cite{ns} relating irreducible representations with \emph{stable} vector bundles considered by Mumford \cite{Mumford_vb}, where Mumford's notion of stability came from the Hilbert--Mumford criterion in GIT and involves verifying an inequality of slopes (the ratio of the degree and the rank) for all subbundles. For moduli problems with a natural notion of subobjects, the study of 1-PSs in GIT often corresponds to filtrations by subobjects and stability can be phrased as an inequality for all subobjects. The GIT construction of moduli spaces of (semi)stable vector bundles was given by Seshadri \cite{seshadri} (see \cite[Chapter 5]{newstead}), and was later generalised by Simpson \cite{simpson} to construct moduli spaces of sheaves (and Higgs sheaves) on higher dimensional schemes as GIT quotients of Quot schemes. Quot schemes appear as semistable vector bundles can be parametrised as quotients of a fixed vector bundle as mentioned in Example \ref{ex moduli gp actions}\eqref{ex moduli 4}. This construction has been generalised to construct various bundle moduli spaces \cite{schmittbook}.

Mumford also applied GIT to construct moduli spaces of projective hypersurfaces $X \subset \PP^n$ of degree $d$ by taking a quotient of $\PGL_{n+1}$ acting on $\PP(k[x_1,\dots,x_n]_d)$ as in Example \ref{ex moduli gp actions} (3). He showed smooth hypersurfaces are GIT stable if $n \geq 2$ and $d \geq 3$ (see \cite[Chapter 4.2]{mumford}).

King \cite{king} developed GIT for a linear action on an affine space with respect to a character (see $\S$\ref{sec affine GIT twisted char}) to construct reasonable moduli spaces of \emph{semistable} representations of a quiver, where semistability depends on a stability parameter; the Hilbert--Mumford criterion gives a moduli-theoretic interpretation of semistability as an inequality holding for all subrepresentations.

\subsection{Instability}\label{sec instability}

In this section, we continue to suppose that we have a reductive group $G$ acting on a projective scheme $X \subset \PP^n$ linearly. Since the GIT quotient provides a categorical quotient of the semistable locus $X^{ss}$, it is natural to ask what can be said about the unstable points (i.e.\ not semistable points)
\[ X^{us} := X \setminus X^{ss}.\]
By the Hilbert--Mumford criterion, if a point is unstable, then it has a negative Hilbert--Mumford weight for some 1-PS. Starting from this observation, Kempf \cite{kempf} associated to an unstable orbit a conjugacy class of 1-PSs which are \lq most responsible' for its instability, in the sense that they minimise a \lq normalised Hilbert--Mumford weight'. Hesselink then used Kempf's work to statify the unstable locus \cite{hesselink}. This stratification was described more explicitly and, when $k = \CC$, compared with a Morse stratification associated to the norm square of the moment map for the action of a maximal compact subgroup by Kirwan \cite{kirwan} and Ness \cite{ness}.

Let us start by describing how to fix a conjugation invariant norm on 1-PSs of $G$.

\begin{defn}
A conjugation invariant norm on 1-PSs of a reductive group $G$ is given by fixing a maximal torus $T < G$ and a Weyl-invariant integral-valued bilinear form on the 1-PSs $X_*(T)$ of $T$ with associated norm $|| - ||$. For any 1-PS $\lambda : \GG_m \ra G$, there exists $g \in G$ such that $g \lambda g^{-1} \in X_*(T)$ and we define
\[ || \lambda ||:=|| g \lambda g^{-1} ||,\]
which is independent of the choice of $g$ due to the Weyl invariance.
\end{defn}

Over the complex numbers, such a norm can be constructed by fixing a Weyl invariant inner product on the Lie algebra $\mathfrak{t}$ of $T$, which gives an identification $\mathfrak{t} \cong \mathfrak{t}^*$.

\begin{ex} 
If $G = \GL_n$ and $T$ is the diagonal maximal torus, then the Euclidean norm on $\RR^n \cong X_*(T)_{\RR}$ is invariant under the Weyl group $S_n$.
\end{ex}

Using the norm $|| - ||$, we define a normalised Hilbert--Mumford weight and state Kempf's notion \cite{kempf} of an \emph{adapted} 1-PS for an unstable point.

\begin{defn}[Normalised Hilbert--Mumford weight and adapted 1-PS]
For a reductive group $G$ acting on a projective scheme $X \subset \PP^n$ linearly and a fixed conjugation invariant norm on 1-PSs of $G$, we define the \emph{normalised Hilbert--Mumford weight} of $x \in X$ at a 1-PS $\lambda$ to be $\mu(x,\lambda) / || \lambda ||$ and we define the \emph{minimum normalised Hilbert--Mumford weight of $x$} to be
\[ M(x) := \inf_{\lambda \in X_*(G)}  \frac{\mu(x,\lambda)}{|| \lambda ||}. \]
If $x$ is unstable, a primitive 1-PS is said to be \emph{adapted} to $x$ if it acheives this minimum and we write $\Lambda_x$ for the set of primitive 1-PSs adapted to $x$.
\end{defn}

Let us collect Kempf's results on adapted 1-PSs in the following theorem.

\begin{thm}[Kempf, \cite{kempf}]
Let $G$ be a reductive group acting on a projective scheme $X \subset \PP^n$ linearly and fix a conjugation invariant norm on 1-PSs of $G$. Then for an unstable point $x \in X$, we have $\Lambda_x \neq \emptyset$ and there is a parabolic subgroup $P_x < G$ with the following properties.
\begin{enumerate}[label=\emph{\roman*)}]
\item For any $\lambda \in \Lambda_x$, we have $P_x = P_\lambda$ (see Definition \ref{def para for 1-PS} below).
\item Any two 1-PSs in $\Lambda_x$ are conjugate by an element of $P_x$.
\item If $T < G$ is a maximal torus with $T < P_x$, then $\Lambda_x \cap X_*(T)$ is a single Weyl orbit.
\item We have $g \Lambda_x g^{-1} = \Lambda_{g \cdot x}$ for all $g \in G$.
\item If $\lambda \in \Lambda_x$, then $\lambda \in \Lambda_{x_0}$ and also $M(x) = M(x_0)$ where $x_0 := \lim_{t \ra 0} \lambda(t) \cdot x$.
\end{enumerate}
\end{thm}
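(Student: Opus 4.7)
The plan is to reduce everything to convex geometry on the real vector space $V_T := X_*(T)\otimes_{\ZZ}\RR$ for a fixed maximal torus $T \le G$. The Weyl-invariant integral form makes $V_T$ into a Euclidean space with rational inner product; identifying characters of $T$ with cocharacters via this inner product, the function
\[ \lambda \mapsto \mu(x,\lambda) = -\min_{\chi \in \wt_T(x)} \langle \chi, \lambda\rangle \]
extends continuously and piecewise linearly from $X_*(T)$ to $V_T$. The normalised weight $\mu(x,\lambda)/\|\lambda\|$ is scale-invariant, so the search may be restricted to the unit sphere in $V_T$.

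For existence (part (i)), I first handle the case when $x$ is $T$-unstable: then $C_x := \conv(\wt_T(x))$ does not contain $0$ by Proposition \ref{prop HM Gm}, and the Hilbert projection theorem produces a unique nearest point $p \in C_x$ to the origin. Since $C_x$ is a rational polytope under a rational inner product, $p$ lies in the rational span of $\wt_T(x)$, and rescaling its inner-product dual to a primitive cocharacter yields a unique primitive 1-PS $\lambda_T \in X_*(T)$ attaining the minimum of $\mu(x,\lambda)/\|\lambda\|$ on $X_*(T)$, with value $-\|p\|$. For general $G$-unstable $x$, every 1-PS of $G$ is conjugate to one in $T$, so
\[ M(x) = \inf_{g \in G}\bigl(-\|p_{g\cdot x}\|\bigr). \]
The crux is showing this infimum is attained. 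Kempf's argument handles this by reducing the non-compact search over $G$ to a compact search inside a parabolic attached to an almost-optimal 1-PS: near-optimal conjugates $g$ must lie in a single coset modulo the eventual $P_x$, after which compactness modulo the centraliser finishes.

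For the parabolic (parts (ii)--(iv)), set $P_\lambda := \{g \in G : \lim_{t\to 0} \lambda(t) g \lambda(t)^{-1} \text{ exists in } G\}$, a parabolic subgroup containing $\lambda(\GG_m)$. For a second $\lambda' \in \Lambda_x$, strict convexity of the squared norm forces $\lambda$ and $\lambda'$ to determine the same parabolic: otherwise an interpolating convex combination in a common over-torus would strictly improve the normalised weight, contradicting adaptedness of both. This defines $P_x := P_\lambda$ unambiguously, proving (ii). Fixing a maximal torus $T \le P_x$, the uniqueness of $p \in \conv(\wt_T(x))$ implies $\Lambda_x \cap X_*(T)$ is a single Weyl orbit, giving (iv); and since any two maximal tori of $P_x$ are $P_x$-conjugate, assembling the Weyl orbits across tori gives (iii).

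Part (v) is immediate from Exercise \ref{ex HM wt}(4) together with conjugation-invariance of the norm. For (vi), Exercise \ref{ex HM wt}(2) gives $\mu(x,\lambda) = \mu(x_0,\lambda)$, so the normalised weight of $\lambda$ coincides at $x$ and $x_0$; to conclude $\lambda \in \Lambda_{x_0}$ and $M(x_0)=M(x)$, I would use that $\mu(-,\lambda')$ is upper semicontinuous along the $\lambda$-orbit for any 1-PS $\lambda'$ (as $\GG_m$-weights can only jump up under specialisation), which gives $\mu(x_0,\lambda') \geq \mu(x,\lambda')$ and hence $M(x_0)\ge M(x)$; the reverse inequality is witnessed by $\lambda$ itself. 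The main obstacle throughout is the existence step: showing $M(x)$ is realised by an integer 1-PS rather than merely approached, given the non-compactness of $G$; Kempf's inductive reduction to the structure of adapted parabolics is the key technical input.
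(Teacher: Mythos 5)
The paper does not actually prove this theorem --- it cites Kempf and gives only a two-sentence sketch (existence via the finiteness of subsets of torus weights, and the last two parts via Exercise \ref{ex HM wt}) --- so the comparison is against that sketch and against Kempf's argument. Your torus-level convex geometry (unique closest point $p$ of $\conv(\wt_T(x))$ to the origin, rationality of $p$, optimal value $-\|p\|$) is correct and is exactly the paper's starting point, and your derivation of $g\Lambda_x g^{-1}=\Lambda_{g\cdot x}$ from Exercise \ref{ex HM wt}(4) is fine. However, there are gaps elsewhere. For existence over $G$, your reduction (``near-optimal conjugates $g$ must lie in a single coset modulo the eventual $P_x$'') is circular, since $P_x$ is what is being constructed; the clean route, which the paper points to in Remark \ref{rmk inst strat finite}, is that $M(x)=\inf_g M_T(g\cdot x)$ and $M_T(g\cdot x)$ depends only on $\wt_T(g\cdot x)$, which ranges over subsets of the finite set of $T$-weights on $V$, so the infimum is a minimum over finitely many values. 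For the conjugacy statements, your argument presupposes that two adapted 1-PSs $\lambda,\lambda'$ lie in ``a common over-torus'', which is false in general; the whole content of Kempf's theorem is that they can be brought into a common maximal torus by conjugating by elements of $P_\lambda$ and $P_{\lambda'}$, and this needs the omitted key step that conjugation by $p\in P_\lambda$ preserves adaptedness, i.e.\ $\mu(x,p\lambda p^{-1})=\mu(x,\lambda)$, which one gets from Exercise \ref{ex HM wt}(2) applied to $x_0=\lim_{t\ra 0}\lambda(t)\cdot x$ together with the fact that $q_\lambda(p)$ commutes with $\lambda$.

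The most serious error is in the last part ($\lambda\in\Lambda_{x_0}$ and $M(x)=M(x_0)$). You invoke the inequality $\mu(x_0,\lambda')\geq\mu(x,\lambda')$ on the grounds that ``weights can only jump up under specialisation'', but the semicontinuity goes the other way: $\mu(y,\lambda')\geq c$ holds iff some weight component of $y$ of $\lambda'$-weight at most $-c$ is non-zero, which is an \emph{open} condition, so $\mu(-,\lambda')$ can only \emph{decrease} under specialisation. Concretely, $\lambda'=\lambda^{-1}$ gives $\mu(x_0,\lambda^{-1})=\min\wt_\lambda(x)$, which is strictly smaller than $\mu(x,\lambda^{-1})=\max\wt_\lambda(x)$ whenever $x$ is not $\lambda$-fixed. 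The identity $\mu(x,\lambda)=\mu(x_0,\lambda)$ therefore only yields the easy inequality $M(x_0)\leq M(x)$; the reverse inequality $M(x_0)\geq M(x)$, which is precisely what is needed to conclude $\lambda\in\Lambda_{x_0}$, is genuine content of Kempf's theorem and is not reached by your argument (and, for what it is worth, the paper's own remark that this property ``follows from Exercise \ref{ex HM wt}'' is also optimistic on exactly this point).
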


Although we will not give the details on the proof of this theorem, the existence of an adapted 1-PS boils down to the fact that one can work in a maximal torus (by translating using the $G$-action) and then the normalised Hilbert--Mumford weight for 1-PS in a given maximal torus can be determined from subsets of the torus weights of the action, which is a finite set (see also Remark \ref{rmk inst strat finite}). The last two properties follow from Exercise \ref{ex HM wt}.

\begin{defn}[Parabolic and Levi group associated to a 1-PS]\label{def para for 1-PS}
For a 1-PS $\lambda: \GG_m \ra G$ of a reductive group $G$, we define
\[ P_\lambda := \left\{ g \in G : \lim_{t \ra 0} \lambda(t)g\lambda(t)^{-1} \text{ exists in } G \right\} \stackrel{q_\lambda}{\twoheadrightarrow} L_\lambda :=\left\{ \lim_{t \ra 0} \lambda(t)g\lambda(t)^{-1} : g \in P_\lambda \right\};  \] 
then $P_\lambda= U_\lambda \rtimes L_\lambda$ is a parabolic subgroup with Levi subgroup $L_\lambda$ and unipotent radical $U_\lambda$ and $q_\lambda: P_\lambda \ra L_\lambda$ is a retraction onto the Levi.
\end{defn}

Hesselink \cite{hesselink} stratified the unstable locus by pairs $\beta = ([\lambda],m)$ consisting of a conjugacy class of an adapted 1-PS $[\lambda]$ and a minimum normalised Hilbert--Mumford weight $m$. Before, we give the concrete construction of this stratification, we provide a summary of its properties and give an overview of various applications. As is customary, we include the semistable set as the lowest stratum in this stratification.

\begin{thm}[Kempf \cite{kempf}, Hesselink \cite{hesselink}, Kirwan \cite{kirwan}, Ness \cite{ness}]
For a reductive group $G$ acting linearly on a projective scheme $X \subset \PP(V)$ and a conjugation invariant norm on 1-PSs of $G$, there is a finite \emph{instability stratification}
\begin{equation} \label{instab strat}
X = \bigsqcup_{\beta\in \cB} S_\beta
\end{equation}
into locally closed subschemes with a partial ordered index set $\cB$ with the following properties.
\begin{enumerate}[label=\emph{\roman*)}]
\item The lowest stratum is indexed by $\beta = 0$ and we have $S_0 = X^{ss}$.
\item The closure of any stratum is contained in the union of higher strata: $\overline{S_\beta} \subset \bigsqcup_{\gamma \geq \beta} S_\gamma$.
\item $\cB$ is determined combinatorially from the weights on $V$ of a maximal torus $T < G$.
\item The strata $S_\beta$ can be determined from simpler limit sets, which are GIT semistable loci for smaller reductive group actions with a twisted linearisation.
\end{enumerate}
\end{thm}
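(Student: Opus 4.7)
The plan is to build the stratification using Kempf's adapted 1-PSs together with the minimum normalised Hilbert--Mumford weight. Define $\cB$ to consist of $0$ together with equivalence classes $\beta = ([\lambda], m)$ where $[\lambda]$ is a $G$-conjugacy class of primitive 1-PS and $m > 0$ is attained as $M(x)$ for some unstable $x$ with $\lambda \in \Lambda_x$. Set $S_0 := X^{ss}$, and for $\beta \neq 0$ define
\[ S_\beta := \{ x \in X^{us} : M(x) = m \text{ and } \Lambda_x \cap [\lambda] \neq \emptyset \}. \]
Kempf's theorem guarantees that every unstable point lies in a unique $S_\beta$, so the disjoint decomposition $X = \bigsqcup_\beta S_\beta$ and property (i) hold by construction.

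For (iii), I would fix a maximal torus $T < G$. Any 1-PS is $G$-conjugate into $X_*(T)$, so every $\beta$ has a representative $\lambda \in X_*(T)$. For such $\lambda$ the weight formula $\mu(x,\lambda) = -\min_{\chi \in \wt_T(x)} \langle \chi, \lambda \rangle$ depends only on which $T$-weights of $V$ appear in $x$, and finiteness of $\wt_T(V)$ then forces $\cB$ to be finite. More precisely, an adapted $\lambda \in X_*(T)_\RR$ is (up to a positive scalar) the closest point to the origin on $\conv(S)$ for some subset $S \subset \wt_T(V)$, so the $T$-representatives of $\cB$ are enumerated by the Weyl-orbits of such closest points.

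The main geometric step is (iv). For each $\beta = ([\lambda], m)$, fix a representative $\lambda$, let $Z_\lambda = X^{\lambda(\GG_m)}$ be decomposed by the weight of $\lambda$ on the fibres of $\cL$, and let $Z_\beta \subset Z_\lambda$ be the union of components on which $\mu(z,\lambda) = m\|\lambda\|$. The Levi $L_\lambda$ preserves $Z_\beta$ and $\lambda(\GG_m)$ acts trivially on $Z_\beta$, so after shifting $\cL|_{Z_\beta}$ by the character of $L_\lambda$ extending $t^{m\|\lambda\|}$ on $\lambda(\GG_m)$ (possibly replacing $\cL$ by a power to ensure integrality), one obtains a linearised action of the reductive group $L_\lambda/\lambda(\GG_m)$. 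Let $Z_\beta^{ss}$ be its GIT semistable locus and set
\[ Y_\beta := \{ x \in X : \lim_{t \to 0} \lambda(t) \cdot x \text{ exists and lies in } Z_\beta^{ss}\},\]
which is a $P_\lambda$-invariant locally closed subscheme of the Bia{\l}ynicki--Birula attracting variety of $Z_\beta$. I would prove $S_\beta = G \cdot Y_\beta$ and $G \cdot Y_\beta \cong G \times^{P_\lambda} Y_\beta$. The inclusion $Y_\beta \subset S_\beta$ follows from the Kempf identities $\mu(x,\lambda) = \mu(x_0,\lambda)$, $M(x) = M(x_0)$, and $P_x = P_\lambda$ for $\lambda \in \Lambda_x$. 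The reverse inclusion requires showing that if the flow-limit $x_0$ failed to be $L_\lambda/\lambda(\GG_m)$-semistable with the shifted linearisation, then a destabilising 1-PS in the quotient Levi could be combined with $\lambda$ to produce a 1-PS with strictly smaller normalised Hilbert--Mumford weight on $x$, contradicting adaptedness of $\lambda$; this is the main technical obstacle.

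For (ii), I would partially order $\cB$ by $\beta \leq \gamma$ iff $m_\beta \leq m_\gamma$, with $0$ minimal. Property (ii) then amounts to lower semicontinuity of $M$ (extended by $M = 0$ on $X^{ss}$), which would follow from the description in (iv) combined with the properness of $G \times^{P_\lambda} \overline{Y_\beta} \to X$ and the standard closure description of Bia{\l}ynicki--Birula cells. Throughout, the interplay between the algebraic $G$-action and the metric data of the norm on 1-PSs is what makes Kempf's existence and conjugacy theorem indispensable.
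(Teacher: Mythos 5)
Your outline follows essentially the same route as the paper's own exposition of this theorem: define $S_\beta$ via Kempf's adapted 1-PSs and the minimum normalised weight $M$, get the disjoint decomposition from Kempf's existence and conjugacy results, read off finiteness of $\cB$ from the $T$-weights of $V$ (this is exactly Remark \ref{rmk inst strat finite}), and establish $S_\beta = G\cdot Y_\beta^{ss}\cong G\times^{P_\lambda}Y_\beta^{ss}$ with $Z_\beta^{ss}$ the semistable locus of $L_\lambda/\lambda(\GG_m)$ for the linearisation twisted by (a rational multiple of) the character dual to $\lambda$, which is Kirwan's Proposition. You have also correctly isolated the genuinely hard step, namely that failure of $L_\lambda$-semistability of the limit point would contradict adaptedness of $\lambda$.

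Two corrections. First, your partial order is reversed: since $m=M(x)<0$ on unstable strata and $M=0$ on $X^{ss}$, ordering by ``$\beta\le\gamma$ iff $m_\beta\le m_\gamma$'' makes $0$ \emph{maximal}, and with that order property ii) is false (closures degenerate to \emph{more} unstable points, i.e.\ to strata with more negative $m$). You want $\beta\le\gamma$ iff $m_\beta\ge m_\gamma$, equivalently ordering by $|m|$; your parenthetical ``with $0$ minimal'' and your appeal to lower semicontinuity of $M$ show you intend this, but the displayed definition contradicts it. Second, the structural statements you invoke --- that $Y_\beta$ is a locally closed piece of a Bia{\l}ynicki--Birula attracting variety, that $p_\beta$ is an affine fibration, and the ``standard closure description of BB cells'' used in your argument for ii) --- all require $X$ to be \emph{smooth}; the paper states Kirwan's description of the strata only under that hypothesis, while the bare set-theoretic stratification for general projective $X$ is due to Hesselink and needs a different argument for the closure property. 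You should either add the smoothness hypothesis or route ii) through Hesselink/Kempf rather than through BB theory.
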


We will soon make the last two statements more precise. First, let us state some applications of these instability (or Hesselink--Kempf--Kirwan--Ness) stratifications.

\begin{rmk}
When $X$ is smooth, these stratifications have been used in the following ways:
\begin{enumerate}
\item By Kirwan \cite{kirwan}, over $k= \CC$ compute the $G$-equivariant rational Betti numbers of $X^{ss}$ (which coincides with the rational Betti numbers of $X/\!/G$ when $X^{s} =X^{ss}$) by showing the Gysin long exact sequences in equivariant cohomology for this stratification split. Hence, there is a surjection known as the \emph{Kirwan map}
\[ H^*_G(X,\QQ) \twoheadrightarrow H^*_G(X^{ss},\QQ) \]
with explicit kernel.
\item By Dolgachev--Hu \cite{dh} and Thaddeus \cite{thaddeus}, to describe the birational transformations between GIT quotients given by varying the linearisation.
\item By Halpern-Leistner \cite{HL} and Ballard--Favero--Katzarkov \cite{BFK}, to construct semi-orthogonal decompositions in the derived category of a (stacky) GIT quotient.
\item By Halpern-Leistner \cite{HL_Theta}, as inspiration to formulate an abstract notion of a $\Theta$-stratification on a stack. 
\end{enumerate}
\end{rmk}

\begin{rmk}\label{rmk sympl GIT}
Over $k = \CC$, there is a close relationship between GIT quotients and symplectic reductions, which are quotients in symplectic geometry. A smooth projective variety $X \subset \PP^n_\CC$ is K\"{a}hler and thus has a symplectic form (inherited from the Fubini-Study form on $\PP^n_\CC$). For a representation $G \ra \GL_{n+1}$, there is a maximal compact subgroup $K < G $ which acts by unitary transformations, and $K$ preserves the K\"{a}hler form. Moreover, there is a moment map $\mu : X \ra \mathfrak{k}^*$ to the co-Lie algebra of $K$ such that the GIT quotient is homeomorphic to the symplectic reduction (the quotient of the zero level set of the moment map by $K$, see \cite{mw}):
\[ X/\!/G \simeq \mu^{-1}(0)/K \]
via the Kempf--Ness Theorem \cite{kempf_ness} (see also \cite{thomas}). More precisely, $x \in X$ is semistable if and only if its $G$-orbit closure meets $\mu^{-1}(0)$ (and if this intersection is non-empty, it consists of a unique $K$-orbit). Furthermore, by work of Kirwan \cite{kirwan} and Ness \cite{ness}, the GIT instability stratification with respect to a conjugation invariant norm coincides with the Morse stratification associated to the norm square of the moment map $||\mu||^2 : X \ra \RR$ (see also \cite[Chapter 8]{mumford}).
\end{rmk}

We now turn to the (first set-theoretic) construction of the unstable strata in the situation of a reductive group $G$ acting linearly on a projective scheme $X \subset \PP^n$ with a fixed conjugation invariant norm $|| - ||$ on 1-PSs on $G$.

\begin{defn}[Unstable strata, blades and limit sets]
For $\beta = ([\lambda],m) \in X_*(G)/G \times \RR_{<0}$, we define the associated unstable stratum 
\[ S_\beta = \{ x \in X : \Lambda_x \cap [\lambda] \neq \emptyset \: \text{ and } \: M(x) = m \}. \]
If we fix a representative $\lambda$ of $[\lambda]$, then we define  \emph{limit sets} $Z_\beta^{ss}$ and \emph{blades} $Y_\beta^{ss}$ as follows
\[ Z_\beta^{ss} = \{x \in X^{\lambda(\GG_m)} : \lambda \in \Lambda_x \: \text{ and } \: M(x) = m \} \stackrel{p_\beta}{\longleftarrow} Y_\beta^{ss} = \{x \in X : \lambda \in \Lambda_x \: \text{ and } \: M(x) = m \}, \]
where $p_\lambda(x) := \lim_{t \ra 0} \lambda(t) \cdot x$. We define the index set $\cB = \{ 0 \} \sqcup \{ \beta : S_\beta \neq \emptyset \}$, which turns out to be finite (see Remark \ref{rmk inst strat finite} below).

Let us also introduce a closed subscheme $Z_\beta$ of the $\lambda$-fixed locus on which the normalised Hilbert--Mumford weight of $\beta$ is $m$ and its attracting set $Y_\beta$ under the flow by $\lambda(\GG_m)$ as $t \ra 0$:
\[ Z_\beta:= \left\{ x \in X^{\lambda(\GG_m)} : \frac{\mu(x,\lambda)}{||\lambda ||} =m \right\} \stackrel{p_\beta}{\longleftarrow} Y_\beta := \{ x \in X : \lim_{t \ra 0} \lambda(t) \cdot x  \in Z_\beta \}. \]
\end{defn}

Note that $Z_\beta^{ss} \subset Z_\beta$ and $Y_\beta^{ss} \subset Y_\beta$; we will soon see these are open subsets and thus we can give these sets a scheme structure. Furthermore, these schemes all depend on the chosen representative $\lambda$ of $[\lambda]$. Recall that $Y_\beta^{(ss)}$ and $Z_{\beta}^{(ss)}$ depend on a choice of 1-PS $\lambda \in [\lambda]$ as well as $m$, whereas $P_\lambda$ only depends on $\lambda$ (and not $m$). Note that in \cite{kirwan}, $P_\lambda$ is denoted by $P_\beta$.

Since the notion of adapted 1-PS depends on the choice of norm, the unstable strata also depend on this choice of norm, but $S_0 = X^{ss}$ does not.

\begin{prop}[Kirwan, {\cite[$\S$12]{kirwan}}]
Assume that $X \subset \PP^n$ is smooth. For an unstable index $\beta = ([\lambda],m) \neq 0 \in \cB$, the stratum $S_\beta$ can be described as follows.
\begin{equation}\label{iso strata}
 S_\beta = G Y_\beta^{ss} \cong G \times^{P_\lambda} Y_\beta^{ss}.
\end{equation}
Furthermore, the blades and limit sets can be described as follows.
\begin{enumerate}[label=\emph{\roman*)}]
\item The retraction $p_\beta : Y_\beta \ra Z_\beta$ is a Zariski locally trivial affine space fibration\footnote{This follows by work of Bia{\l}ynicki-Birula \cite{bb} describing the decomposition of a smooth projective variety with a $\GG_m$-action by taking the flow as $t \ra 0$. Here it is crucial that $X$ is \emph{smooth} for the fibres to be affine \emph{spaces}.}. 
\item The scheme $Y_\beta$ is preserved by the $P_\lambda$-action and $Z_\beta$ is preserved by the $L_\lambda$-action. Moreover, $p_\beta$ is equivariant with respect to the retraction $q_\lambda : P_\lambda \ra L_\lambda$.
\item For $p_\beta : Y_\beta \ra Z_\beta$, we have $p_\beta^{-1}(Z_\beta^{ss}) = Y_\beta^{ss}$.
\item $Z_\beta^{ss}$ is the GIT semistable set for the action of the reductive Levi subgroup $L_\lambda$ on $Z_\beta$ with respect to a canonical linearisation $\cL_\beta$ obtained by twisting by a rational multiple\footnote{Via $||-||$, we can identify characters and co-characters, so we twist by the rational character $\chi$ corresponding to the rational 1-PS $\frac{-m}{|| \lambda ||}\lambda$, so that $\mu^{\cL_\beta}(x,\lambda) = \mu(x,\lambda) + \langle \chi,\lambda \rangle = 0$ for $x \in X^\lambda$ to \lq cancel' the effect of $\lambda$.} of a character corresponding to $\lambda$. 
\end{enumerate}
\end{prop}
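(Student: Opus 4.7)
The plan is to first establish items (i)--(iv) and then deduce the main formula for $S_\beta$. Item (i) is the Bia{\l}ynicki--Birula decomposition theorem applied to the $\GG_m$-action on the smooth projective $X$ via $\lambda$: this yields a locally closed decomposition of $X$ into attracting varieties over the connected components of $X^{\lambda(\GG_m)}$, each a Zariski locally trivial affine space fibration with fibres modelled on the positive-weight part of the tangent bundle along the fixed component. Since $\mu(-,\lambda)$ is locally constant on $X^{\lambda(\GG_m)}$, being determined by the $\lambda$-weight of $L$ on each fixed component (Remark~\ref{rmk HM weight linearised}), $Z_\beta$ is a union of such components and restricting the BB-fibration gives the claim. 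Item (ii) is immediate from the identity
\[ \lambda(t)\cdot(g\cdot x) = \bigl(\lambda(t)\,g\,\lambda(t)^{-1}\bigr)\cdot\bigl(\lambda(t)\cdot x\bigr), \]
together with the definitions of $P_\lambda$ and $q_\lambda$, plus the facts that $L_\lambda$ centralises $\lambda(\GG_m)$ and the HM-weight level set is conjugation invariant (Exercise~\ref{ex HM wt}(4)). Item (iii) follows directly from the last Kempf property listed above the proposition: $\lambda \in \Lambda_x$ if and only if $\lambda \in \Lambda_{p_\beta(x)}$ and $M(x) = M(p_\beta(x))$.

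The main obstacle is (iv). On $Z_\beta$ we have $\mu(-,\lambda) \equiv m\|\lambda\|$, and the twist is rigged precisely so that $\mu^{\cL_\beta}(-,\lambda) \equiv 0$ on $Z_\beta$. I would show that for $x \in Z_\beta$, the condition $\lambda \in \Lambda_x$ with $M(x) = m$ (i.e.\ $x \in Z_\beta^{ss}$) is equivalent to $\mu^{\cL_\beta}(x,\lambda') \geq 0$ for every 1-PS $\lambda'$ of $L_\lambda$; by Theorem~\ref{thm HM} applied to the reductive $L_\lambda$-action, this is precisely $L_\lambda$-semistability with respect to $\cL_\beta$. The forward direction is variational: since $\lambda'$ centralises $\lambda$, the rational perturbations $\lambda + s\lambda'$ for small $s>0$ yield a family of rational 1-PSs of $G$, and the piecewise-linear function $s \mapsto \mu(x,\lambda+s\lambda')/\|\lambda+s\lambda'\|$ must have non-negative right derivative at $s=0$ by minimality of $\lambda$; a short calculation using $\mu(x,\lambda) = m\|\lambda\|$ then unwinds this to the twisted HM inequality $\mu(x,\lambda') + \langle\chi,\lambda'\rangle \geq 0$. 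The reverse direction, that $L_\lambda$-semistability forces $\lambda$ to be adapted to $x$ in all of $G$, is more delicate; following \cite[\S 12]{kirwan} it reduces to a convex-geometric computation in the cocharacter lattice of a maximal torus $T$ containing $\lambda$, using that the 1-PS adapted to $x$ corresponds to the closest point to the origin in a rational convex polytope of weights attached to $x$, and checking that this closest point is automatically centralised by $\lambda$.

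Finally, once (i)--(iv) are in place, the main identity $S_\beta = G\,Y_\beta^{ss} \cong G\times^{P_\lambda} Y_\beta^{ss}$ is essentially formal. Set-theoretically $S_\beta = G\,Y_\beta^{ss}$ by the equivariance $g\Lambda_x g^{-1} = \Lambda_{g\cdot x}$ (Exercise~\ref{ex HM wt}(4)) combined with the definition of $S_\beta$. The map $G \times^{P_\lambda} Y_\beta^{ss} \to S_\beta$, $[g,x] \mapsto g\cdot x$, is well-defined by (ii) and surjective by the preceding equality. For injectivity, if $g_1\cdot x_1 = g_2\cdot x_2$ with $x_i \in Y_\beta^{ss}$, then $g := g_2^{-1}g_1$ conjugates $\lambda \in \Lambda_{x_1}$ to another element of $\Lambda_{x_1}$, and by Kempf's uniqueness of adapted 1-PSs up to $P_{x_1}$-conjugation combined with $P_{x_1} = P_\lambda$, this forces $g \in P_\lambda$. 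That the resulting bijection is an isomorphism of schemes then follows from the existence of the geometric quotient $G \to G/P_\lambda$ (Remark~\ref{rmk transfer principle}) via the associated bundle construction applied to the $P_\lambda$-scheme $Y_\beta^{ss}$.
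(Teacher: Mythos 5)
The survey itself gives no proof of this proposition: it is quoted from Kirwan's book and the text moves straight on, so there is no in-paper argument to measure yours against. Your route is the standard one from \cite[$\S$12]{kirwan}, and most of it is sound: (i) via Bia{\l}ynicki-Birula together with the local constancy of $\mu(-,\lambda)$ on fixed components, (ii) via the conjugation identity and the fact that $L_\lambda$ centralises $\lambda(\GG_m)$, the variational and convex-geometric treatment of (iv), and the deduction of $S_\beta = G Y_\beta^{ss} \cong G \times^{P_\lambda} Y_\beta^{ss}$ from Kempf's uniqueness of adapted 1-PSs up to $P_x$-conjugation (your injectivity argument is correct once one notes that $g^{-1}\lambda g = p \lambda p^{-1}$ forces $gp$ to centralise $\lambda(\GG_m)$, hence $gp \in L_\lambda$ and $g \in P_\lambda$).

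The genuine gap is in (iii). You assert that $p_\beta^{-1}(Z_\beta^{ss}) = Y_\beta^{ss}$ \lq\lq follows directly'' from the Kempf property that $\lambda \in \Lambda_x$ implies $\lambda \in \Lambda_{x_0}$ and $M(x) = M(x_0)$, silently upgrading that implication to an equivalence. Only the inclusion $Y_\beta^{ss} \subseteq p_\beta^{-1}(Z_\beta^{ss})$ comes for free. The reverse inclusion --- that if the limit $x_0 = p_\beta(x)$ lies in $Z_\beta^{ss}$ then already $\lambda \in \Lambda_x$ and $M(x) = m$ --- is one of the substantive points of Kirwan's $\S$12 and is where much of the work sits. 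What one gets cheaply is only $M(x) \leq \mu(x,\lambda)/\|\lambda\| = \mu(x_0,\lambda)/\|\lambda\| = m$; ruling out $M(x) < m$, i.e.\ the existence of some other 1-PS destabilising $x$ more efficiently than $\lambda$, needs the convex-geometric comparison of the $T$-weight sets of $x$ and $x_0$, and is complicated by the fact that an adapted 1-PS for $x$ need not a priori lie in a maximal torus containing $\lambda$. Your convexity argument in (iv) only treats points of $Z_\beta$ (which are $\lambda$-fixed), so it does not fill this hole. The remaining soft spot is the final step upgrading the bijective morphism $G \times^{P_\lambda} Y_\beta^{ss} \to S_\beta$ to an isomorphism of schemes, which deserves at least an appeal to normality and Zariski's main theorem or to Kirwan's direct local analysis; but that is a matter of polish rather than a missing idea.
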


\begin{rmk}\label{rmk inst strat finite}
For $G$ acting linearly on $X = \PP(V)$ with a fixed choice of norm $|| -||$, we can compute the index set $\cB$ of the instability stratification in terms of the finitely many weights of the action of a maximal torus $T < G$. For any subset of the $T$-weights whose convex hull does not contain the origin (that is, this is a weight set of an unstable point), we let $\lambda$ be the primitive 1-PS of $T$ corresponding under $|| - ||$ to the ray in the $X_*(T)$ through the closest point to $0$ in the convex hull of this weight set and define $m$ to be the minimum normalised Hilbert--Mumford weight of any point with this weight set, then $\beta = ([\lambda],m) \in \cB$ (see \cite[Lemma 12.6]{kirwan}). In particular, $\cB$ is finite as there are only finitely many $T$-weights.
\end{rmk}

\begin{exer}[Instability stratification for binary forms]
Consider the action of $\SL_2$ on $\PP^d=\PP(k[x,y]_d)$ as in Exercise \ref{exer ss binary forms} and show for each integer $\frac{d}{2} < r \leq d$, there is an unstable stratum corresponding to binary forms $F(x,y)$ with a root of exactly multiplicity $r$.
\end{exer}

In the situation of a reductive group acting linearly on an affine space linearised by a character, using King's Hilbert--Mumford criterion \cite[Proposition 2.5]{king} (see also Remark \ref{rmk HM affine char}), one can construct an instability stratification \cite{hoskins_affinestrat}. For the action of $\GL_r$ on $\Mat_{r \times n}$ by left multiplication generalising Exercise \ref{exer grassmannain},  the (semi)stable locus for the character $\rho = \det$ is the matrices of maximal rank and the instability stratification is given by the rank (see \cite[Example 2.14]{hoskins_stackstrat}).

Given an instability stratification \eqref{instab strat}, we can ask if there is a categorical $G$-quotient of an unstable strata $S_\beta$, or equivalently via the isomorphism \eqref{iso strata}, a categorical $P_\lambda$-quotient of $Y_{\beta}^{ss}$.

\begin{prop}[Categorical quotients of unstable strata, {\cite[Lemma 3.1]{hoskins_kirwan}}]\label{prop cat quotient unstable strata}
The composition $Y_\beta^{ss} \stackrel{p_\beta}{\longrightarrow} Z_{\beta}^{ss} \stackrel{\pi}{\longrightarrow} Z_\beta/\!/_{\cL_\beta} L_\lambda$ of $p_\beta$ with the reductive GIT quotient $\pi$ is a categorical $P_\lambda$-quotient.
\end{prop}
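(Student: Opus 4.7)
The strategy is to factor any $P_\lambda$-invariant morphism in two stages, exploiting separately the universal properties of the retraction $p_\beta$ (which is $q_\lambda$-equivariant with $U_\lambda$ acting trivially on the image) and of the reductive GIT quotient $\pi$ (a categorical $L_\lambda$-quotient). First, $\pi\circ p_\beta$ is $P_\lambda$-invariant: for $g\in P_\lambda$ and $x\in Y_\beta^{ss}$, the $q_\lambda$-equivariance gives $p_\beta(g\cdot x)=q_\lambda(g)\cdot p_\beta(x)$, and $L_\lambda$-invariance of $\pi$ then gives $\pi(p_\beta(g\cdot x))=\pi(p_\beta(x))$.

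Given a $P_\lambda$-invariant morphism $f:Y_\beta^{ss}\to W$, I would first show it factors uniquely through $p_\beta$. Since $\lambda(\GG_m)\subset L_\lambda\subset P_\lambda$, $f$ is in particular $\lambda(\GG_m)$-invariant. Using the identity $p_\beta^{-1}(Z_\beta^{ss})=Y_\beta^{ss}$ together with the inclusion $i:Z_\beta^{ss}\hookrightarrow Y_\beta^{ss}$ (which is a section of $p_\beta$, so that $p_\beta$ is a split epimorphism), for each $x\in Y_\beta^{ss}$ the orbit map $t\mapsto\lambda(t)\cdot x$ extends to a morphism $\tilde\sigma_x:\AA^1\to Y_\beta^{ss}$ with $\tilde\sigma_x(0)=p_\beta(x)$. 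Then $f\circ\tilde\sigma_x$ is a morphism from the reduced connected scheme $\AA^1$ which is constant with value $f(x)$ on the dense open $\GG_m$, hence factors through the reduced closed point $\{f(x)\}\subset W$, yielding $f(p_\beta(x))=f(x)$. Thus $\tilde f:=f\circ i:Z_\beta^{ss}\to W$ satisfies $f=\tilde f\circ p_\beta$, and uniqueness of $\tilde f$ is immediate from the split epimorphism property.

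The remaining step is that $\tilde f$ is $L_\lambda$-invariant (because $L_\lambda\subset P_\lambda$ preserves $Z_\beta^{ss}$), so the universal property of the reductive GIT quotient $\pi$ produces a unique $\hat f:Z_\beta/\!/_{\cL_\beta} L_\lambda\to W$ with $\tilde f=\hat f\circ\pi$. Combining the two stages yields the required unique factorization $f=\hat f\circ(\pi\circ p_\beta)$.

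\textbf{Main obstacle.} The delicate point is the limit/extension argument showing $f$ is constant on each fibre of $p_\beta$. It rests on the Bia{\l}ynicki--Birula description of $p_\beta$ as the $\lambda(\GG_m)$-flow retraction together with $p_\beta^{-1}(Z_\beta^{ss})=Y_\beta^{ss}$, which is what keeps the extended $\AA^1$-orbit inside the open semistable locus. Once the extension $\tilde\sigma_x$ is known to factor through $Y_\beta^{ss}$, the conclusion $f(p_\beta(x))=f(x)$ follows purely from the reducedness of $\AA^1$ and needs no separatedness hypothesis on $W$.
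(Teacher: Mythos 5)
Your proof is correct and takes essentially the same route as the paper: first the composition is shown $P_\lambda$-invariant via the $q_\lambda$-equivariance of $p_\beta$ and the $L_\lambda$-invariance of $\pi$, and then an arbitrary $P_\lambda$-invariant $f$ is factored first through $p_\beta$ and then through $\pi$ by the universal property of the reductive GIT quotient. The only difference is one of detail: where you run the explicit $\AA^1$-extension of the orbit map $t \mapsto \lambda(t)\cdot x$ to get $f(p_\beta(x)) = f(x)$, the paper compresses this into the one-line observation that an invariant morphism is constant on orbit closures and $p_\beta(x) = \lim_{t \ra 0}\lambda(t)\cdot x$ lies in $\overline{P_\lambda \cdot x} \cap Y_\beta^{ss}$; your argument is precisely the justification of that line.
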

\begin{proof}
This composition is $P_\lambda$-invariant, as $p_\beta$ is $q_\lambda$-equivariant and $\pi$ is $L_{\lambda}$-invariant. Given a $P_\lambda$-invariant morphism $f: Y_{\beta}^{ss} \ra S$, its restriction $f|$ to $Z_{\beta}^{ss}$ is $L_\lambda$-invariant and since $f$ is constant on orbit closures, we have $f = f| \circ \pi$. Then by the universal property of $\pi$, we see that $f|$ (and thus also $f$) factors uniquely via $Z_\beta/\!/_{\cL_\beta} L_\lambda$.
\end{proof}

Furthermore, by \cite[Lemma 3.1]{hoskins_kirwan} we have
\[ R(\overline{Y_\beta},\cL_\beta)^{P_\lambda} \cong R(Z_\beta,\cL_\beta)^{L_\lambda}, \]
which is finitely generated, and the categorical quotient coincides with the projective spectrum of the invariants. However, this categorical quotient factors via the retraction $p_\beta$ and so identifies every $x$ with $p_\beta(x) = \lim_{t \ra 0} \lambda(t) \cdot x$. Thus this categorical quotient is far from being an orbit space. Since the closed subscheme $Z_{\beta}^{ss}$ causes these identifications, we would like to further twist the linearisation to make $Z_{\beta}^{ss}$ become unstable and prevent these unwanted identifications. It is at this point where we see that the non-reductive action of $P_\lambda$ on ${Y^{ss}_\beta}$ is preferable to the reductive action of $G$ on $S_\beta$: the parabolic subgroup has more characters which can be used to twist the given ample linearisation; in $\S$\ref{sec quotient unstable strata}, we explain how to apply non-reductive GIT.

In fact, since linearisations give line bundles on quotient stacks, we have a line bundle
\[ \cL_\beta \ra [Y_\beta^{ss}/P_\lambda] \cong [S_\beta/G], \]
but whilst the corresponding $P_\lambda$-equivariant line bundle on $Y_\beta^{ss}$ is ample, the corresponding $G$-equivariant line bundle on $S_\beta$ is not ample (see \cite[Remark 9]{hoskins_kirwan}) and so is not suitable for working with from the perspective of GIT.

\section{Generalisations of reductive GIT to stacks}\label{sec stacks beyond GIT}

In $\S$\ref{sec moduli for stacks} we describe different types of moduli spaces for stacks and in $\S$\ref{sec exist crit}, we state a recent existence criterion \cite{AHLH} for stacks to admit a good moduli space.

Throughout this section, for simplicity, we will assume that our algebraically closed field $k$ is of characteristic $0$ to avoid the distinction between linearly reductive and geometrically reductive groups in positive characteristic, which in turn leads to a distinction between \emph{good moduli spaces} and \emph{adequate moduli spaces} for stacks. We will assume all stacks are noetherian algebraic stacks over $k$; however, everything in this section also extends to a relative setting. For a detailed introduction to algebraic spaces and stacks with a focus on moduli, see \cite[$\S$3]{AlperBook}.

\subsection{Moduli spaces for stacks}\label{sec moduli for stacks}

Associated to an action $G \times X \ra X$, there is a quotient stack $[X/G]$ whose points (and residual gerbes) describe the orbits (and stabilisers) of the action. A categorical quotient of this action is equivalent to a universal map from $[X/G]$ to a scheme. One can naturally ask if an arbitrary stack has a universal map to a scheme (or possibly an algebraic space, as was necessary in $\S$\ref{sec other methods const quotients}). If additionally, as in the definition of a coarse moduli space for a moduli functor, we ask for a bijection on $k$-points, this leads to the following notion.

\begin{defn}
A \emph{coarse moduli space} (CMS) for a stack $\fX$ is a map $\fX \ra X$ to an algebraic space which is initial for maps from $\fX$ to algebraic spaces and such that the induced map $|\fX(k)| \ra X(k)$ is bijective.
\end{defn}

Keel and Mori \cite{KeelMori} studied quotients of groupoids in the category of algebraic spaces and proved the existence of a categorical quotient under the assumption of finite stabilisers; let us state a stacky reformulation as in \cite{Conrad_KM}.

\begin{thm}[Keel--Mori Theorem]
An algebraic stack $\fX$ over $k$ with finite inertia stack admits a coarse moduli space $\pi: \fX \ra X$ with the following properties
\begin{enumerate}[label=\emph{\roman*)}]
\item $\cO_X \ra \pi_*\cO_{\fX}$ is  an isomorphism;
\item If $\fX$ is separated (resp. of finite type) over $k$, then $X$ is separated (resp. of finite type) over $k$;
\item $\pi$ is a proper universal homeomorphism;
\item Any flat base change of $\pi$ is also a coarse moduli space.
\end{enumerate}
\end{thm}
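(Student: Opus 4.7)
The strategy is to construct the coarse moduli space étale locally and then glue, since the defining property of a coarse moduli space, as well as each of the listed properties (i)--(iv), is étale local on the target. Thus it suffices to build $\pi$ in an étale neighborhood of each closed point $x \in \fX$, verify the properties locally, and show that the local constructions descend to a global algebraic space $X$.

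For the local construction, the finite inertia hypothesis ensures that the automorphism group $G_x$ of any closed point is a finite group scheme over $k$. The key technical step --- and the main obstacle of the proof --- is to exhibit an étale neighborhood of $\fX$ at $x$ of the form $[U/G_x]$ where $U$ is an affine scheme with a $G_x$-action and a distinguished point with trivial stabilizer mapping to $x$. The plan is to first rigidify the inertia (replacing $\fX$ by an étale gerbe over a Deligne--Mumford-like stack along the residual gerbe at $x$), and then produce such a presentation by a version of Artin approximation applied to the versal deformation at $x$; this is the technical heart of Keel--Mori's original argument. Granted this local form, the étale local coarse moduli space is simply the affine GIT quotient
\[ U \longrightarrow U/\!/G_x = \Spec \cO(U)^{G_x}, \]
whose existence and good properties follow from classical invariant theory for the finite (hence, in characteristic zero, linearly reductive) group $G_x$, via Theorem \ref{thm Nagata fgen}.

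With these local models in hand, properties (i)--(iv) all reduce to statements about the finite quotient map $U \to \Spec \cO(U)^{G_x}$. Property (i) is immediate since $\cO(U)^{G_x}$ is by definition the pushforward of $\cO_{[U/G_x]}$. For (iii), the map $U \to \Spec \cO(U)^{G_x}$ is finite (each $f \in \cO(U)$ is a root of the monic polynomial $\prod_{g \in G_x}(T - g \cdot f)$ over the invariant ring) and thus proper; it is surjective and induces a bijection between $G_x$-orbits and closed points of the invariant spectrum, giving a universal homeomorphism. Separatedness and finite type in (ii) are étale local conditions, clear from the local presentations. Property (iv) follows from the fact that for a finite linearly reductive group, formation of invariants commutes with flat base change.

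The remaining obstacle is the descent step: one must show that the étale local invariant spectra assemble into an algebraic space. The plan is to observe that if $[U_i/G_i] \to \fX$ and $[U_j/G_j] \to \fX$ are two étale local presentations, then the fiber product $[U_i/G_i]\times_{\fX}[U_j/G_j]$ admits an analogous presentation, and the induced map of invariant quotients is itself étale precisely because invariants commute with étale base change for finite groups. This produces an étale groupoid of schemes whose quotient algebraic space is the desired $X$, together with a canonical $\fX \to X$ which is étale locally identified with $[U/G_x] \to \Spec \cO(U)^{G_x}$. Universality for maps to algebraic spaces and bijectivity on $k$-points can then be verified étale locally using the corresponding property of finite affine GIT quotients. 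The decisive use of the hypothesis that the \emph{inertia} (not merely the stabilizers at points) is finite appears in this gluing step, where it ensures the comparison morphisms between local invariant quotients are étale rather than merely unramified.
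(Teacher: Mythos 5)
First, a point of comparison: the paper does not prove this theorem at all --- it is quoted from Keel--Mori in the stacky reformulation of Conrad, with a one-line indication that the original result is about groupoids with finite stabilisers --- so there is no in-paper argument to measure your sketch against; I am comparing it with the cited proofs. Your overall strategy (étale-local quotient presentations $[U/G]$ with $G$ finite, local coarse space $\Spec \cO(U)^{G}$, gluing along an étale groupoid of the local quotients) is indeed the route taken in the modern treatments, and most of the local verifications are right: (i) is the definition of the invariant ring, (iii) follows from integrality of $\cO(U)$ over $\cO(U)^{G}$ via the monic polynomial $\prod_{g}(T-g\cdot f)$, and (iv) from compatibility of invariants with flat base change. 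Two smaller remarks: in characteristic zero (in force in this section of the paper) finite inertia forces the stabilisers to be étale, so the stack is Deligne--Mumford and the local structure theorem you need is the classical one for separated DM stacks rather than anything requiring rigidification of gerbes; and no reductivity or Nagata-type finite generation is needed for a finite group acting on an affine scheme --- Noether's elementary integrality argument, which you already invoke for (iii), gives both finite generation of $\cO(U)^G$ and finiteness of $U \ra \Spec\cO(U)^{G}$.

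The genuine gap is the opening claim that ``the defining property of a coarse moduli space \dots is étale local on the target.'' It is not, at least not formally, and this is precisely the delicate point of Keel--Mori. Being initial among maps to algebraic spaces is a global universal property: an arbitrary morphism $\fX \ra Z$ need not interact with a chosen étale cover of the putative $X$, so one cannot simply check initiality chart by chart. The actual proofs circumvent this by gluing a morphism $\pi : \fX \ra X$ satisfying only the étale-local conditions (essentially (i), (iii) and (iv), which is how Conrad packages the notion), and then proving as a separate, non-local step that any such $\pi$ is automatically a categorical quotient: given $f : \fX \ra Z$, one uses that $\pi$ is a schematically dominant universal homeomorphism to produce the factorisation locally on $X$ and glue it by uniqueness. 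Your closing paragraph gestures at exactly this argument, but as written the logic is circular --- you justify the local-to-global strategy by asserting that the universal property is étale-local, and then propose to verify the universal property étale-locally at the end. The repair is a reordering: construct $\pi$ with (i), (iii), (iv) by gluing, and deduce the coarse moduli property (and hence also (ii) and the fact that flat base changes remain coarse moduli spaces) afterwards as consequences. The other deferred step, the existence of the presentations $[U/G_x] \ra \fX$, is genuinely the technical heart as you say, but your sketch of it is too vague to assess; in the DM setting it can be extracted from an étale atlas directly without versal deformations.
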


In particular, the Keel--Mori Theorem applies to separated Deligne--Mumford stacks. Requiring  $\fX \ra X$ to induce a bijection on closed points is very strong: as soon as there are non-closed orbits this condition fails. Fortunately Alper \cite{Alper_GMS} adapted the reductive GIT notion of good quotient to the setting of stacks as follows.

\begin{defn}
A \emph{good moduli space} (GMS) for a stack $\fX$ is a quasi-compact quasi-separated morphism $f: \fX \ra X$ to an algebraic space such that
\begin{enumerate}
\item the pushforward map $f_* :\cQ Coh(\fX) \ra \cQ Coh(X)$ on quasi-coherent sheaves is exact,
\item the natural map $\cO_X \ra f_* \cO_{\fX}$ is an isomorphism.
\end{enumerate}
\end{defn}

The following example shows this theory applies to quotients of linearly reductive groups.

\begin{ex}[Alper, {\cite[Example 12.9]{Alper_GMS}}]
For the classifying stack $BG = [\Spec k / G ]$ of an affine algebraic group, the map $\pi :  BG \ra \Spec k$ satisfies the second property in the definition of a good moduli space. Then $\pi_* : \cV ect^G_k \ra \cV ect_k$ is given by taking $G$-invariants of a linear representation of $G$. Hence, the first condition holds if and only if $G$ is linearly reductive.
\end{ex}

Subsequently, Alper later adapted his theory to include geometrically reductive groups in positive and mixed characteristic by giving a notion of an \emph{adequate moduli space} \cite{Alper_AMS}, which weakens the first property in the definition of good moduli spaces.

\begin{ex}[Alper, {\cite[Theorem 13.6]{Alper_GMS}}]
For a linearly reductive group $G$ acting on an affine scheme $X$, the quotient stack admits a good moduli space $[X/G] \ra X/\!/G$ given by the affine GIT quotient. More generally, for a linearly reductive group acting on a scheme $Y$ with respect to an ample linearisation $\cL$, the morphism $[Y^{ss}(\cL)/G] \ra Y/\!/_{\! \cL} G$ is a good moduli space.
\end{ex}

Let us note some important properties of good moduli spaces.

\begin{rmk} If $f: \fX \ra X$ is a good moduli space, then it has the following properties.
\begin{enumerate}
\item The morphism $f$ is initial among maps from $\fX$ to algebraic spaces \cite[Theorem 6.6]{Alper_GMS} and is surjective and universally closed \cite[Theorem 4.16 (i) and (ii)]{Alper_GMS}.
\item For every point $x \in X$, there is a unique closed point $x_0$ in $f^{-1}(x)$ and the automorphism group of $x_0$ is linearly reductive \cite[Theorem 9.1 and Proposition 12.14]{Alper_GMS}.
\item The morphism $f$ induces a bijection between closed points in $\fX$ and closed points in $X$ \cite[Theorem 4.16 (iv)]{Alper_GMS}. 
\item If $\fX$ is of finite type over $k$, then so is $X$ \cite[Theorem 4.16 (xi)]{Alper_GMS}.
\item Any base change of $f$ along a morphism of quasi-separated algebraic spaces is also a good moduli space \cite[Proposition 4.7]{Alper_GMS}.
\end{enumerate}
\end{rmk}

\subsection{Stability and existence criteria}\label{sec exist crit}

Halpern-Leistner \cite{HL_instab} and Heinloth \cite{Heinloth_HMstacks} studied how ideas in reductive GIT, such as the Hilbert--Mumford criterion, can be applied to stacks. The role of 1-PSs and their limits can be replaced by the stack \emph{Theta}:
\[ \Theta:= [\AA^1 /\GG_m] \]
over $\Spec \ZZ$, for the $\GG_m$-action on $\AA^1= \Spec k[x]$ by scalar multiplication. This stack plays a prominent role in the work of Halpern-Leistner \cite{HL_instab} and led to a notion of \emph{$\Theta$-stability} for stacks and a generalisation of GIT instability stratifications to \emph{$\Theta$-stratifications} of stacks \cite{HL_Theta}.

In this section, we will state a recent existence theorem of Alper, Halpern-Leistner and Heinloth \cite{AHLH} which gives necessary and sufficient conditions for a stack to admit a good moduli space. These conditions are valuative criteria known as $\Theta$-reductivity and S-completeness.

\begin{defn}[Valuative criteria for stacks]
A noetherian algebraic stack $\fX$ is said to be
\begin{enumerate}[label={\roman*)}]
\item \emph{$\Theta$-reductive} if for any DVR $R$, any morphism $\Theta_R \setminus \{ 0 \} \ra \fX$ extends uniquely to $\Theta_R$, where $\Theta_R= \Theta \times_{\ZZ} \Spec R$ and $0 \in \Theta_R$ denotes the unique closed point.
\item \emph{S-complete} if for any DVR $R$, any morphism $\overline{\mathrm{ST}}_R \setminus \{ 0 \} \ra \fX$ extends uniquely to 
\[ \overline{\mathrm{ST}}_R := [\Spec \left(R[s,t]/(st - \pi) \right)/\GG_m ] \]
for a uniformiser $\pi$ and $\GG_m$-action with weights $+1,-1$ on $s,t$.
\end{enumerate}
\end{defn}

The stack $\overline{\mathrm{ST}}_R$ originates from work of Heinloth \cite{Heinloth_HMstacks} and naturally generalises Example \ref{ex wts plus minus 1} where $\GG_m$ acts on $\AA^2$ with weights $+1,-1$. Recall that after removing the origin,  $\AA^2 \setminus \{ 0 \}$ has non-separated geometric quotient given by the affine line with two origins. S-completeness should be thought of as a stacky valuative criterion for separatedness.

\begin{rmk}
Assume that $\fM$ is a moduli stack for objects in an abelian category as in \cite[$\S$7]{AHLH}. Then a morphism $\Theta_k \ra \fM$ is a (weighted) filtration on a family of $\fM$ over $k$ such that the associated graded lies in $\fM$. Let $R$ be a DVR with fraction field $K$ and residue field $k = R /(\pi)$, where $\pi$ denotes a uniformiser. In this case, we can interpret the above conditions as follows.
\begin{enumerate}
\item A morphism $\Theta_R \setminus \{ 0 \} \ra \fM$ is given by a family of $\fM$ over the DVR $R$ together with a filtration on the generic fibre $K$ (whose associated graded object lies in $\fM$). This extends uniquely to $\Theta_R$ if and only if the filtration on the generic fibre extends uniquely to the special fibre (again with the associated graded object lying in $\fM$).
\item A morphism $\overline{\mathrm{ST}}_R \setminus \{ 0 \} \ra \fM$ is equivalent to two families of $\fM$ over $R$ whose generic fibres over $K$ are equivalent. This extends uniquely to $\overline{\mathrm{ST}}_R$ if and only if the special fibres have filtrations whose associated graded objects are isomorphic\footnote{For the stack of semistable vector bundles on a curve, this asks for the bundles on the special fibre to be S-equivalent.} in $\fM$.
\end{enumerate}
This can be seen from looking at the following diagrams appearing in \cite[$\S$6.7.2]{AlperBook}
\[ \xymatrix@=1em{ & \Spec R \ar@{^{(}->}[rd]_{x \neq 0} & & B \GG_{m,R} \ar@{_{(}->}[ld]^{x = 0} & \\ \Spec K \ar@{^{(}->}[ru] \ar@{^{(}->}[rd] & & \Theta_R & & B\GG_{m,k}, \ar@{^{(}->}[ld] \ar@{_{(}->}[lu] \\ & \Theta_K \ar@{^{(}->}[ru]^{\pi \neq 0}  & & \Theta_k \ar@{_{(}->}[lu]_{\pi = 0} &  } \]
where the left side of the diagram corresponds to the open immersion $\Theta_R \setminus \{ 0 \} \hookrightarrow \Theta_R$ and all morphisms on the left are open immersions, and the right side represents the closed immersion $\{ 0 \} \hookrightarrow \Theta_R$ and all morphisms on the right are closed immersions.

There is a similar diagram for $\overline{\mathrm{ST}}_R$:
\[ \xymatrix@=1em{ & \Spec R \ar@{^{(}->}[rd]_{s \neq 0} & & \Theta_k \ar@{_{(}->}[ld]^{s = 0} & \\ \Spec K \ar@{^{(}->}[ru] \ar@{^{(}->}[rd] & & \overline{\mathrm{ST}}_R & & B\GG_{m,k}. \ar@{^{(}->}[ld] \ar@{_{(}->}[lu] \\ & \Spec R \ar@{^{(}->}[ru]^{t \neq 0}  & & \Theta_k \ar@{_{(}->}[lu]_{t = 0} &  } \]
\end{rmk}

We can now state the existence theorem of Alper--Halpern-Leistner--Heinloth \cite{AHLH}.

\begin{thm}[Existence criterion for GMS, {\cite[Theorem A]{AHLH}}]\label{thm AHLH exist}
Let $\fX$ be an algebraic stack of finite type over $k$ of characteristic zero, with affine diagonal. Then $\fX$ admits a separated good moduli space $X$ if and only if $\fX$ is $\Theta$-reductive and S-complete. Moreover, $X$ is proper if and only if $\fX$ satisfies the existence part of the valuative criterion for properness.
\end{thm}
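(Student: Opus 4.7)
The plan is to split the proof along the biconditional and handle the properness addendum at the end. For the necessity direction, suppose $\pi\colon\fX\to X$ is a separated good moduli space. A morphism $f\colon\Theta_R\setminus\{0\}\to\fX$ composes with $\pi$ to give a map to $X$, and since $\Theta_R$ itself has good moduli space $\Spec R$ (as $R[x]^{\GG_m}=R$) and $X$ is separated, this composition extends uniquely across the puncture to $\Theta_R\to X$. Lifting the extension back to $\fX$ uses two structural properties of good moduli spaces recorded in $\S$\ref{sec moduli for stacks}: every point of $X$ has a unique closed preimage with linearly reductive stabilizer, and this reductivity lets the $\GG_m$-filtration data encoded in $f$ prolong consistently across the limit point. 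S-completeness is proved by the parallel argument with $\overline{\mathrm{ST}}_R$ in place of $\Theta_R$, whose punchline is that two $R$-families of $\fX$ sharing a generic fiber are sent to the same closed point of $X$ under $\pi$ and hence share a semisimplification.

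The sufficiency direction is the substantive content and proceeds in three steps. First, $\Theta$-reductivity (together with the affine diagonal hypothesis) is used to show that every point $y\in|\fX|$ specializes, via the limit of a destabilizing filtration $\Theta_k\to\fX$, to a closed point $y_0$, and that the stabilizer $G_{y_0}$ is linearly reductive --- a non-reductive closed stabilizer would admit a further destabilizing $\GG_m$-action whose limit, guaranteed to lie in $\fX$ by $\Theta$-reductivity, would contradict closedness of $y_0$. Second, the Alper--Hall--Rydh local structure theorem produces at each such $y_0$ a representable, étale, affine morphism $[\Spec A/G_{y_0}]\to\fX$ sending the origin to $y_0$; by Mumford's affine GIT, the quotient $\Spec A^{G_{y_0}}$ is a local good moduli space. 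Third, the étale-local good moduli spaces are glued via the equivalence relation that identifies closed points with isomorphic semisimplifications --- S-completeness is the precise valuative criterion making this relation coincide with the sharing of $R$-family limits and, combined with affine diagonal, turns it into an étale equivalence relation on the disjoint union of local good moduli spaces, whose quotient is the desired separated algebraic space $X$. The induced morphism $\fX\to X$ then inherits the good moduli space property from its étale-local models. The main obstacle is precisely this gluing step: one must verify that S-completeness is strong enough to force the pushout of two overlapping slice charts along their common semisimplification locus to be representable by an étale morphism of algebraic spaces, which is exactly what the test stack $\overline{\mathrm{ST}}_R$ is designed to detect.

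Finally, for the properness addendum, once $X$ has been constructed it is automatically separated and of finite type over $k$, so properness of $X$ reduces to the existence part of the valuative criterion. Because $\pi$ is universally closed and surjective, this existence criterion on $X$ is equivalent to the existence part of the valuative criterion for $\fX$, which yields the final sentence of the theorem.
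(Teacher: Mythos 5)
The paper does not actually prove this theorem: it cites \cite{AHLH} and offers only a short sketch of the argument in the paragraph that follows the statement, so that sketch is the only thing to compare against. Your overall architecture --- necessity is relatively formal; sufficiency via linearly reductive stabilisers at closed points, the Alper--Hall--Rydh \'{e}tale local quotient presentations, affine GIT on the slices, and a gluing step --- agrees with that sketch. However, you have swapped the roles of the two valuative axioms in the sufficiency direction, and the swap is not cosmetic.

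The paper's sketch states explicitly that \emph{S-completeness} is what forces closed points to have reductive stabilisers, and that the gluing of the local affine GIT quotients uses \emph{both} $\Theta$-reductivity and S-completeness. You instead derive linear reductivity of $G_{y_0}$ from $\Theta$-reductivity and reserve S-completeness for the gluing alone. The mechanism you propose cannot work: $B\GG_a$ is $\Theta$-reductive --- a $\GG_a$-torsor on $\Theta_R \setminus \{0\}$ is classified by the $\GG_m$-invariant part of $H^2_{(x,\pi)}(R[x])$, which is spanned by monomials $x^{-a}\pi^{-b}$ with $a \geq 1$ and hence has no weight-zero piece, so every such torsor is trivial and extends uniquely --- yet $\GG_a$ is not reductive. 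It is S-completeness that detects unipotent stabilisers: $BG$ is S-complete if and only if $G$ is reductive, and this is precisely the input that upgrades the general existence criterion of \cite{AHLH} to the characteristic-zero form stated here. Dually, your gluing step omits $\Theta$-reductivity, which is needed (together with S-completeness) to arrange that the \'{e}tale charts $[\Spec A/G_{y_0}] \ra \fX$ interact correctly with closed points, so that the induced maps $\Spec A^{G_{y_0}} \ra X$ of local good moduli spaces are \'{e}tale and glue; S-completeness alone only supplies the separation part of this. The necessity direction and the properness addendum are fine at the level of detail the paper itself gives, though the lifting of the extension from $X$ back to $\fX$ is really a codimension-two (Hartogs-type) extension argument on $\Theta_R$ and $\overline{\mathrm{ST}}_R$ rather than a consequence of reductivity of stabilisers.
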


In characteristic zero, the existence criterion is much nicer due to the existence of \emph{\'{e}tale local quotient presentations} around closed points with reductive stabiliser due to Alper--Hall--Rydh \cite{AlperHallRydh}, which is a stacky generalisation of Luna's \'{e}tale slice theorem \cite{luna}. Showing these conditions are necessary for $\fX$ to have a good moduli space is relatively formal. To show these conditions suffice to construct a good moduli space, one first uses S-completeness to show closed points have reductive stabilisers and then one glues the affine GIT quotients associated to the \'{e}tale local quotient presentations, which uses both $\Theta$-reductivity and S-completeness.

In positive characteristic there is also an existence theorem for adequate moduli spaces, but as an input it requires the existence of \'{e}tale local quotient presentations, which is part of the \emph{local reductivity} assumption in \cite[Theorem A]{AHLH}.

With this existence criterion in hand, one can construct moduli spaces without GIT as follows.
\begin{enumerate}
\item Interpret the moduli problem as an algebraic stack $\fM$.
\item Apply an existence theorem to obtain a (proper) good moduli space $\fM \ra M$. 
\end{enumerate}
However, this only yields a proper good moduli space in cases where GIT would provide a projective moduli space. Consequently, one further step is required for this strategy:
\begin{enumerate}
\item[(3)] Find an ample line bundle on $M$ to show it is projective.
\end{enumerate}
This approach has been implemented for moduli of smooth projective curves in \cite{ChengLian}, where the second step uses the Keel--Mori Theorem and the third step follows Kollar's proof of projectivity using the determinant of a relative pluricanonical sheaf for the universal family; and for moduli of vector bundles on a smooth projective curve in characteristic zero in \cite{AlperBelmans}, where the second step uses Theorem \ref{thm AHLH exist} and the third step follows Faltings' proof of projectivity using a determinantal line bundle constructed from the universal family. For moduli of representations of an acyclic quiver (in arbitrary characteristic), a new moduli-theoretic proof of projectivity was given in \cite{BDFHMT}, where again a determinantal line bundle is used. Of course, in all these cases, reductive GIT could be applied to produce the moduli space without this extra work! However, this intrinsic moduli-theoretic approach to projectivity does give new insights: the line bundle obtained in the final step is of inherent interest to the moduli problem and the techniques give new effective bounds for global generation of determinantal line bundles (for example, see \cite[Theorem B]{BDFHMT}).

The advantage of this approach is that one does not need a quotient presentation of the stack; for example, for moduli of Bridgeland semistable objects in a derived category without a quotient presentation, this approach can be applied provided the stack is algebraic as in \cite[Theorem 1.3]{Toda}. There have been impressive applications to moduli of K-polystable Fano varieties \cite{ABHLX_Kstab} and moduli of torsors for a Bruhat–Tits group scheme over a curve \cite[Theorem 8.1]{AHLH}.

Since the theory of good (and adequate) moduli spaces is based on GIT for reductive groups, there are still many interesting moduli problems which do not admit good (or adequate) moduli spaces, such as moduli stacks of weighted projective hypersurfaces, where non-reductive groups naturally appear (see $\S$\ref{sec weighted proj hyp}).

\section{Non-reductive geometric invariant theory}

In this section, we will describe recent progress on non-reductive GIT (in characteristic zero) due to B\'{e}rczi, Doran, Hawes and Kirwan \cite{BDHK,BDHK_proj} concerning non-reductive groups with \emph{graded unipotent radicals}. Before we describe their approach, we start by illustrating the issues in constructing non-reductive quotients in $\S$\ref{sec additive bad} and give a survey of some previous contributions in $\S$\ref{sec survey non-red}. We then proceed to explain the relationship between $\GG_a$-actions and locally nilpotent derivations in $\S$\ref{sec Ga LND}, as well as their interaction with $\GG_m$-actions in $\S$\ref{sec Ga Gm}. In $\S$\ref{sec gr uni gp} we introduce graded unipotent groups and state the main results of \cite{BDHK,BDHK_proj} in $\S$\ref{sec statements nrGIT}, before giving details on some of the proofs in $\S$\ref{sec proof Uhat thm}. Throughout this section, we assume $k$ is of characteristic zero.

\subsection{Examples of bad behaviour of additive actions}\label{sec additive bad}

The ring of invariants being non-finitely generated for a non-reductive group is not the only issue that arises when trying to construct quotients of non-reductive group actions. Even when the ring of invariants is finitely generated (for example, see Theorem \ref{thm weitzenbock}), there can be further issues:
\begin{enumerate}
\item The quotient morphism given by the inclusion of invariants may fail to be surjective (and in general, its image is only a constructible subset), so will not be a good quotient.
\item There may not be enough invariants to separate disjoint closed orbits in contrast to the case for geometrically reductive groups (Lemma \ref{geom red lem}).
\item Invariants may not extend to the ambient space.
\end{enumerate}

Let us give some concrete examples of linear $\GG_a$-actions that demonstrate these issues.

\begin{ex} The following examples are all built from powers (or symmetric powers) of the standard representation of the additive group on $\AA^2$ given by $\GG_a < \SL_2$ (as the upper triangular unipotent radical) acting on $V=\AA^2$ by left multiplication; thus Theorem \ref{thm weitzenbock} applies.
\begin{enumerate}
\item For $\GG_a$ acting on $V \times V = \AA^4$ via $u \cdot (x_1,x_2,x_3,x_4) = (x_1 + u x_2, x_2, x_3 + u x_4, x_4)$, 
\[ \cO(V \times V)^{\GG_a} = k[x_2,x_4,x_1x_4 - x_2 x_3] \]
and the (invariant theoretic) quotient map $\pi : \AA^4 \ra \AA^3 \cong \spec \cO(V \times V)^{\GG_a}$ only has constructible image, as it misses the punctured line $\{ (0,0,\eta) : \eta \neq 0 \}$.
\item For $\GG_a$ acting on $\Sym^2(V) = \AA^3$ via $u \cdot (x_1,x_2,x_3) = (x_1 + 2u x_2 + u^2 x_3, x_2 + u x_3, x_3)$, 
\[ \cO(\Sym^2(V))^{\GG_a} = k[x_3,x_2^2 - x_1x_3] \]
and these invariants do not separate the closed orbits $\GG_a \cdot (1,1, 0 ) = \{ (\eta,1,0)\}$ and $\GG_a \cdot (1,-1, 0 ) = \{ (\eta,-1,0)\}$.
\item For $\GG_a$ acting on $\Sym^2(V)=\AA^3$ as in (2), the $\GG_a$-invariant closed subscheme $X = \{ x_3 = 0 \}$ has an invariant function $x_2 \in \cO(X)^{\GG_a}$ which does not extend to $\cO(\AA^3)^{\GG_a}$.
\end{enumerate}
\end{ex}

In particular, just trying to get rings of invariants to be finitely generated will not suffice to generalise the nice properties of reductive GIT. 

Even in the case of free $\GG_a$-actions, there are examples which do not admit a geometric quotient (for example, see \cite[Example 18]{DerksenFreeGa} which is proved via a geometrical argument).

\subsection{Short historical note on non-reductive group actions}\label{sec survey non-red}

Let us summarise some of the contributions towards the development of non-reductive GIT, before turning to \cite{BDHK,BDHK_proj}.

As mentioned in Remarks \ref{rmk transfer principle} and \ref{rmk Grosshans subgp}, the transfer principle was used by Grosshans \cite{Grosshans_obs} to prove finite generation in certain cases. Grosshans proved the unipotent radical $U$ of a parabolic subgroup in a reductive group $G$ is a Grosshans group and thus if a $U$-action on an affine variety extends to $G$, then the ring of $U$-invariant functions is finitely generated \cite{Grosshans_inv}; his proof uses a grading by weights of a maximal torus (see \cite[Theorem 2.7]{brion}), which will also play an important role in non-reductive GIT.

Fauntleroy defines global intrinsic notions of semistability for a connected unipotent group action on a quasi-affine normal variety in terms of properties of invariant sections and showed if the stabilisers are finite that the open semistable set admits a categorical quotient \cite[Theorem 5]{Fauntleroy_first}. In subsequent work \cite{Fauntleroy}, he more generally defined a notion of properly stable points for a linearised action of a connected linear algebraic group on a normal projective variety and shows this locus has a quasi-projective geometric quotient. Fauntleroy combined ideas from reductive GIT with Seshadri covers, which Seshadri used to show actions of connected linear algebraic groups on a normal variety with finite stabiliser groups admit geometric quotients up to a finite equivariant replacement \cite{seshadri_quotients}.

For non-reductive actions on an affine scheme, Winkelmann \cite{Winkelmann} showed there is a rational quotient map to a quasi-affine variety, whose coordinate ring is the ring of invariants (and is not necessarily finitely generated). In fact, he showed the study of coordinate rings of quasi-affine varieties corresponds to the study of rings of invariants for $\GG_a$-actions on affine varieties. 

Alternatively, one can ignore the issue of whether or not rings of invariants are finitely generated and work in the category of all schemes (not necessarily of finite type) over $k$; Greuel and Pfister take this approach in \cite{GreuelPfister} to define a notion of stability for unipotent group actions and show there is a geometric quotient in the category of varieties. Their motivation came from the study of singularities, where non-reductive translation actions appear.

We note that additive group actions also arise as translation actions in affine geometry and have led to progress on classical questions on affine spaces such as the cancellation problem, the existence of exotic affine spaces and the Jacobian conjecture (for example, see \cite{Kraft_survey}).

Doran and Kirwan \cite{dorankirwan} give various notions of (semi)stability for non-reductive GIT using properties of invariants and by transferring the problem to a reductive GIT setting using a notion of \lq fine reductive envelope' and they obtain various types of quotients of these (semi)stable sets.

The recent progress on non-reductive GIT \cite{BDHK}, which we explain below, uses a multiplicative group to grade the unipotent radical; this enables the construction of geometric unipotent quotients, as well as providing a natural projective completion and an explicit Hilbert--Mumford type description of stability. To trace back the origin of this grading multiplicative group, we begin with the correspondence between $\GG_a$-actions and locally nilpotent derivations, and will see these multiplicative group actions naturally appear when there is a slice of the $\GG_a$-action.

\subsection{Actions of the additive group}\label{sec Ga LND}

Since $k$ is of characteristic zero, we can utilise the dictionary between additive group actions and locally nilpotent derivations (e.g.\ see \cite{Freudenburg_LND}).

For an action $\sigma : \GG_a \times X \ra X$ on an affine scheme, the coaction
\[ \sigma^* : \cO(X) \ra \cO(\GG_a \times X) \cong \cO(X) \otimes_k k[t] \]
can be used to define a derivation $D_\sigma: \cO(X) \ra \cO(X)$ given by $D_\sigma(f) := \frac{\partial}{\partial t} (\sigma^*(f))|_{t = 0}$ satisfying the Leibniz rule $D_\sigma(fg) = fD_\sigma(g) + D_\sigma(f)g$. This derivation is \emph{locally nilpotent} (i.e.\ for any $f \in \cO(X)$, there is $n \in \NN$ such that $D_\sigma^n(f) = 0$), as one can inductively show
\[ \sigma^*(f) = \sum_{n \geq 0} \frac{D_\sigma^n(f) t^n}{n!} \]
and since the left side is a polynomial in $t$, we must have $D_\sigma^n(f) = 0$ for all $n$ sufficiently large.

Conversely, given a locally nilpotent derivation $D : A \ra A$ on the $k$-algebra $A = \cO(X)$ of functions on an affine scheme, we can exponentiate to construct a coaction
\[ \sigma^*_D := \exp(tD) = \sum_{n \geq 0} \frac{D^n t^n}{n!} : A \ra A[t] \]
which is well-defined, as $D$ is locally nilpotent. 

These constructions are inverse to each other and we collect some other results relating these geometric and algebraic points of view.

\begin{prop}\label{prop Ga LND slice}
For an affine variety $X$ with coordinate ring $A=\cO(X)$, there is a bijective correspondence
\[ \begin{array}{ccc}
\{ \, \mathrm{ actions } \:\: \GG_a \times X \ra X \} & \longleftrightarrow & \{ \: \mathrm{ locally \: nilpotent \: derivations } \:  D: A \ra A \} \\
\sigma : \GG_a \times X \ra X & \mapsto & D_{\sigma} := \frac{\partial}{\partial t} (\sigma^*(-))|_{t = 0} \\
\sigma^*_D = \exp(tD) : A \ra A[t] & \mapsfrom & D : A \ra A.
\end{array}
\]
For an action $\sigma$ corresponding to a locally nilpotent derivation $D$, the following statements hold.
\begin{enumerate}[label=\emph{\roman*)}]
\item The ring of invariants is the kernel of the derivation: $\cO(X)^{\GG_a} = \ker(D)$,
\item $x \in X$ is $\GG_a$-fixed if and only if $D(A) \subset \fm_x$.
\item If $D$ has a slice (i.e.\ there exists $s \in A = \cO(X)$ such that $D(s) = 1$), then $\cO(X)^{\GG_a}$ is a finitely generated $k$-algebra, the subscheme $S := \{ s = 0 \} \subset X$ is a geometric slice of the $\GG_a$-action and $X \ra \Spec \cO(X)^{\GG_a} $ is a trivial principal $\GG_a$-bundle.
\end{enumerate}
\end{prop}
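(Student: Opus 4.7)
The plan is to establish the bijection by direct computation and then read off \emph{(i)}--\emph{(iii)} from the Taylor formula $\sigma_D^*(f) = \sum_{n \geq 0} D^n(f)\, t^n/n!$, which is a finite sum by local nilpotency. For the correspondence itself, I would verify that $D_\sigma$ is a $k$-derivation because $\sigma^*$ is a $k$-algebra homomorphism, and that $\sigma_D^* = \exp(tD)$ is a coaction because the group-compatibility axiom unwinds to the identity $\exp((s+t)D) = \exp(sD)\circ \exp(tD)$ of operators on $A$, which is the usual consequence of the Leibniz rule and the binomial theorem. Extracting the coefficient of $t^1$ from $\sigma_D^*$ recovers $D$, and iterating $D_\sigma$ reproduces the Taylor formula, so the two constructions are mutually inverse.

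Items \emph{(i)} and \emph{(ii)} are immediate from the Taylor formula. For \emph{(i)}, $f$ is $\GG_a$-invariant iff $\sigma^*(f) \in A \subset A[t]$ is the constant polynomial $f$, iff $D^n(f) = 0$ for all $n \geq 1$, iff $D(f) = 0$. For \emph{(ii)}, $x$ is fixed iff for every $f$ the polynomial $\sigma^*(f)(t,x) \in k[t]$ equals the constant $f(x)$, iff $D^n(f)(x) = 0$ for every $f$ and every $n \geq 1$; since $D^n(A) \subseteq D(A)$ for $n \geq 1$, this collapses to the single condition $D(A) \subseteq \fm_x$.

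For \emph{(iii)} the key technical step, and what I expect to be the main obstacle, is to show that a slice $s$ forces an isomorphism $A \cong A^{\GG_a}[s]$ of $k$-algebras. Transcendence of $s$ over $A^{\GG_a}$ follows by applying a suitable power of $D$ to a hypothetical relation $\sum a_i s^i = 0$ with $a_i \in \ker D$ and using $D^n(s^n) = n!$. For the spanning statement I would introduce the operator $E := \exp(-sD): A \to A$, well-defined by local nilpotency; the crux is the identity $D \circ E = 0$, which follows from $D(s) = 1$ and the Leibniz rule by observing that the two resulting sums telescope. Hence $E(A) \subseteq A^{\GG_a}$, and for any $f \in A$ with $D^{N+1}(f) = 0$ the elements $b_n := E(D^n(f))/n! \in A^{\GG_a}$ satisfy $\sum_{n=0}^{N} b_n s^n = f$, the verification reducing to the binomial identity $\sum_{n+k=m}\binom{m}{k}(-1)^k = 0$ for $m \geq 1$.

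Once $A \cong A^{\GG_a}[s]$ is in hand the remaining assertions are formal. The ring $A^{\GG_a} \cong A/(s)$ is finitely generated as a quotient of the finitely generated $k$-algebra $A$; the closed subscheme $S = \{s = 0\}$ has coordinate ring $A^{\GG_a}$, so the composition $S \hookrightarrow X \to \Spec A^{\GG_a}$ is the identity and exhibits $S$ as a geometric slice; and since $\sigma_D^*(s) = s + t$ identifies the $\GG_a$-action with translation in $s$, the action map $\GG_a \times S \to X$ is an isomorphism, so $X \to \Spec A^{\GG_a}$ is a trivial principal $\GG_a$-bundle.
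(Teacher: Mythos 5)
Your proposal is correct and follows essentially the same route as the paper: the paper only details part \emph{(iii)}, and its key operator $\Phi(f) = \exp(tD)(f)|_{t=-s}$ with the identity $f = \sum_{n\geq 0}\Phi(D^n(f))s^n/n!$ is exactly your $E = \exp(-sD)$ and your expansion $f = \sum_n b_n s^n$. You supply somewhat more detail than the paper (the verification of the bijection, parts \emph{(i)}--\emph{(ii)}, the transcendence of $s$ over $\ker D$, and the telescoping proof that $D\circ E = 0$), all of which is sound.
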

\begin{proof} Let us just give some details on the third statement, as this will be important in what follows. Suppose that $s \in \cO(X)$ is a slice for $D$. Then define a $k$-algebra homomorphism 
\[ \Phi : A \ra A, \quad f \mapsto  \exp(tD(f))|_{t = -s} \]
such that $\im(\Phi) = \ker(D)$. In particular, $\cO(X)^{\GG_a} = \ker(D) = \im(\Phi)$ is a finitely generated $k$-algebra: the images under $\Phi$ of generators for the algebra $A=\cO(X)$ are generators of $\cO(X)^{\GG_a}$. 

By induction, any $f \in \cO(X)$ is a polynomial in $s$ with coefficient in $\im(\Phi) = \cO(X)^{\GG_a}$:
\[ f = \sum_{n \geq 0 } \frac{\Phi(D^n(f)) s^n}{n!} \]
and thus $\cO(X) = \cO(X)^{\GG_a} [s]$ and $\cO(X)^{\GG_a} = \cO(X)/(s)$. Moreover $S = \{ s = 0 \} \subset X$ is isomorphic to $\Spec \cO(X)^{\GG_a}$. We claim that $S$ is a geometric slice; that is $\GG_a \times S \ra X$ given by $(u,x) \ra \sigma(u,x)$ is an isomorphism. Indeed, by considering the following commutative diagrams:
\[ \xymatrix{\cO(X) \ar[rr]^{\Phi} \ar@{->>}[rd] & & \cO(X)  & & & X & & X \ar[ll]_{\Phi^*} \ar[dl] \\
& \cO(X)^{\GG_a} \ar@{^{(}->}[ru] & & & & & S \ar@{_{(}->}[lu] }\]
one can construct the inverse.
\end{proof}

Note that if there is a slice, then the coordinate ring is a polynomial ring in a single variable with coefficients in the ring of invariants. In particular, the coordinate ring is naturally graded by $\NN$ (the degree of this polynomial) and thus this gives a $\GG_m$-action. This naturally leads to considering actions of semi-direct products of $\GG_a$ and $\GG_m$.

\subsection{Semi-direct products of additive and multiplicative groups}\label{sec Ga Gm}

For an affine scheme $X$, a $\GG_m$-action on $X$ is equivalent to a $\ZZ$-grading of the $k$-algebra $\cO(X)$. The limit under the $\GG_m$-action as $t \ra 0$ exists for all $x \in X$ if and only if $\GG_m$ acts with non-negative weights on $X$, which is if and only if the grading on $\cO(X)$ is supported in non-positive degrees.

\begin{defn}
A semi-direct product of $\GG_a$ and $\GG_m$ is given by specifying a group homomorphism $\varphi : \GG_m \ra \Aut(\GG_a)$ and defining the semi-direct product $\GG_a \rtimes_\varphi \GG_m$ to have underlying set $\GG_a \times \GG_m$ with group operation
\[ (u,t) \cdot (v,s) = (v + \varphi_s(u),ts). \]
For $n \in \ZZ$, let us define $\varphi$ by $ t \mapsto t^{-n}$ and write $\GG_a \rtimes_n \GG_m$ for the associated semi-direct product, where $t u t^{-1} = t^n u$.
\end{defn}

\begin{ex}
The upper triangular Borel $B < \SL_2$ is isomorphic to a semi-direct product $\GG_a \rtimes_2 \GG_m$, via $(u,t) \mapsto \left( \begin{smallmatrix} t & ut \\ 0 & t^{-1} \end{smallmatrix} \right).$
\end{ex}

\begin{rmk}
For an action of a semi-direct product $\GG_a \rtimes_n \GG_m$ on an affine scheme, the locally nilpotent derivation $D$ associated to the $\GG_a$-action is homogeneous of degree $n$ with respect to the grading $\cO(X) = \bigoplus_{r \in \ZZ} \cO(X)_r$ determined by the $
\GG_m$-action; that is, $D(\cO(X)_r) \subset \cO(X)_{r+n}$.
\end{rmk}

The following result is the key proposition which enables the inductive construction of quotients of unipotent group actions in the presence of an appropriate $\GG_m$-action grading the unipotent action. This is a modification of \cite[Lemma 7.3]{BDHK_proj}.

\begin{prop}[Key Proposition]\label{key prop}
For an affine $\GG_a$-scheme $X$ with locally nilpotent derivation $D : \cO(X) \ra \cO(X)$, the following statements are equivalent:
\begin{enumerate}
\item $D$ has a slice (i.e.\ there exists $s \in \cO(X)$ with $D(s) = 1$),
\item The $\GG_a$-action extends to $\GG_a \rtimes_n \GG_m$ for some $n >0$ such that
\begin{enumerate}
\item  $\lim_{t \ra 0} t \cdot x$ exists for all $x \in X$ and
\item $\Stab_{\GG_a}(z) = \{ e \}$ for all $z \in Z:=\{ \lim_{t \ra 0} t \cdot x\}$.
\end{enumerate}
\end{enumerate}
In particular, if (2) holds, then there is a trivial $U$-quotient $X \mapsto \cO(X)^{\GG_a}$.
\end{prop}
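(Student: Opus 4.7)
The plan is to establish the two implications separately and then read off the final clause from (1).

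For (1)$\Rightarrow$(2), given a slice $s \in \cO(X)$ with $D(s) = 1$, Proposition \ref{prop Ga LND slice}(iii) already gives $\cO(X) = \cO(X)^{\GG_a}[s]$ as a polynomial algebra over the invariants and realises $X \cong \GG_a \times S$ as a trivial $\GG_a$-bundle over $S = \{s = 0\}$. I would promote this polynomial structure to a $\GG_m$-action by fixing any $n > 0$ and $\ZZ$-grading $\cO(X)$ so that $\cO(X)^{\GG_a}$ sits in degree $0$ and $s$ in degree $-n$. The grading is supported in non-positive degrees, so (a) follows from the criterion recalled earlier in the paper. Since $D$ annihilates the invariants and sends $s$ (degree $-n$) to $1$ (degree $0$), it is homogeneous of degree $n$, which is exactly the compatibility required to upgrade the two actions to a $\GG_a \rtimes_n \GG_m$-action; freeness of $\GG_a$ on $X \cong \GG_a \times S$ immediately gives (b).

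For (2)$\Rightarrow$(1), grade $\cO(X) = \bigoplus_{r \leq 0} \cO(X)_r$ via the $\GG_m$-action; by (a) the grading is supported in non-positive degrees and $Z = X^{\GG_m}$ has coordinate ring $\cO(X)_0$, while the derivation $D$ is homogeneous of degree $n > 0$. The plan is to establish the identity
\[ D\bigl(\cO(X)_{-n}\bigr) \cdot \cO(X)_0 \;=\; \cO(X)_0, \]
from which I can write $1 = \sum_i f_i\, D(g_i)$ with $f_i \in \cO(X)_0$ and $g_i \in \cO(X)_{-n}$; as each $f_i$ is automatically $\GG_a$-invariant (because $D(\cO(X)_0) \subset \cO(X)_n = 0$), the Leibniz rule then rewrites this as $1 = D\bigl(\sum_i f_i g_i\bigr)$, producing a slice in degree $-n$.

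The main obstacle is the displayed identity, and this is precisely where freeness of $\GG_a$ on $Z$ enters. If it failed, there would be a maximal ideal of $\cO(X)_0$ containing $D(\cO(X)_{-n})$, i.e.\ some $z \in Z$ with $D(f)(z) = 0$ for every $f \in \cO(X)_{-n}$. A degree-by-degree check then forces the whole tangent vector $D_z$ to vanish: on $\cO(X)_r$ with $-n < r \leq 0$ the image lies in $\cO(X)_{r+n}$, a positive-degree piece which is zero; on $\cO(X)_{-n}$ it vanishes at $z$ by assumption; and on $\cO(X)_r$ with $r < -n$ the image lies in $\cO(X)_{<0}$, whose elements vanish at $z \in Z$. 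By Proposition \ref{prop Ga LND slice}(ii) this makes $z$ a $\GG_a$-fixed point of $X$, contradicting (b). Once (1) is established, the \emph{in particular} clause is the content of Proposition \ref{prop Ga LND slice}(iii).
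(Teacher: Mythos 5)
Your proof is correct and follows essentially the same route as the paper: grade $\cO(X)$ in non-positive degrees so that $D$ is homogeneous of degree $n>0$, observe that $D(\cO(X)_{-n})$ generates the unit ideal of $\cO(X)_0$ via the Leibniz rule (your closing computation $1=\sum_i f_iD(g_i)=D(\sum_i f_ig_i)$ is exactly the paper's Claim 1), and derive a contradiction from the triviality of $\GG_a$-stabilisers on $Z$ if that ideal were proper. The one small divergence is in the contradiction step: where the paper asserts that a maximal ideal containing the invariant ideal $I=D(\cO(X)_{-n})\oplus\cO(X)_{<0}$ is itself invariant, you instead check degree by degree that $D(\cO(X))\subset\fm_z$ and invoke Proposition \ref{prop Ga LND slice}(ii); this is a cleaner and fully justified way to reach the same conclusion.
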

\begin{proof}
Suppose $D$ has a slice $s$; then $\cO(X) = \cO(X)^{\GG_a}[s]$ as in the proof of Proposition \ref{prop Ga LND slice}. Since $\cO(X)$ is a polynomial ring in $s$, it is naturally graded, and we choose a $\ZZ_{\leq 0}$-grading given by $\cO(X)_{-r} = \cO(X)^{\GG_a}s^r$ so that $\lim_{t \ra 0} t \cdot x$ exists for all $x \in X$. Since $D(s) = 1$, we see that $D$ is homogeneous of degree $1$ and thus there is an action of $\GG_a \rtimes_1 \GG_m$. Since $ X \ra \spec \cO(X)^{\GG_a}$ is a trivial $\GG_a$-bundle (see Proposition \ref{prop Ga LND slice}), all $\GG_a$-stabilisers are trivial.

Let us outline the converse direction (see \cite[Lemma 7.3]{BDHK_proj} for the full proof). Given $n > 0$ and a $\GG_a \rtimes_n \GG_m$-action on $X$ such that $\lim_{t \ra 0} t \cdot x$ exists for all $x \in X$ and $\Stab_{\GG_a}(z) = \{ e \}$ for all $z \in Z$, we note that $D$ has homogeneous degree $n$ with respect to the grading $\cO(X) = \oplus_r \cO(X)_r$ given by the $\GG_m$-action. By the assumption that $\lim_{t \ra 0} t \cdot x$ exists for all $x \in X$, this grading is supported in non-positive degrees. We have
\begin{equation}\label{eq D}
 D(\cO(X)_r) \subset \left\{ \begin{array}{ll} 0 & \mathrm{ if } \: r > -n \\ \cO(X)_0 & \mathrm{ if } \: r = -n \\ \cO(X)_{<0} & \mathrm{ if } \: r < -n \end{array}\right.
\end{equation}
and we will show that $D(\cO(X)_{-n})=\cO(X)_0$, so there exists $s \in \cO(X)_{-n}$ with $D(s) = 1$. Let
\begin{equation}\label{def I}
 I := D(\cO(X)_{-n}) \oplus \cO(X)_{<0},
\end{equation}
which is $\GG_a$-stable by \eqref{eq D} and also $\GG_m$-stable, as $\cO(X)_{<0}$ is a sum of $\GG_m$-weight spaces and the $\GG_m$-action on $D(\cO(X)_{-n}) \subset \cO(X)_0$ is trivial. Then it suffices to show that $I$ is an ideal (Claim 1) and moreover $I = \cO(X)$ (Claim 2). Indeed, as $I= \cO(X)$ we must have $D(\cO(X)_{-n}) = \cO(X)_0$ and so $D(s) = 1$ for some $s \in \cO(X)_{-n}$.

To prove Claim 1, we need to show for $f \in I$ and $h \in \cO(X)$ that $ h f \in I$; for this we can write $h = \sum h_r$ with respect to the $\GG_m$-grading and it suffices to show $h_rf \in I$ for all $r$. We can also write $f = D(p_{-n}) + \sum_{r <0} f_r$ by \eqref{def I}. The only non-trivial case is for $h = h_0$ and $f = D(p_{-n})$; however, by the Leibniz rule $D(h_0p_{-n}) = h_0D(p_{-n})$ and so $h_0 f = D(h_0p_{-n}) \in I$.

To prove Claim 2, we argue by contradiction. If $ I \subsetneq \cO(X)$, then it is contained in a maximal ideal $\mathfrak{m}_x$. Since $I$ is $\GG_a \rtimes_n \GG_m$-stable, so is $\fm_x$ and so it corresponds to a $\GG_a \rtimes_n \GG_m$-fixed point $x \in Z$. However, this contradicts the assumption that $\Stab_{\GG_a}(z) = \{ e \}$ for all $z \in Z$.
\end{proof}

Note that there is a choice of sign here. One could consider actions of $\GG_a \rtimes_n \GG_m$ for $n < 0$ such that $\lim_{t \ra \infty} t \cdot x$ exists for all $x \in X$ and obtain an analogous result.

\begin{rmk}
For a linear representation $\GG_a \rtimes \GG_m \ra \GL(V)$, we have $V_{\max} \subset V^{\GG_a}$, where $V_{\max}$ is the weight space for the maximal $\GG_m$-weight.
\end{rmk}

\begin{ex}
Consider the upper triangular Borel $\GG_a \rtimes_2 \GG_m \cong B < \SL_2$ acting on $V=\AA^2$ by left multiplication. We have 
\[ V = V_{+1} \oplus V_{-1} = \{ (*,0)\} \oplus \{ (0,*)\}\]
and $V_{\max} = V_{+1} \subset V^{\GG_a}$.
\end{ex}

\subsection{Graded unipotent groups}\label{sec gr uni gp}

Generalising Proposition \ref{key prop}, we will consider actions of unipotent groups which are graded by a $\GG_m$-action in the following sense.

\begin{defn}
A \emph{graded unipotent group} is a semi-direct product $\hU:= U \rtimes \GG_m$ of a unipotent group with a multiplicative group such that the conjugation of $\GG_m$ on the Lie algebra of $U$ has strictly positive weights.
\end{defn}

\begin{ex}
For $n >0$, the group $\GG_a \rtimes_n \GG_m$ is a graded unipotent group. In particular, the upper triangular Borel $B < \SL_2$ is a graded unipotent group.
\end{ex}

\begin{prop}\label{key prop U}
Let $X$ be an affine scheme with an action of a graded unipotent group $\hU:= U \rtimes \GG_m$ such that $\lim_{t \ra 0} t \cdot x$ exists for all $x \in X$ and $\Stab_U(z) = \{ e \}$ for all $z \in Z:=\{ \lim_{t \ra 0} t \cdot x\}$, then $\cO(X)^U$ is finitely generated and $X \ra \Spec \cO(X)^U$ is a trivial $U$-quotient.
\end{prop}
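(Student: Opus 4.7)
The plan is to argue by induction on $\dim U$, with Proposition~\ref{key prop} providing both the base case and the engine of each inductive step. When $\dim U = 1$, the group $\hU = U \rtimes \GG_m$ is necessarily of the form $\GG_a \rtimes_n \GG_m$ for some $n>0$ (the positive $\GG_m$-weight on $\Lie U$), and the hypotheses of the proposition are exactly condition (2) of Proposition~\ref{key prop}, whose conclusion gives both the finite generation of $\cO(X)^{\GG_a}$ and the trivial principal $\GG_a$-bundle $X \to \Spec \cO(X)^{\GG_a}$.

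For the inductive step, I first isolate a central $\GG_m$-stable subgroup $U_1 \cong \GG_a$ of $U$. Since $U$ is a nontrivial nilpotent group, $Z(U)$ is nontrivial, and it is $\GG_m$-stable because conjugation preserves the centre. Decomposing $\Lie Z(U)$ into $\GG_m$-weight spaces (all of strictly positive weight by the graded hypothesis) and exponentiating any weight vector produces such a $U_1$, on which $\GG_m$ acts with some weight $n>0$, so $\hU_1 := U_1 \rtimes \GG_m \cong \GG_a \rtimes_n \GG_m$. The hypotheses of Proposition~\ref{key prop} for $\hU_1$ acting on $X$ are inherited from those on $\hU$, since $\Stab_{U_1}(z) \subseteq \Stab_U(z) = \{e\}$ for $z \in Z$, producing a trivial principal $U_1$-bundle $\pi : X \to X_1 := \Spec \cO(X)^{U_1}$ with $X_1$ affine.

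Next, the graded unipotent group $\hU/U_1 = (U/U_1) \rtimes \GG_m$ acts on $X_1$, and I verify its hypotheses in order to apply the inductive hypothesis. The $\GG_m$-grading on $\cO(X_1) = \cO(X)^{U_1} \subseteq \cO(X)$ remains supported in non-positive degrees, so the required limits exist. For the stabiliser condition on $Z_1 := X_1^{\GG_m}$, one checks that $Z_1 = \pi(Z)$ by $\GG_m$-equivariance of $\pi$; then if $\bar u \in U/U_1$ fixes $z' = \pi(z)$ with $z \in Z$, any lift $u \in U$ satisfies $\pi(u \cdot z) = z' = \pi(z)$, so $u \cdot z = u_1 \cdot z$ for some $u_1 \in U_1$, forcing $u_1^{-1}u \in \Stab_U(z) = \{e\}$ and hence $\bar u = e$. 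By induction, $\cO(X_1)^{U/U_1}$ is finitely generated and $X_1 \to Y := \Spec \cO(X_1)^{U/U_1}$ is a trivial principal $(U/U_1)$-bundle.

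Since $\cO(X)^U = (\cO(X)^{U_1})^{U/U_1} = \cO(X_1)^{U/U_1}$, the ring $\cO(X)^U$ is finitely generated. Composing the sections of the two intermediate trivial bundles produces a section $\sigma : Y \to X$, and the candidate isomorphism is the $U$-equivariant morphism $\Phi : Y \times U \to X$, $(y,u) \mapsto u \cdot \sigma(y)$. Freeness of the $U$-action on $X$ follows from the two inductive freeness statements (if $u$ fixes $x$, its image $\bar u$ fixes $\pi(x) \in X_1$ so $\bar u = e$, forcing $u \in U_1$ and then $u = e$), which gives both injectivity of $\Phi$ and surjectivity on orbits, with $\Phi$ descending to $\id_Y$. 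The main obstacle I anticipate is upgrading this set-theoretic bijection to an isomorphism of schemes; I would handle it by iterating the product decompositions, $X \cong X_1 \times U_1 \cong Y \times (U/U_1) \times U_1 \cong Y \times U$, the last isomorphism using a scheme-theoretic section of $U \to U/U_1$ (available in characteristic zero since $U$ is isomorphic to affine space as a variety), and then verifying $U$-equivariance of the composite, which reduces to a short check using the freeness of the $U$-action and the equivariance of the intermediate sections.
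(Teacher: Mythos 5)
Your proposal is correct and takes essentially the same route as the paper's proof: both filter $U$ by $\GG_m$-stable normal subgroups with successive quotients isomorphic to $\GG_a$ (graded with positive weight) and iteratively apply Proposition~\ref{key prop}, with identical verifications that the limits exist on each intermediate quotient and that the stabiliser condition descends via the lifting argument $u\cdot z = u_1\cdot z \Rightarrow u_1^{-1}u \in \Stab_U(z)=\{e\}$. The one place to be careful is the final assembly of the trivial $U$-quotient: the paper outsources this to cited results (the composite of the two principal bundles is a principal $U$-bundle, and unipotent torsors over an affine base are trivial), whereas your ``short check'' of $U$-equivariance of the product decomposition $X\cong Y\times (U/U_1)\times U_1$ is genuinely the non-trivial point there, since the $U_1$-slice need not be preserved by $U$ and the $U$-action on the product is a priori twisted by a cocycle.
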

\begin{proof}
The idea is to iteratively apply Proposition \ref{key prop} as in the proof of \cite[Proposition 7.4]{BDHK_proj}. By lifting a filtration on the Lie algebra via the exponential map, we obtain normal subgroups 
\[ \{ e \} = U_0 < U_1 < \cdots < U_r  = U \]
whose successive quotients are copies of $\GG_a$ on which $\GG_m$ acts by conjugation with strictly positive weights. Assume that we have constructed a quotient $q_j: X \ra X_j:= \Spec \cO(X)^{U_j}$ which is a trivial $U_j$-quotient. The base case for $j = 1$ is given by Proposition \ref{key prop}. For the inductive step, we claim that we can apply Proposition \ref{key prop} to the $\GG_a \cong U_{j+1}/U_j$ action on $X_j$ to show there is a trivial $U_{j+1}/U_j$-quotient $X_{j} \ra \cO(X_j)^{U_{j+1}/U_j}$. Then the composition 
\[ X \ra X_j:= \Spec \cO(X)^{U_j} \ra \cO(X_j)^{U_{j+1}/U_j} = \cO(X)^{U_{j+1}}\]
is a principal $U_j$-bundle (see \cite[Proposition 4.7]{BK_momentmap}), which is trivial as the base is affine by \cite[Theorem 3.12]{AsokDoran}.

To complete the proof, we must show that for the $\GG_a \cong U_{j+1}/U_j$ action on $X_j$ graded by $\GG_m < \hU$ all limits $\lim_{t \ra 0} t \cdot x_j$ exists for $x_j \in X_j$ and $\Stab_{U_{j+1}/U_j}(x_j) = \{ e \}$ for all $x_j \in X_j^{\GG_m}$ in order to be able to apply Proposition \ref{key prop}. Since $q_j$ is $\GG_m$-equivariant, we have
\[ \lim_{t \ra 0} t \cdot q_j(x) = \lim_{t \ra 0} q_j(t \cdot x) = q_j\left( \lim_{t \ra 0} t \cdot x \right). \]
Thus the set $Z_j$ of such limits in $X_j$ is contained in $q_j(Z)$. For a point $q_j(z) \in Z_j$, suppose that $uU_j \in \Stab_{U_{j+1}/U_j}(q_j(z))$; that is, there exists $u' \in U_j$ such that $u'z = uz$. However, by assumption $\Stab_U(z) = \{ e \}$ and so we have $u = u' \in U_j$, which means $\Stab_{U_{j+1}/U_j}(q_j(z))$ is trivial and we can apply Proposition \ref{key prop} to $X_j$ as claimed.
\end{proof}

\begin{rmk} 
For a representation $\rho : \hU \ra \GL(V)$, note that $V_{\max} \subset V^{U}$.
\end{rmk}

In particular, rather than trying to find $U$-invariant sections, we can use $\GG_m$-maximal sections to construct a non-reductive GIT quotient as in the next subsection.

\subsection{Statement of the key results in non-reductive GIT}\label{sec statements nrGIT}

Instead of working with a linearised action on a projective scheme, for simplicity we will assume that we have a linear action on $X=\PP(V)$ as in $\S$\ref{sec proj GIT}. As before, in the case of a (very ample) linearisation $\cL$ on $X$, we get a projective embedding $X \hookrightarrow \PP(V)$ where $V = H^0(X, \cL)^*$ has a linear action. 

This concerns actions of affine algebraic groups, whose unipotent radical is graded by a $\GG_m$.

\begin{defn}
Let $G = U \rtimes R$ be an affine algebraic group with unipotent radical $U$ and reductive Levi factor $R$. A 1-PS $\lambda : \GG_m \ra Z(R)$ is said to \emph{grade} $U$ if its conjugation action on $\Lie U$ has strictly positive weights. In this case, we say \emph{$G$ has graded unipotent radical (by $\lambda$)}.
\end{defn}

This is what is called an \emph{internal grading} in \cite{BDHK_proj}, where there is also a more general notion of an \emph{external grading}; however, we will stick to the simpler internal point of view, as this suffices in all examples and applications we consider. 

There could be several central 1-PSs in $Z(R)$ that grade $U$, but we will assume we have fixed a grading $\GG_m$ and write $\hU = U \times \GG_m$. Different grading 1-PSs gives rise to different quotients, so can be thought of as an additional choice to the linearisation (see Remark \ref{rmk adapted} below). 

\begin{ex}
A parabolic subgroup $ P$ of $\GL_n$ (or more generally any reductive group) has graded unipotent radical: write $P = P_\lambda$ for a 1-PS $\lambda$, then $P_\lambda = U_\lambda \rtimes L_\lambda$, with $\lambda$ being central in $L_\lambda$ and grading $U_\lambda$.
\end{ex}

\begin{defn}[Minimal weight space and attracting set]
For a multiplicative group $\GG_m$ acting linearly on $X = \PP(V)$, we define the minimal weight space $Z_{\min}$ and minimal attracting set $X_{\min}$ as follows:
\[ \begin{array}{rlcrl}
Z_{\min} & := \: \PP(V_{\min}) & \stackrel{p}{\longleftarrow} &  X_{\min} & := \: \{ x \in X : \lim_{t \ra 0} t \cdot x \in Z_{\min} \}
\\ 
& \: =   \{ x : \wt_{\GG_m}(x) = \{ \omega_{\min} \} \}  & & & \:  = \: \{ x : \omega_{\min} \in \wt_{\GG_m}(x) \}
\end{array}
\]
where $\omega_{\min} = \omega_0 < \omega_1 < \dots < \omega_n $ are the $\GG_m$-weights on $V$ and $V_{\min} = V_{\omega_{\min}}$.
\end{defn}

In \cite{BDHK}, the minimal attracting set $X_{\min}$ is denoted $X_{\min}^0$, but we have simplified the notation. For the associated Bia{\l}ynicki-Birula stratification \cite{bb} (flowing as $t \ra 0$), the variety $X_{\min}$ is the open stratum and $p : X_{\min} \ra Z_{\min}$ is a Zariski locally trivial affine space fibration.

In \cite{BDHK_proj}, there are two types of assumptions needed for the construction of non-reductive GIT quotients: the first (Definition \ref{def adapted}) concerns the positioning of the weights for the linearised action of the grading $\GG_m$, which selects a particular VGIT chamber for $\GG_m$ and can be achieved by twisting the linearisation by a (rational) character $\chi : \hU \ra \GG_m$, and the second (Assumption \ref{condU0} in Definition \ref{def stab ass}) requires certain unipotent stabiliser groups to be trivial, which is referred to as \emph{semistability coincides with stability} in \cite{BDHK_proj}.

\begin{defn}[Adapted linearisation]\label{def adapted}
Let $ G = U \rtimes R$ be a group with unipotent radical graded by $\GG_m < Z(R)$ acting linearly on $X = \PP(V)$. We say this linearised action is \emph{adapted} if the $\GG_m$-weights on $V$ satisfy
\[ \omega_{\min} = \omega_0 < 0 < \omega_1 < \dots < \omega_n. \]
\end{defn}

\begin{rmk}\label{rmk adapted}
The assumption that the linearised action is adapted fixes a particular VGIT chamber for the $\GG_m$-action, where (semi)stability is given by
\[ X^{\GG_m-(s)s} = X_{\min} \setminus Z_{\min}. \]
Since twisting the linearisation by a character $\chi : \hU \ra \GG_m$ shifts the weights by $-\chi$ (where we identify characters of $\hU$ with integers $\ZZ$ such that $1 \in \ZZ$ corresponds to a character $\hU \ra \GG_m$ with kernel $U$), if we are prepared to modify the linearisation we can always arrange for this condition to hold. Note that this shift of weights does not change the minimal weight space $Z_{\min}$ and attracting set $X_{\min}$.
\end{rmk}

\begin{defn}[Stabiliser assumptions]\label{def stab ass}
Let $ G = U \rtimes R$ be a group with unipotent radical graded by $\GG_m < Z(R)$ acting linearly on $X = \PP(V)$.
\begin{enumerate}
\item We say \emph{the unipotent stabiliser assumption holds} if 
\begin{equation}\label{condU0}
\dim \Stab_U(z) = 0 \: \text{ for all } \: z \in Z_{\min}, \tag*{$[\hU]_0$}
\end{equation}
\item We say \emph{the reductive stabiliser assumption holds} if
\begin{equation}\label{condR0}
\dim \Stab_{\overline{R}}(z) = 0 \: \text{ for all } z \in Z_{\min}^{\overline{R}-ss}, \tag*{$[\overline{R}]_0$}
\end{equation}
where as $\GG_m$ is central in $R$, there is an induced $R$-action on the $\GG_m$-fixed variety $Z_{\min}$ and we let $Z_{\min}^{\overline{R}-ss}$ denote the GIT semistable locus for $\overline{R}:= R/\GG_m$.
\end{enumerate}
\end{defn}

\begin{rmk}
The unipotent stabiliser assumptions are crucial to apply Proposition \ref{key prop U}, whereas the reductive stabiliser assumptions are used to more readily obtain an explicit Hilbert--Mumford type description of the locus we obtain a quotient of. The reductive stabiliser assumption implies semistability coincides with stability for the $\overline{R}$-action on $Z_{\min}$.

If these stabiliser assumptions fail, then one would like to perform a sequence of equivariant blow-ups to arrange for this to hold on the blow-up (similar to Kirwan's partial desingularisation procedure \cite{Kirwan_desing}) and then construct a quotient of the original scheme using the quotient of the blow-up; however, this procedure is much more complicated in the non-reductive setting described in \cite[$\S$9]{BDHK_proj}. Furthermore, if there are generically positive dimensional unipotent stabilisers, blowing up would only result in constant dimensional stabilisers (rather than trivial stabilisers), and so instead one must filter $U$ by normal subgroups whose stabilisers are constant (assuming this can be done via blow-ups) and then proceed as in \cite[Remark 7.1]{BDHK_proj} and \cite{Qiao}.
\end{rmk}

The strategy is to first use the grading $\GG_m$ to obtain a projective quotient by $\hU = U \rtimes \GG_m$ and then take a reductive GIT quotient by the residual group $\overline{R}=R/\GG_m$. Therefore, we first state the result for quotients by graded unipotent groups $\hU = U \rtimes \GG_m$.

\begin{thm}[The $\hU$-Theorem, {\cite[Theorem 2.16]{BDHK}}]\label{thm Uhat}
Let $ \hU = U \rtimes \GG_m$ be a graded unipotent group acting linearly on $X = \PP(V)$. If the linearised action is adapted and the unipotent stabiliser assumption \ref{condU0} hold, then we have the following statements.
\begin{enumerate}[label=\emph{\roman*)}]
\item There is a geometric $U$-quotient $q_U : X_{\min} \ra X_{\min}/U$ such that $X_{\min}/U$ is a quasi-projective variety.
\item There is a geometric $\hU$-quotient $q_{\hU} : X_{\min} \setminus UZ_{\min} \ra X/\!/\hU:=(X_{\min} \setminus UZ_{\min})/\hU$ such that $X/\!/\hU$ is a projective variety.
\item If the linearised action is {well-adapted}\footnote{See Definition \ref{def well-adapted} below, which can be achieved by further twisting by a rational character.}, then the ring of $\hU$-invariant sections (for an appropriate power) of the linearisation is finitely generated and taking the Proj construction gives the above geometric $\hU$-quotient $q_{\hU}$.
\end{enumerate}
\end{thm}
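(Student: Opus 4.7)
The plan is to prove (i)--(iii) by combining Proposition~\ref{key prop U} with a $\hU$-invariant affine cover of $X_{\min}$, then descending to a residual $\GG_m$-GIT quotient. The key observation is that the dual minimum weight space $V_{\min}^*$ sits inside $(V^*)^U$: it is the maximum weight space of $V^*$ under the dual $\GG_m$-action, and since $U$ is positively graded by $\GG_m$, no element of maximal weight can be shifted to a strictly higher weight. Consequently, for any $f\in V_{\min}^*$ the standard affine open $X_f=\{[v]\in X:f(v)\neq 0\}$ is $\hU$-invariant, and the family $\{X_f\}_{f\in V_{\min}^*}$ covers exactly $X_{\min}$.

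For (i), I would apply Proposition~\ref{key prop U} on each $X_f$. For $x=[v]\in X_f$, writing $v=v_{\min}+v_{>}$ gives $\lim_{t\to 0}t\cdot x=[v_{\min}]\in X_f\cap Z_{\min}$, so all $\GG_m$-limits exist in $X_f$ and land in $Z_{\min}$, where the unipotent stabiliser assumption~\ref{condU0} forces $\Stab_U=\{e\}$. Proposition~\ref{key prop U} then produces a trivial $U$-quotient $X_f\to\Spec\cO(X_f)^U$ with finitely generated coordinate ring; these glue along the principal affine intersections $X_f\cap X_{f'}=X_{ff'}$ (where the same argument applies) to yield a quasi-projective geometric $U$-quotient $q_U\colon X_{\min}\to X_{\min}/U$.

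For (ii), the residual $\GG_m\cong\hU/U$-action on $X_{\min}/U$ has fixed locus equal to the image of $Z_{\min}$, since $Z_{\min}$ is $U$-fixed and defines a section of the natural retraction $X_{\min}/U\to Z_{\min}$. The adapted condition $\omega_{\min}<0<\omega_1<\cdots$ forces every point of $(X_{\min}/U)\setminus Z_{\min}$ to be $\GG_m$-stable by the Hilbert--Mumford criterion (Proposition~\ref{prop HM Gm}), so there is a geometric $\GG_m$-quotient, which under the obvious identification coincides with $(X_{\min}\setminus UZ_{\min})/\hU$. Projectivity follows from $Z_{\min}$ being projective together with a $\GG_m$-equivariant realisation of $X_{\min}/U$ as a scheme projective over $Z_{\min}$, built from sections in $\bigoplus_N(\Sym^N V_{\min}^*)^U$.

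For (iii), the decomposition $R^{\hU}=(R^U)^{\GG_m}$ with $R^U:=\bigoplus_N H^0(X,L^{\otimes N})^U$ reduces finite generation of $R^{\hU}$ to finite generation of $R^U$, since $\GG_m$ is reductive. The main obstacle---and the genuinely non-reductive input---is that $R^U$ is \emph{a priori} not finitely generated; this is where the well-adapted hypothesis is essential. After twisting by a rational character so that the strictly positive weights are pushed arbitrarily close to $0$ while $\omega_{\min}$ remains bounded away, every $\hU$-invariant section should, after multiplication by a sufficiently high power of some $f\in V_{\min}^*$, be expressible as a polynomial in finitely many explicit invariants produced by the exponential/slice map underlying Proposition~\ref{key prop U}. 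Once finite generation is established, identifying $\proj R^{\hU}$ with the quotient from (ii) is done locally: for each $f\in V_{\min}^*$, the homogeneous localisation $(R^{\hU})_{(f)}=(\cO(X_f)^U)^{\GG_m}$ is the ring of regular functions on the affine piece $(X_f\setminus U(X_f\cap Z_{\min}))/\hU$, and these agree on overlaps by construction.
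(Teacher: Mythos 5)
Your step (i) follows the paper's route (Proposition \ref{prop proof step 1}): the cover of $X_{\min}$ by the affine opens $X_f$ with $f \in V_{\min}^* = H^0(X,\cO(1))_{\max} \subset (V^*)^U$, the verification that all $\GG_m$-limits in $X_f$ land in $Z_{\min}$, and the application of Proposition \ref{key prop U} are all exactly as in the paper. But from there two genuine gaps appear. First, gluing the affine quotients $\Spec\cO(X_f)^U$ along overlaps only produces an abstract scheme, which a priori need not even be separated, let alone quasi-projective; the paper needs a separate step (Proposition \ref{prop proof step 2}) embedding $X_{\min}/U$ into $\PP(W)$ with $W=(H^0(X,\cO(r))^U)^*$ for suitable $r$. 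Your proposed replacement, using sections in $\bigoplus_N \Sym^N V_{\min}^*$, cannot work: those sections factor through the retraction $p:X_{\min}\to Z_{\min}$, so they only recover $Z_{\min}$ under a Veronese and cannot separate points in the fibres of $X_{\min}/U\to Z_{\min}$; and the assertion that $X_{\min}/U$ is projective over $Z_{\min}$ is false (it would force $X_{\min}/U$ to be proper, which already fails for $U$ trivial, where $X_{\min}\to Z_{\min}$ is an open affine-space fibration). One genuinely needs all of $H^0(X,\cO(r))^U$, whose finite generation in each $X_{\sigma_i}$-chart is what step (i) supplies.

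Second, and more importantly, your argument for (ii) applies the Hilbert--Mumford criterion with the weights $\omega_{\min}<0<\omega_1<\cdots$ on $V$, but the residual $\GG_m$-quotient must be taken inside $\PP(W)$, where the weights are $r$-fold sums $\omega_{i_1}+\cdots+\omega_{i_r}$. Under the merely adapted hypothesis these sums can include several negative values besides $r\omega_{\min}$ (e.g.\ $(r-1)\omega_{\min}+\omega_1$ when $\omega_{\min}$ is very negative), so the $\GG_m$-semistable locus of $\overline{X_{\min}/U}\subset\PP(W)$ need not be $\PP(W)_{\min}\setminus\PP(W_{\min})$, and points of $(X_{\min}/U)\setminus q_U(Z_{\min})$ can be unstable. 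This is precisely why the paper twists to a \emph{well-adapted} linearisation (Definition \ref{def well-adapted}, Proposition \ref{prop proof step 3}): with $\omega_{\min}=-\epsilon$ and $r\epsilon<1$, the weight $r\omega_{\min}$ becomes the unique negative weight on $W$, and Lemma \ref{lemma proof step 3} then identifies the stable locus with $X_{\min}\setminus UZ_{\min}$. Your proof omits this twist entirely, which is the central mechanism of the argument (the statement of (ii) only assumes adaptedness because the twist is always available, but the proof must perform it). Finally, your sketch of (iii) reverses the logical order --- it reduces finite generation of $R^{\hU}$ to finite generation of $R^U$, which is not established, and the key step is left as ``should be expressible''; the paper instead deduces finite generation from the projectivity of the enveloping quotient, identified with $q_{\hU}$.
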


\noindent Let us outline the structure of the proof.
\begin{enumerate}
\item Since $X_{\min} = \bigcup_{\sigma \in H^0(X,\cO(1))_{\max}} X_{\sigma}$, the $U$-quotient is constructed by applying Proposition \ref{key prop U} to each affine $\hU$-variety $X_{\sigma}$ for $\sigma \in H^0(X,\cO(1))_{\max}$ (see Proposition \ref{prop proof step 1}).
\item Construct a $\GG_m$-equivariant embedding $ X_{\min}/U \hookrightarrow \PP(W)$ with $W := (H^0(X,\cO(r))^U)^*$ for some $r > 0$, to show $ X_{\min}/U$ is quasi-projective (see Proposition \ref{prop proof step 2}). 
\item By appropriately twisting the original linearisation (to make it \emph{well-adapted}), the induced $\GG_m$-action on $\PP(W)$ is adapted and $\PP(W)^{\GG_m-(s)s} = \PP(W)_{\min} \setminus \PP(W_{\min})$; thus
\[ q_{U}^{-1}(\overline{X_{\min}/U}^{\GG_m-(s)s} ) = X_{\min} \setminus U Z_{\min}, \]
has a geometric $\hU$-quotient, as a closed subvariety of $\PP(W)/\!/\GG_m$ (see Proposition \ref{prop proof step 3}).
\item To prove that the ring of $\hU$-invariant sections is finitely generated, one shows that $q_{\hU}$ coincides with an \emph{enveloping quotient} (see \cite[Definition 3.1.6]{BDHK_handbook}) and as the enveloping quotient is projective, the ring of $\hU$-invariant sections for an appropriately divisible power of the well-adapted linearisation is finitely generated by \cite[Corollary 3.1.21]{BDHK_handbook}.
\end{enumerate}
The first two steps give the proof of Theorem \ref{thm Uhat} i), whereas (3) and (4) give statements ii) and iii) respectively; more details on the proofs of Steps (1) - (3) are given in $\S$\ref{sec proof Uhat thm} below.

In (1), it is crucial that the $U$-action is graded by $\GG_m$, that the unipotent stabiliser assumption holds and that the action is adapted in order to apply Proposition \ref{key prop U}. The grading $\GG_m$-action is also used in (2) and (3) to firstly show $X_{\min}/U$ is quasi-projective and then obtain a projective quotient of $X_{\min} \setminus U Z_{\min}$ as a closed subvariety of $\PP(W)/\!/\GG_m$. In (3), in order for the induced $\GG_m$-linearisation on $\PP(W)$ to be adapted, we need the minimal weight $\omega_{\min}$ for the original linearised action to be negative but very small (see the proof of Proposition \ref{prop proof step 3}); this leads to the following notion.

\begin{defn}[Well-adapted linearisation]\label{def well-adapted}
Let $ G = U \rtimes R$ be a group with unipotent radical graded by $\GG_m < Z(R)$ acting linearly on $X = \PP(V)$. We say this linearised action is \emph{well-adapted} if there is $0 < \epsilon <\!< 1$ such that the $\GG_m$-weights on $V$ satisfy
\[ - \epsilon < \omega_{\min} = \omega_0 < 0 < \omega_1 < \dots < \omega_n. \]
\end{defn} 

Let us present a simple example for matrices up to conjugation extending Example \ref{ex conj 2 x 2}.

\begin{ex}[To appear in upcoming joint work with E. Hamilton and J. Jackson]
Consider the upper triangular Borel subgroup $\hU = \GG_a \rtimes_2 \GG_m < \SL_2$ acting by conjugation on $\Mat_{2 \times 2}$. To apply Theorem \ref{thm Uhat}, we consider the projective embedding $\Mat_{2 \times 2} \hookrightarrow X:=\PP(\Mat_{2 \times 2} \oplus k)$, with trivial action on $k$. The minimal weight space is 1-dimensional and spanned by the elementary matrix $E_{21}$, so $Z_{\min} = \{ * \} = \PP(kE_{21})$ is contained at infinity (i.e.\ in $\PP(\Mat_{2 \times 2})$), and $X_{\min} = \{ [A:z] : a_{21} \neq 0 \}$. The unique point in $Z_{\min}$ has trivial $\GG_a$-stabiliser; thus \ref{condU0} holds. By twisting the linearisation to make it adapted, we obtain a geometric $\hU$-quotient
\[ X^{\hU-s} = X_{\min} \setminus U Z_{\min} \ra \PP(1,1,2), \quad [A:z] \mapsto [z: \tr A : \det A ]. \]
As $UZ_{\min}$ is contained at infinity, $\Mat_{2 \times 2} \cap X^{\hU-s} = \Mat_{2 \times 2} \cap X_{\min} = (\Mat_{2 \times 2})_{a_{21}}$ and we obtain a \emph{geometric} $\hU$-quotient of matrices whose bottom left entry is non-zero
\[ (\Mat_{2 \times 2})_{a_{21}} \ra \AA^2, \quad A \mapsto (\tr A,\det A).\]
\end{ex}

Suppose that $G = U \rtimes R$ acts on $X$; then the projective geometric $\hU$-quotient $X/\!/\hU$ of Theorem \ref{thm Uhat} has a residual action of the reductive group $\overline{R} = R /\GG_m$. There is a quotient
\[ X_{\min} \setminus UZ_{\min} \stackrel{q_{\hU}}{\longrightarrow} X/\!/\hU \dashrightarrow X/\!/G:= (X/\!/\hU)/\!/\overline{R},\]
where the second morphism is the reductive GIT quotient; this gives a projective and good $G$-quotient of the open set $q_{\hU}^{-1}((X/\!/\hU)^{\overline{R}-ss})$. The key challenge is to determine this preimage in terms of the original action on $X$. This is easiest when \ref{condR0} holds (i.e.\ semistability coincides with stability for the $\overline{R}$-action on $Z_{\min}$); in this case, we define the following semistable sets.

\begin{defn}[Non-reductive stable set]
Let $ G = U \rtimes R$ be a group with unipotent radical graded by $\GG_m < Z(R)$ acting linearly on $X = \PP(V)$. If the linearisation is well-adapted and both \ref{condU0} and \ref{condR0} hold, then we define the \emph{$G$-stable set} by
\[ X^{G-s}:=X_{\min}^{\overline{R}-ss} \setminus U Z_{\min}^{\overline{R}-ss} = p^{-1}(Z_{\min}^{\overline{R}-ss}) \setminus U Z_{\min}^{\overline{R}-ss}. \]
\end{defn}

\begin{rmk}
Since \ref{condU0} holds for the linear action of the graded unipotent group $\hU$ on $X = \PP(V)$, the $U$-sweep of $Z_{\min}$ is a closed subvariety of $X_{\min}$ by \cite[Lemma 5.4]{BDHK_proj}. Consequently, the $\hU$-stable locus $X^{\hU-s} = X_{\min} \setminus UZ_{\min}$ is open in $X$, and so is the $G$-stable locus.
\end{rmk}

The next result is a special case of \cite[Theorem 2.20]{BDHK_proj} as stated in \cite[Theorem 2.28]{HJ}, where the reductive stabiliser assumption \ref{condR0} is used to describe the preimage of the reductive (semi)stable locus by using the reductive Hilbert-Mumford criterion and comparing the torus weight sets of points $x \in X_{\min}$ and their images under $q_U$. Although some torus weights are lost on applying $q_U$, all torus weights corresponding to maximal grading $\GG_m$-weights survive, and if $x \in U Z_{\min}$ then it has at least one non-minimal weight; these observations together with the assumption that semistability coincides with stability for $\overline{R}$ on $Z_{\min}$ are central in the proof.

\begin{thm}[Construction of non-reductive GIT quotients, {\cite[Theorem 4.28]{BK_momentmap}}]
Let $ G = U \rtimes R$ be a group with unipotent radical graded by $\GG_m < Z(R)$ acting linearly on $X = \PP(V)$. If the linearisation is well-adapted and both \ref{condU0} and \ref{condR0} hold, then there is a projective and geometric $G$-quotient
\[ X^{G-s}=X_{\min}^{\overline{R}-ss} \setminus U Z_{\min}^{\overline{R}-ss}  \longrightarrow X/\!/G,\]
which coincides with the Proj construction associated to the (finitely generated) invariant ring.
\end{thm}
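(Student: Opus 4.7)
The plan is to proceed in two stages: first apply the $\hU$-Theorem to construct a projective geometric $\hU$-quotient, then take a classical reductive GIT quotient of the result by the residual reductive group $\overline{R} = R/\GG_m$. Under the given hypotheses, Theorem \ref{thm Uhat}(ii)--(iii) gives a projective geometric $\hU$-quotient
$$q_{\hU} : X_{\min} \setminus UZ_{\min} \longrightarrow Y := X/\!/\hU = \proj A^{\hU},$$
where $A^{\hU} := \bigoplus_{r \geq 0} H^0(X,\cO(r))^{\hU}$ is finitely generated (after passing to a sufficiently divisible power of the linearisation). Since $\GG_m$ is central in $R$, the graded ring $A^{\hU}$ carries a rational action of the reductive group $\overline{R}$, inducing an ample $\overline{R}$-linearisation on $Y$. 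Nagata's theorem then yields finite generation of $(A^{\hU})^{\overline{R}} = \bigoplus_r H^0(X,\cO(r))^G$, and I define
$$X/\!/G \;:=\; Y /\!/ \overline{R} \;=\; \proj \bigoplus_r H^0(X,\cO(r))^G,$$
which is automatically projective and matches the Proj description in the statement.

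The main technical step is to identify the preimage $q_{\hU}^{-1}(Y^{\overline{R}-ss})$ with the explicit locus $X^{G-s} = p^{-1}(Z_{\min}^{\overline{R}-ss}) \setminus UZ_{\min}^{\overline{R}-ss}$. I would proceed via the reductive Hilbert--Mumford criterion for $\overline{R}$ on $Y$, translated back to $X$. First observe that $p : X_{\min} \to Z_{\min}$ is $U$-invariant because $\lim_{t \to 0} tut^{-1} = e$ for all $u \in U$ (using the positive grading of $U$ by $\GG_m$), so $UZ_{\min} \subset p^{-1}(Z_{\min})$ and hence $UZ_{\min} \cap p^{-1}(Z_{\min}^{\overline{R}-ss}) = UZ_{\min}^{\overline{R}-ss}$. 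Next, lift any 1-PS $\lambda'$ of $\overline{R}$ to a 1-PS $\lambda$ of $R$ commuting with the grading $\GG_m$, so that $\lambda$ preserves $V_{\min}$ and $Z_{\min}$. The key claim is the equivalence
$$[x] \text{ is } \overline{R}\text{-semistable in } Y \iff p(x) \text{ is } \overline{R}\text{-semistable in } Z_{\min}$$
for any $[x] = q_{\hU}(x)$. This follows by a Hilbert--Mumford weight computation: well-adaptedness places the $\GG_m$-weights in a chamber where the $\hU$-invariant sections of sufficiently high degree which do not vanish at $[x]$ are controlled by the $\GG_m$-maximal component $p(x)$ (using $V_{\min} \subset V^U$), so the HM weight of $[x]$ along $\lambda'$ equals the HM weight of $p(x)$ along $\lambda'$; assumption \ref{condR0}, ensuring $\overline{R}$-semistability coincides with $\overline{R}$-stability on $Z_{\min}$, then makes the equivalence clean. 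Combining the two observations gives $q_{\hU}^{-1}(Y^{\overline{R}-ss}) = X_{\min}^{\overline{R}-ss} \setminus UZ_{\min}^{\overline{R}-ss} = X^{G-s}$.

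To finish, $X^{G-s} \to X/\!/G$ is geometric as a composition of two geometric quotients: $q_{\hU}$ is geometric on its entire domain by Theorem \ref{thm Uhat}(ii), and $Y^{\overline{R}-ss} \to Y/\!/\overline{R}$ is geometric because \ref{condR0} together with the preceding comparison forces $\overline{R}$-semistability to equal $\overline{R}$-stability on $Y$. Each $G$-orbit in $X^{G-s}$ is the $U$-sweep of an $R$-orbit, which maps to an $\overline{R}$-orbit under $q_{\hU}$ and is then separated off by reductive GIT, so fibres of the composite are single $G$-orbits. The hard part of the whole argument is the Hilbert--Mumford weight comparison: making the claim precise requires a careful analysis of which $\hU$-invariant sections survive on the image, and this is where both \ref{condU0} (so $q_{\hU}$ has the expected local structure near $Z_{\min}$) and well-adaptedness (so the minimal-weight sections dominate) are genuinely used.
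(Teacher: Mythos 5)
Your proposal follows essentially the same route as the paper: first construct the projective geometric $\hU$-quotient via Theorem \ref{thm Uhat}, then take the reductive GIT quotient by $\overline{R}=R/\GG_m$, with the key technical step being the identification of $q_{\hU}^{-1}\bigl((X/\!/\hU)^{\overline{R}-ss}\bigr)$ with $X^{G-s}$ via a Hilbert--Mumford weight comparison (using that the minimal grading-$\GG_m$-weight components survive the passage to the $U$-quotient, together with \ref{condU0}, well-adaptedness and \ref{condR0}). This is precisely the strategy the paper sketches around the theorem statement, deferring the detailed weight comparison to the cited sources, and you correctly single out that comparison as the genuinely hard step.
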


\subsection{Overview of the proof}\label{sec proof Uhat thm}

We provide some details on the proof of Theorem \ref{thm Uhat} i) and ii). The structure of the proof is as follows.

\begin{enumerate}
\item Using Proposition \ref{key prop U}, construct a geometric $U$-quotient of $X_{\min}$ (see Proposition \ref{prop proof step 1}).
\item Construct $X_{\min}/U \hookrightarrow \PP(W)$ to show $X_{\min}/U$ is quasi-projective (see Proposition \ref{prop proof step 2}).
\item Inside $\PP(W)/\!/\GG_m$, construct a geometric $\hU$-quotient of $X^{\hU-s}$ (see Proposition \ref{prop proof step 3}).
\end{enumerate}
The first step relies on the Key Proposition (Proposition \ref{key prop}) about $\GG_a$-slices via $\GG_m$-gradings.

\begin{prop}\label{prop proof step 1}
Let $ \hU = U \rtimes \GG_m$ be a graded unipotent group acting linearly on $X = \PP(V)$. If the linearised action is adapted and \ref{condU0} holds, then there is a geometric $U$-quotient of $X_{\min}$.
\end{prop}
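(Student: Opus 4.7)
The plan is to build the quotient locally on a $\hU$-stable affine open cover of $X_{\min}$ and glue, using Proposition~\ref{key prop U} on each piece. Concretely, I would pick a basis of the maximum $\GG_m$-weight space $V^*_{\max} \subset V^* = H^0(\PP(V),\cO(1))$; for each such $\sigma$ the basic open $X_\sigma = \{[v] \in \PP(V) : \sigma(v) \neq 0\}$ is affine and, since $\sigma$ annihilates every weight space $V_{\omega_i}$ with $i>0$, the condition $\sigma(v) \neq 0$ forces $v_{\omega_0} \neq 0$, so $X_\sigma \subset X_{\min}$. Conversely every $x \in X_{\min}$ has some non-zero $v_{\omega_0}$, hence lies in $X_\sigma$ for some $\sigma$ from our basis, and so these sets form an affine open cover of $X_{\min}$.

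I next want each $X_\sigma$ to be $\hU$-invariant so that Proposition~\ref{key prop U} applies. The $\GG_m$-invariance follows from $\sigma$ being a $\GG_m$-weight vector; for $U$-invariance I would argue that $\sigma$ itself is $U$-fixed. Since $\GG_m$ acts on $\Lie U$ with strictly positive weights, for $X \in \Lie U$ of weight $k>0$ and $\phi \in V^*_{\max}$ of weight $-\omega_{\min}$, the element $X \cdot \phi \in V^*$ has weight $k - \omega_{\min} > -\omega_{\min}$; but no such weight exists on $V^*$, so $X \cdot \phi = 0$. Thus $\Lie U$ annihilates $V^*_{\max}$ and, in characteristic zero, $U = \exp(\Lie U)$ acts trivially on $V^*_{\max}$. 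I then verify the hypotheses of Proposition~\ref{key prop U} on each $X_\sigma$: for $x \in X_\sigma$ the limit $\lim_{t\to 0} t \cdot x = [v_{\omega_0}]$ exists and lies in $Z_\sigma := Z_{\min} \cap X_\sigma$; the unipotent stabiliser assumption~\ref{condU0} gives $\dim \Stab_U(z) = 0$ for $z \in Z_\sigma$, and in characteristic zero a zero-dimensional closed subgroup of a connected unipotent group is trivial. Proposition~\ref{key prop U} therefore produces a trivial geometric $U$-quotient $q_\sigma : X_\sigma \to X_\sigma/U := \Spec \cO(X_\sigma)^U$ with finitely generated invariant ring.

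The main obstacle, and the reason for choosing this particular cover, is the gluing. For $\sigma, \sigma' \in V^*_{\max}$ the ratio $\sigma'/\sigma$ is a $U$-invariant regular function on $X_\sigma$ (numerator and denominator are both $U$-invariant by the previous paragraph), whose non-vanishing locus is $X_\sigma \cap X_{\sigma'}$. Triviality of the $U$-bundle $X_\sigma \to X_\sigma/U$ identifies $X_\sigma \cap X_{\sigma'}$ with the preimage of the affine open $(X_\sigma/U)_{\sigma'/\sigma} \subset X_\sigma/U$, and symmetrically with the preimage of $(X_{\sigma'}/U)_{\sigma/\sigma'} \subset X_{\sigma'}/U$. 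The universal property of geometric quotients then yields a canonical isomorphism of these two open affine pieces, with the cocycle condition on triple overlaps following from the same universal property. Gluing the $q_\sigma$ along this cover produces a geometric $U$-quotient $q_U : X_{\min} \to X_{\min}/U$. The technical heart of the argument, and the point where the positive grading and adaptedness enter decisively, is the $U$-invariance of the maximal weight sections $\sigma$; once that is in hand everything else is essentially formal.
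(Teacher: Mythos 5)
Your proof is correct and follows essentially the same route as the paper: cover $X_{\min}$ by the affine opens $X_\sigma$ associated to maximal-weight sections, apply Proposition~\ref{key prop U} to each, and glue the resulting trivial $U$-quotients. The extra details you supply (the Lie-algebra argument that $U$ fixes $V^*_{\max}$, the passage from $\dim\Stab_U(z)=0$ to triviality of the stabiliser, and the explicit gluing via the $U$-invariant ratios $\sigma'/\sigma$) are points the paper delegates to a remark or leaves implicit, and they are all handled correctly.
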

\begin{proof}
Since $X_{\min}$ is the union of the open affine varieties $X_{\sigma}$ over  ${\sigma \in H^0(X,\cO(1))_{\max}}$, we will construct a geometric $U$-quotient by gluing trivial quotients $X_{\sigma} \ra X_{\sigma}/U = \Spec \cO(X_\sigma)^U$ which are constructed by applying Proposition \ref{key prop U} to each affine $\hU$-variety $X_{\sigma}$ for $\sigma \in H^0(X,\cO(1))_{\max}$. 

In fact, by choosing a basis of $V$ consisting of $\GG_m$-weight vectors, which gives an identification $X \cong \PP^n$, it suffices to construct these trivial quotients in the case where $\sigma = x_i \in H^0(X,\cO(1))_{\max}$ is a coordinate function. Then $X_{\sigma} = \PP(V)_{x_i} \cong \AA^n$ has coordinates $x_j/x_i$. Since $x_i \in H^0(X,\cO(1))_{\max}$, its $\GG_m$-weight is $-\omega_{\min}$ (recall that $\omega_{\min}$ is the minimal weight in $V$ and $V = H^0(X,\cO(1))^*$). Hence the weights of the $\GG_m$-action on $X_{\sigma} = \PP(V)_{x_i} \cong \AA^n$ are of the form $\omega_j - \omega_{\min} \geq 0$, where this inequality holds due to the linearised action being adapted. In particular, the flow under $\GG_m$ as $t \ra 0$ exists for all points in $X_{\sigma}$. Since $Z_{\sigma}  \subset Z_{\min}$, the unipotent stabiliser assumption \ref{condU0} implies that the corresponding stabiliser assumption in Proposition \ref{key prop U} holds; hence we obtain the claimed trivial $U$-quotient $X_{\sigma} \ra X_{\sigma}/U = \Spec \cO(X_\sigma)^U$.
\end{proof}

The quotient obtained from this gluing construction is a priori just an abstract scheme, but the next result shows it is in fact quasi-projective. This result is \cite[Lemma 7.6]{BDHK_proj}.

\begin{prop}\label{prop proof step 2}
Let $ \hU $ be a graded unipotent group acting linearly on $X = \PP(V)$ such that the linearised action is adapted and \ref{condU0} holds. There exists a positive integer $r$ and an embedding
\[ X_{\min}/U \hookrightarrow \PP(W)_{\min} \hookrightarrow \PP(W) \]
where $W:=(H^0(X,\cO(r))^U)^*$, the first morphism is a closed immersion and the second morphism is the open inclusion of the minimal attracting set for the induced $\GG_m$-action on $W$.
\end{prop}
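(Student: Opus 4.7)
The strategy is to define, for suitable $r$, a $\GG_m$-equivariant rational map $\phi_r: X \dashrightarrow \PP(W_r)$ with $W_r := (H^0(X,\cO(r))^U)^*$ via a basis of $U$-invariant degree-$r$ sections, and to check that it restricts to a closed immersion of $X_{\min}/U$ into $\PP(W_r)_{\min}$. The key structural observation is that in any finite-dimensional representation of $\hU = U \rtimes \GG_m$ with $\GG_m$ grading $U$ by strictly positive weights, the maximal $\GG_m$-weight subspace is necessarily $U$-fixed (since $U$ strictly raises $\GG_m$-weight). In particular, $H^0(X,\cO(r))_{\max} \subseteq H^0(X,\cO(r))^U$ for every $r$, and the maximal $\GG_m$-weight on $H^0(X,\cO(r))^U$ equals that on the full space $H^0(X,\cO(r))$, namely $-r\omega_{\min}$. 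Thus the minimal $\GG_m$-weight on $W_r$ is $r\omega_{\min}$, with $(W_r)_{\min}$ dual to $H^0(X,\cO(r))_{\max}$.

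Since $X_{\min}$ is the common non-vanishing locus of $H^0(X,\cO(1))_{\max}$ and $r$-th powers of such sections lie in $H^0(X,\cO(r))_{\max} \subseteq H^0(X,\cO(r))^U$, the rational map $\phi_r$ is defined on all of $X_{\min}$. Moreover, for any $x \in X_{\min}$ some such $r$-th power of a maximal-weight section is nonzero at $x$, so $r\omega_{\min} \in \wt_{\GG_m}(\phi_r(x))$; hence $\phi_r(X_{\min}) \subseteq \PP(W_r)_{\min}$. As all coordinate sections used to build $\phi_r$ are $U$-invariant, $\phi_r$ is $U$-invariant and, by the geometric $U$-quotient of Proposition \ref{prop proof step 1}, factors through a morphism $\bar{\phi}_r : X_{\min}/U \to \PP(W_r)_{\min}$.

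It remains to choose $r$ so that $\bar{\phi}_r$ is a closed immersion. Fix a basis $\sigma_1, \dots, \sigma_N$ of $H^0(X,\cO(1))_{\max}$; these cover $X_{\min}$ by affine opens $X_{\sigma_i}$, and by the proof of Proposition \ref{prop proof step 1} each trivial affine quotient $X_{\sigma_i} \to \Spec \cO(X_{\sigma_i})^U$ has finitely generated coordinate ring (via Proposition \ref{key prop U}). Since $\sigma_i$ is $U$-invariant, every generator of $\cO(X_{\sigma_i})^U$ can be written as $f / \sigma_i^{r_\alpha}$ for some $f \in H^0(X,\cO(r_\alpha))^U$. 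Let $r$ be a common multiple of all the (finitely many) exponents $r_\alpha$ arising as $i$ and $\alpha$ vary; then $\sigma_i^{r - r_\alpha} f \in H^0(X,\cO(r))^U$ together with $\sigma_i^r$ pull back under $\phi_r$ to generators of $\cO(X_{\sigma_i})^U$, so the induced map $X_{\sigma_i}/U \hookrightarrow \{\sigma_i^r \neq 0\} \subset \PP(W_r)_{\min}$ is a closed immersion of affines. Gluing these local closed immersions produces the desired global closed immersion $X_{\min}/U \hookrightarrow \PP(W_r)_{\min}$.

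The main subtlety is the uniform choice of $r$: although each chart $X_{\sigma_i}$ separately admits a finitely generated ring of $U$-invariants, we need a single $r$ that simultaneously produces generators on every chart, and this is secured by the finite dimensionality of $H^0(X,\cO(1))_{\max}$ together with finite generation on each chart. A secondary subtlety is verifying $U$-invariance of the sections $\sigma_i^{r-r_\alpha} f$; this uses precisely that $\sigma_i \in H^0(X,\cO(1))_{\max}$ is itself $U$-invariant, which is the same phenomenon that underlies the very existence of the embedding.
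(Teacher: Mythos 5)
Your construction is the same as the paper's: use the inclusion $H^0(X,\cO(r))^U\hookrightarrow H^0(X,\cO(r))$ to define a rational map to $\PP(W)$, note that maximal $\GG_m$-weight sections are automatically $U$-invariant so the map is defined on $X_{\min}$ and factors through $q_U$ into $\PP(W)_{\min}$, and choose $r$ so that on each chart $X_{\sigma_i}$ the ring $\cO(X_{\sigma_i})^U$ is generated by fractions $f/\sigma_i^r$ with $f\in H^0(X,\cO(r))^U$. All of that is correct and matches the paper. However, your final step has a genuine gap: being a closed immersion is local on the \emph{target}, not the source, and the opens $\{\Sigma_i\neq 0\}$ with $\Sigma_i$ corresponding to the pure powers $\sigma_i^r$ do not cover $\PP(W)_{\min}$. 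The minimal attracting set $\PP(W)_{\min}$ is the union of $\PP(W)_\Sigma$ over \emph{all} $\Sigma\in H^0(\PP(W),\cO(1))_{\max}$, and this maximal weight space is spanned by the coordinates $\Sigma_{\underline{k}}$ corresponding to all monomials $\sigma^{\underline{k}}=\prod_i\sigma_i^{k_i}$ with $\sum k_i=r$; since these are linearly independent coordinates on $W$, for $l,r\geq 2$ there are points of $\PP(W)_{\min}$ where every $\Sigma_i=\Sigma_{(0,\dots,r,\dots,0)}$ vanishes but some mixed $\Sigma_{\underline{k}}$ does not. Gluing your local closed immersions over the cover $\{X_{\sigma_i}/U\}$ of the \emph{source} therefore only yields a locally closed immersion $X_{\min}/U\hookrightarrow\PP(W)_{\min}$; it does not show the image is closed in $\PP(W)_{\min}$, which is exactly what the proposition asserts (and what Lemma \ref{lemma proof step 3} and Proposition \ref{prop proof step 3} later rely on when identifying $\overline{X_{\min}/U}^{\GG_m-ss}$ inside $\PP(W)$).

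The missing step is to check the remaining charts of the target: for each partition $\underline{k}$ of $r$ one has $\overline{\phi}^{-1}(\PP(W)_{\Sigma_{\underline{k}}})=X_{\sigma^{\underline{k}}}/U=\Spec\cO(X_{\sigma^{\underline{k}}})^U$, and one must verify that $\Sym(W^*)_{\Sigma_{\underline{k}}}\ra\cO(X)_{\sigma^{\underline{k}}}$ is surjective. This does follow from the surjectivity of $\Sym(W^*)_{\Sigma_i}\ra\cO(X)_{\sigma_i}$ for each $i$ that your choice of $r$ already provides (the chart $X_{\sigma^{\underline{k}}}$ is the intersection of the $X_{\sigma_i}$ with $k_i>0$, and further localisation preserves surjectivity), so the repair is short --- but it is an additional observation your argument needs and does not currently contain.
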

\begin{proof}
Fix a basis $\sigma_1,\dots, \sigma_l$ of $H^0(X,\cO(1))_{\max}$ such that $\cO(X_{\sigma_i})$ is finitely generated (see the proof of Proposition \ref{prop proof step 1} above). Then there exists a positive integer $r$, such that for $1 \leq i \leq l$, we have that $R(X,\cO(1))^U_{(\sigma_i^r)}$ is generated by $\{ \frac{f}{\sigma_i^r} : f \in H^0(X,\cO(r))^U \}$. Let $\Sigma_i \in H^0(\PP(W),\cO(1)) \cong H^0(X,\cO(r))$ correspond to $\sigma_i^r$; then $\Sym(W^*)_{\Sigma_i} \ra \cO(X)_{\sigma_i}$ is surjective. 

The inclusion $H^0(X,\cO(r))^U \hookrightarrow H^0(X,\cO(r))$ induces a rational map $\phi : X \dashrightarrow \PP(W)$, which is well-defined on $X_{\min}$, as maximal sections are $U$-invariant, and $\phi|_{X_{\min}} = \overline{\phi} \circ q_U$ for
\[ \overline{\phi} : X_{\min} /U \ra \PP(W). \]
Since $\Spec \cO(X_{\sigma_i})^U$ cover $X_{\min} /U$, we see that $\overline{\phi}$ factors via
\[ \PP(W)_{\min} = \bigcup_{\Sigma \in H^0(\PP(W),\cO(1))_{\max}} \PP(W)_\Sigma. \]
For a partition $\underline{k} = (k_1,\dots,k_l)$ of $r$, the section $\sigma^{\underline{k}}:= \prod_{i=1}^l \sigma_i^{k_i} \in H^0(X,\cO(r))_{\max}$ corresponds to $\Sigma_{\underline{k}} \in H^0(\PP(W),\cO(1))_{\max}$. 

Since being a closed immersion is a local property on the target, $\overline{\phi} : X_{\min} /U \ra \PP(W)_{\min}$ is a closed immersion if $\overline{\phi}_{\underline{k}} : \Spec \cO(X_{\sigma^{\underline{k}}})^U \ra \PP(W)_{\Sigma_{\underline{k}}}$ is a closed immersion for each partition $\underline{k}$, or equivalently $\Sym(W^*)_{\Sigma_{\underline{k}}} \ra \cO(X)_{\sigma^{\underline{k}}}$ is surjective. This last statement is deduced from the fact that $\Sym(W^*)_{\Sigma_i} \ra \cO(X)_{\sigma_i}$ is surjective for each $i$ by the choice of $r$.
\end{proof}

The next two results are described in the discussion after Lemma 7.7 in \cite{BDHK_proj}.

\begin{lemma}\label{lemma proof step 3}
For an adapted linear action of a graded unipotent group $\hU$ on $X = \PP(V)$, assume  \ref{condU0} holds; thus there is a geometric quotient $q_U : X_{\min} \ra X_{\min}/U$, which is locally closed in $\PP(W)$ by Proposition \ref{prop proof step 3}. Let $x \in X_{\min}$; then $q_U \in \PP(W_{\min})$ if and only if $x \in U Z_{\min}$.
\end{lemma}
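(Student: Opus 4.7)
The plan is to exploit the $\GG_m$-equivariance of $q_U$ together with the fact that $\PP(W_{\min})$ is exactly the $\GG_m$-fixed locus inside $\PP(W)_{\min}$. Since $U$ is normal in $\hU = U \rtimes \GG_m$, the residual $\GG_m$-action on $X$ descends to $X_{\min}/U$, making $q_U$ equivariant: indeed, for $t \in \GG_m$ and $u \in U$, one has $t \cdot (u\cdot x) = (tut^{-1})\cdot (t\cdot x)$ with $tut^{-1} \in U$, so $q_U(t\cdot u\cdot x) = q_U(t\cdot x)$. The embedding $\overline{\phi} : X_{\min}/U \hookrightarrow \PP(W)_{\min}$ from Proposition \ref{prop proof step 2} is $\GG_m$-equivariant as well, since it is induced by the $\GG_m$-stable inclusion $H^0(X,\cO(r))^U \subset H^0(X,\cO(r))$.

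The Bia{\l}ynicki-Birula retraction $p_W : \PP(W)_{\min} \to \PP(W_{\min})$ sends $y \mapsto \lim_{t \to 0} t \cdot y$, and its image is exactly the $\GG_m$-fixed locus of $\PP(W)_{\min}$. Hence for $y \in \PP(W)_{\min}$, we have $y \in \PP(W_{\min})$ iff $p_W(y) = y$ iff $y$ is $\GG_m$-fixed. Applying this to $y = q_U(x)$ and using equivariance of both $q_U$ and $\overline{\phi}$ together with the uniqueness of limits in the separated scheme $\PP(W)$, we obtain $p_W(q_U(x)) = q_U(p(x))$, where $p(x) = \lim_{t \to 0} t\cdot x \in Z_{\min}$. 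Therefore $q_U(x) \in \PP(W_{\min})$ if and only if $q_U(x) = q_U(p(x))$.

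Since $q_U$ is a geometric $U$-quotient, its fibres are exactly the $U$-orbits, so $q_U(x) = q_U(p(x))$ if and only if $x$ and $p(x)$ lie in the same $U$-orbit; as $p(x) \in Z_{\min}$, this amounts to $x \in U\cdot p(x) \subset UZ_{\min}$. Conversely, if $x = u\cdot z$ with $u \in U$ and $z \in Z_{\min}$, then $q_U(x) = q_U(z)$; since $z$ is $\GG_m$-fixed, so is $q_U(z)$, whence $q_U(x) \in \PP(W_{\min})$.

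There is no substantial obstacle here; the argument is essentially formal once one has the three ingredients (equivariance of $q_U$, equivariance of $\overline{\phi}$, and the description of $\PP(W_{\min})$ as the fixed locus in $\PP(W)_{\min}$). The only point to check carefully is the compatibility $p_W \circ q_U = q_U \circ p$, which follows from $\GG_m$-equivariance and separatedness of the target.
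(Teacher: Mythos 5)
Your proof is correct and follows essentially the same route as the paper: identify $\PP(W_{\min})$ with the $\GG_m$-fixed locus inside $\PP(W)_{\min}$, use $\GG_m$-equivariance of $q_U$ to get $\lim_{t\to 0} t\cdot q_U(x) = q_U(p(x))$, and then invoke the geometric-quotient property to translate $q_U(x)=q_U(p(x))$ into $x\in U\cdot p(x)\subset UZ_{\min}$. You simply make explicit a few steps the paper leaves implicit (normality of $U$ giving equivariance of $q_U$, and the separatedness/uniqueness-of-limits argument), which is fine.
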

\begin{proof}
For $q_U(x) \in X_{\min}/U \hookrightarrow \PP(W)_{\min}$, we have that $q_U \in \PP(W_{\min})$ if and only if 
\[ q_U(x) =\lim_{t \ra 0} t \cdot q_u(x) = q_U(\lim_{t \ra 0} t \cdot x) = q_U(p(x)), \]
or, as $q_U$ is a geometric quotient, equivalently $U \cdot x = U \cdot p(x)$, i.e.\ $x \in U Z_{\min}$.
\end{proof}

\begin{prop}\label{prop proof step 3}
For an adapted linear action of a graded unipotent group $\hU$ on $X = \PP(V)$, assume  \ref{condU0} holds. There is a well-adapted rational twist of the $\hU$-linearisation on $X$ such that the induced $\GG_m$-linearisation on $\PP(W)$ is adapted and
\[ \PP(W)^{\GG_m-(s)s} = \PP(W)_{\min} \setminus \PP(W_{\min}).\]
Furthermore, the preimage under $q_U$ of the $\GG_m$-stable locus of the closure of $X_{\min}/U$ in $\PP(W)$
\[ q_U^{-1}(\overline{X_{\min}/U}^{\GG_m-(s)s}) = X_{\min} \setminus U Z_{\min} \]
admits a projective geometric $\hU$-quotient $X_{\min} \setminus U Z_{\min} \ra (\overline{X_{\min}/U})/\!/\GG_m$.
\end{prop}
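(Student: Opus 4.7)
The plan is to construct a suitable twist and then combine the embedding from Proposition \ref{prop proof step 2} with reductive GIT for the residual $\GG_m$-action. First I would twist the $\hU$-linearisation by a rational character $\chi$ so that, after the shift, the minimum weight $\omega_{\min}$ is a small negative number $-\epsilon$ (this is the definition of well-adapted). The induced $\GG_m$-weights on $W = (H^0(X,\cO(r))^U)^*$ are sums of $r$ (twisted) weights on $V$, negated in the standard duality, so they take the form $\sum_{i=0}^n k_i\omega_i$ with $\sum k_i = r$ and $k_i\geq 0$, where $\omega_0 = -\epsilon$ and $\omega_1,\dots,\omega_n > 0$. The minimum of these is $r\omega_0 = -r\epsilon$, and a direct inspection shows that the next smallest is $(r-1)\omega_0 + \omega_1$; for $\epsilon$ small this is strictly positive. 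Hence after such a twist all non-minimum weights on $W$ are strictly positive, which is exactly the condition for the induced $\GG_m$-linearisation on $\PP(W)$ to be adapted.

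Next, applying the Hilbert--Mumford criterion for $\GG_m$-actions (Proposition \ref{prop HM Gm}), a point $w \in \PP(W)$ is $\GG_m$-semistable if and only if $0$ lies in the convex hull of $\wt_{\GG_m}(w)$. Given that the only non-positive weight is the strictly negative minimum $r\omega_0$, this forces the minimum weight to appear in $\wt_{\GG_m}(w)$ (so $w \in \PP(W)_{\min}$) and at least one other (strictly positive) weight to appear (so $w \notin \PP(W_{\min})$); in that case $0$ lies in the interior of the convex hull, so semistable coincides with stable. Therefore
\[ \PP(W)^{\GG_m-ss} = \PP(W)^{\GG_m-s} = \PP(W)_{\min} \setminus \PP(W_{\min}). \]

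For the preimage identity, Proposition \ref{prop proof step 2} exhibits $X_{\min}/U$ as a closed subscheme of $\PP(W)_{\min}$, so $\overline{X_{\min}/U} \cap \PP(W)_{\min} = X_{\min}/U$, where the closure is taken in $\PP(W)$. Combined with the preceding description of the $\GG_m$-stable locus, this gives $\overline{X_{\min}/U}^{\GG_m-s} = (X_{\min}/U) \setminus \PP(W_{\min})$. By Lemma \ref{lemma proof step 3} and the fact that $q_U$ is a geometric quotient, pulling back under $q_U$ yields the claimed identity $q_U^{-1}(\overline{X_{\min}/U}^{\GG_m-s}) = X_{\min} \setminus UZ_{\min}$. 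The reductive GIT quotient of the projective variety $\overline{X_{\min}/U}$ by $\GG_m$ is projective and restricts to a geometric quotient on the stable locus; precomposing with $q_U$ then produces the desired projective geometric $\hU$-quotient of $X_{\min} \setminus UZ_{\min}$.

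The main obstacle is the weight analysis in the first step: one must verify that the $\GG_m$-weights on $\PP(W)$ acquire precisely the adapted shape after a rational twist of the $\hU$-linearisation on $X$. This is exactly why the "well-adapted" condition (rather than merely "adapted") is needed, and also why a rational rather than integral character is required in general, since the shift $\epsilon$ must be made arbitrarily small in comparison to the positive gap $\omega_1 - \omega_0$. Once this step is settled, the remaining assembly of the quotient from the embedding and the reductive $\GG_m$-quotient is essentially formal.
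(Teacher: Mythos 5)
Your proposal is correct and follows essentially the same route as the paper: twist by a rational character so the minimal weight becomes $-\epsilon$ with $\epsilon$ small relative to $r$ (the paper phrases the bound as $r\epsilon<1$, using integrality of the untwisted weights on $W$ in place of your $(r-1)\omega_0+\omega_1$ estimate), deduce that the induced linearisation on $\PP(W)$ is adapted so that $\PP(W)^{\GG_m-(s)s}=\PP(W)_{\min}\setminus\PP(W_{\min})$, and then identify the preimage via Proposition \ref{prop proof step 2} and Lemma \ref{lemma proof step 3}. The only caveat worth noting is that the weights on $W=(H^0(X,\cO(r))^U)^*$ are a priori only a subset of the sums $\sum k_i\omega_i$, so your "next smallest weight" is really a lower bound — but that is all the argument needs.
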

\begin{proof}
Recall that $\omega_{\min}$ is the minimal weight on $V=H^0(X,\cO(1))^*$ and so $r \omega_{\min}$ is the minimal weight on $W:=(H^0(X,\cO(r))^U)^*$. Pick $\epsilon >0$ so that $r \epsilon < 1$. Let $\chi : \hU \ra \GG_m$ be the rational character corresponding to $\omega_{\min} + \epsilon \in \QQ$. Then the $\GG_m$-weights $\alpha_j$ on $\cO_{\PP(W)}(1)^{r\chi}$ satisfy
\[ \alpha_{\min} = \alpha_0 = r \omega_{\min} - r \chi = - r \epsilon < 0 < 1 - r \epsilon = r \omega_{\min} + 1 - r \chi \leq  \alpha_1 < \cdots < \alpha_{\max} \]
so the induced $\GG_m$-linearisation $\cO_{\PP(W)}(1)$ is adapted and $\PP(W)^{\GG_m-(s)s} = \PP(W)_{\min} \setminus \PP(W_{\min})$. 
This linearisation on $\PP(W)$ is induced from the well-adapted twisted linearisation $\cO_X(1)^\chi$. The final claim follows from Lemma \ref{lemma proof step 3}, as $\overline{X_{\min}/U}^{\GG_m-(s)s} = (X_{\min} /U) \setminus ((X_{\min} /U) \cap \PP(W_{\min}))$.
\end{proof}

The proof of the finite generation of an appropriate power of the linearisation is given in the discussion proceeding Corollary 7.10 in \cite{BDHK_proj}.

\section{Recent applications of non-reductive GIT}

In this section we will give an overview of some recent applications of non-reductive GIT.

\subsection{Moduli of jets of map germs and hyperbolicity}\label{sec hyperbolicity}

B\'{e}rczi and Kirwan \cite{BK_hyperbolicity} used non-reductive GIT to construct and study compactifications of spaces of invariant jet differentials in order to prove polynomial versions of the Green--Griffiths--Lang conjecture and Kobayashi conjecture concerning hyperbolicity properties of generic smooth projective hypersurfaces. Let us outline these conjectures and the approach using non-reductive GIT.

A complex projective manifold $X$ is \emph{Brody hyperbolic} if every holomorphic map $f : \CC \ra X$ is constant. For example, in dimension $1$, a curve is hyperbolic if and only if $g \geq 2$. Hyperbolic varieties are interesting from the point of view of complex geometry and also for their conjectural Diophantine properties (Lang conjectured that if a projective variety defined over $\QQ$ is hyperbolic, then $X(\QQ)$ is finite).

The Kobayashi conjecture predicts that a very general hypersurface $X \subset \PP^{n+1}$ of sufficiently large degree $d_n$ is Brody hyperbolic. Green, Griffiths and Lang conjectured that every projective algebraic variety $X$ of general type is \emph{weakly hyperbolic}; that is, there exists a proper subvariety $Y \subsetneq X$ such that the image of every holomorphic map $f : \CC \ra X$ is contained in $Y$. These conjecture are related by a recent result of Riedl and Yang \cite{RiedlYang}: if the Green--Griffiths--Lang conjecture holds for projective hypersurfaces of dimension $n$ and degree at least $d_n$, then the Kobayashi conjecture is true for projective hypersurfaces of dimension $n$ with degree at least $d_{2n-1}$. The strategy for approaching these conjectures goes back to work of Demailly \cite{Demailly} and Siu \cite{Siu}, which involves studying \emph{invariant jet differentials}; here non-reductive group actions naturally arise as reparametrisation groups.

For a smooth projective complex variety $X$ of dimension $n$, the bundle of $J_kX \ra X$ of \emph{$k$-jet germs in $X$} has fibre over $p \in X$ is given by germs of holomorphic maps $f : (\CC,0) \ra (X,p)$ for fixed local coordinates at $p$ up to the equivalence relation given by equality of the first $k$-derivatives at $0$; thus the fibres can be represented by truncated Taylor expansions or equivalently $k$-tuples of vectors in $\CC^n$ given by the first $k$-derivatives. The transition functions are polynomial, but not linear, so $J_kX \ra X$ is not a vector bundle. The group $\mathrm{Diff}_k$ of regular $k$-jets of maps $(\CC,0) \ra (\CC,0)$ acts fibrewise on $J_kX \ra X$ by reparametrisations;  $\mathrm{Diff}_k$ is naturally an upper triangular subgroup of $\GL_k$ and fortunately is a graded unipotent group
\[ \mathrm{Diff}_k \cong U_k \rtimes \CC^*, \]
where $\dim U_k = k-1$. Green and Griffiths studied algebraic differential operators, which are polynomial functions on $J_kX$, and constructed a sheaf of algebraic differential operators of order $k$ of fixed weighted degree (with respect to the $\CC^*$-weights). Demailly considered a subbundle of jet differentials invariant under reparametrisations from $U_k$. A key tool to finding invariant jet differentials is to produce a projective completion of the fibrewise quotient of $\mathrm{Diff}_k$ acting on the jet bundle $J_kX \ra X$. The projective completion given by B\'{e}rczi and Kirwan \cite{BK_hyperbolicity} uses non-reductive GIT, where a blow-up at $Z_{\min}$, which is just a point, is needed for the unipotent stabiliser assumption to hold. They then use intersection theory for non-reductive GIT quotients (see \cite{BK_momentmap} and $\S$\ref{sec sympl nrGIT} below) to prove a polynomial version of the Green--Griffiths--Lang conjecture.

\begin{thm}[Polynomial Green--Griffiths--Lang Theorem of B\'{e}rczi--Kirwan \cite{BK_hyperbolicity}]
A generic smooth projective hypersurface of dimension $n$ and degree $d \geq 32n^4$ is weakly hyperbolic.
\end{thm}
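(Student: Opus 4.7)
The plan is to follow the Demailly--Siu strategy: produce, for some order $k$ and weight $m$, a non-zero $U_k$-invariant jet differential on $X$ twisted by the inverse of an ample line bundle, and then invoke the fundamental vanishing theorem that every entire holomorphic map $f : \CC \ra X$ must satisfy $P(j^k f) \equiv 0$ for every such differential $P$; this forces $f(\CC)$ into the base locus of these differentials, which is a proper subvariety, proving weak hyperbolicity.

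First, I would construct a projective completion of the fibrewise $\mathrm{Diff}_k$-quotient of the $k$-jet bundle $J_k X \ra X$ by applying the $\hU$-Theorem (Theorem \ref{thm Uhat}) to the fibrewise action of the graded unipotent group $\mathrm{Diff}_k = U_k \rtimes \CC^*$. The assumption \ref{condU0} fails on $J_k X$, since points of $Z_{\min}$ correspond to singular jets with positive-dimensional $U_k$-stabiliser, so I would first perform a sequence of $\mathrm{Diff}_k$-equivariant fibrewise blow-ups along $Z_{\min}$ and its strict transforms, in the spirit of Kirwan's partial desingularisation, to produce a smooth $\mathrm{Diff}_k$-variety $\widetilde{J_k X}$ on which \ref{condU0} holds after a well-adapted twist. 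Non-reductive GIT then yields a projective geometric quotient $\widetilde{J_k X}/\!/\mathrm{Diff}_k \ra X$ carrying a natural tautological line bundle whose direct image on $X$ is the sheaf of $U_k$-invariant jet differentials of weight $m$ on the blown-up jet tower, refining Demailly's classical construction.

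Second, I would produce a non-zero section of this sheaf, twisted by an anti-ample bundle, by computing the leading $m$-asymptotic term of the Euler characteristic and applying an algebraic Morse inequality. This reduces to an intersection-theoretic computation on the non-reductive GIT quotient. Here one uses the non-reductive localisation/residue formulas for $\hU$-quotients developed in \cite{BK_momentmap} to express the relevant intersection number in terms of the Chern classes of $T_X$ together with residue contributions from the exceptional divisors introduced in the blow-up stage. This is where the main obstacle lies: the unipotent residues make the formula considerably more delicate than its reductive Atiyah--Bott counterpart, and extracting a positivity threshold that grows only \emph{polynomially} in $n$ (rather than exponentially, as in earlier Siu/Merker-type bounds) requires a careful combinatorial analysis of the residues combined with the choice $k = n$; the explicit threshold $d \geq 32 n^4$ encodes exactly this optimisation.

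Finally, once positivity of this leading intersection number is established, the Morse inequality provides the desired non-zero global invariant jet differential, and the fundamental vanishing theorem of Green--Griffiths--Siu--Demailly translates this into the weak hyperbolicity of a generic smooth hypersurface $X \subset \PP^{n+1}$ of degree $d \geq 32 n^4$, the ``generic'' condition being needed to ensure that the constructed differential does not degenerate on the hypersurface in question.
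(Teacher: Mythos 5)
Your proposal follows essentially the same route that the paper (and B\'{e}rczi--Kirwan's work it surveys) takes: the Demailly--Siu strategy of producing invariant jet differentials via a non-reductive GIT compactification of the fibrewise $\mathrm{Diff}_k = U_k \rtimes \CC^*$ quotient of $J_kX$, a blow-up to restore the unipotent stabiliser assumption \ref{condU0}, and the moment-map/residue intersection formulas of \cite{BK_momentmap} to extract the polynomial degree bound, finishing with the fundamental vanishing theorem. The only small discrepancy is that in the actual construction $Z_{\min}$ is a single point, so a single blow-up suffices rather than the iterated Kirwan-style desingularisation you describe.
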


By work of Riedl--Yang \cite{RiedlYang}, this gives a polynomial Kobayashi theorem \cite[Theorem 1.4]{BK_hyperbolicity} for generic smooth projective hypersurfaces of dimension $n$ and degree $d \geq 32(2n-1)^4$.

\subsection{Moduli spaces of hypersurfaces in weighted projective orbifolds}\label{sec weighted proj hyp}

One classical application of reductive GIT is to construct moduli spaces of projective hypersurfaces as the GIT quotient of the $\PGL_{n+1}$-action on $\PP(k[x_0,\dots,x_n]_d)$; this gives compactifications of moduli spaces of smooth hypersurfaces, as Mumford proved that any smooth hypersurface $X \subset \PP^n$ of degree $d \geq 3$ is GIT stable when $n > 1$. In general determining precisely which other hypersurfaces are (semi)stable is challenging, even with the Hilbert--Mumford criterion in hand.

The advent of non-reductive GIT enables this to be extended to hypersurfaces in weighted projective spaces and more general projective toric varieties, whose automorphism groups are non-reductive affine algebraic groups and are explicitly described by the work of Cox as quotients of the graded automorphism group of the Cox ring.

\begin{ex}
The weighted projective plane $\PP(1,1,2)$ has automorphism group given by
\[ 1 \lra \GG_m \stackrel{(tI_2,t^2,0,0,0)}{\lra} (\GL_2 \times \GG_m ) \rtimes \GG_a^3 \lra \Aut(\PP(1,1,2)) \lra 1 \]
where the unipotent group appears from automorphisms of the form $z \mapsto z + ax^2 + bxy + c y^2$.

\end{ex}

Fortunately the automorphism groups of weighted projective spaces have graded unipotent radicals; see \cite[Lemma 4.1]{BDHK} and also \cite{Bunnett}. 

Bunnett \cite{Bunnett} studied the application of non-reductive GIT to moduli of weighted projective hypersurfaces and more generally hypersurfaces in toric orbifolds. In a well-formed weighted projective space $\PP(a_0,\dots,a_n)$ he proves \cite[Theorem 5.18]{Bunnett} that any \emph{quasi-smooth} hypersurface (see \cite[$\S$3]{BatyrevCox}) of degree $d \geq 2 + \max\{a_0,\dots,a_n\}$ is stable (in the sense of non-reductive GIT) provided the unipotent stabiliser assumption holds, so that no blow-ups are required. 

In the case of a well-formed weighted projective space $\PP(a_0,\dots,a_n)$, any quasi-smooth hypersurface of degree $d \geq 2 + \max\{a_0,\dots,a_n\}$ has finitely many automorphisms coming from the ambient automorphisms of $\PP(a_0,\dots,a_n)$ by \cite[Theorem 3.13]{Bunnett}. Consequently, the Keel--Mori Theorem gives the existence of a coarse moduli spaces as an algebraic space. However, non-reductive GIT gives the construction of a quasi-projective moduli space.

For a toric variety $X$, there is an $A$-discriminant (see  \cite{GKZ}) for hypersurfaces of class $\alpha$, which vanishes on non-quasi-smooth hypersurfaces (in contrast to the case of projective hypersurfaces, the converse is not necessarily true, as the $A$-discriminant only checks for singularities in the sweep under $G = \Aut_\alpha(X)$ of the torus $T \subset X$, see \cite[Remark 4.11]{Bunnett}). The $A$-discriminant of \cite{GKZ} is interpreted as an invariant section of a twisted linearisation in \cite[Corollary 4.12]{Bunnett}.

Let us explain the non-reductive GIT set-up for hypersurfaces in $\PP(a_0,\dots,a_n)$ of degree $d$. Assume that hypersurfaces of degree $d$ are Cartier divisors (i.e.\ the lowest common multiple of the weights divides $d$). Consider the non-reductive group $G=\Aut(\PP(a_0,\dots,a_n))$ acting on the space $X = \PP(k[x_0,\dots,x_n]_d)$ of weighted degree $d$ homogeneous polynomials. Bunnett proves that quasi-smooth hypersurfaces are contained in the $\hU$-stable set $X_{\min} \setminus U Z_{\min}$ (under the unipotent stabiliser assumption). Using the non-reductive GIT Hilbert--Mumford criterion of \cite{BDHK_proj}, he shows that quasi-smooth hypersurfaces are stable for the action of $G$ assuming that $d$ is a Cartier degree with $d \geq 2 + \max\{a_0,\dots,a_n\}$ and the unipotent stabiliser assumption holds. Furthermore, if the weighted projective space has only two weights, then the unipotent stabiliser assumption holds (see \cite[Proposition 5.9]{Bunnett}).

Bunnett obtains the best results for Cartier hypersurfaces in a rational cone $\PP(1,\dots,1,r)$ of degree $d \geq r +2$ (see \cite[Theorem 5.20]{Bunnett}): he explicitly describes the quasi-smooth locus as the non-vanishing locus of a section (which is obtained by multiplying the $A$-discriminant with a variable) and constructs a $U$-quotient of this open affine variety using Proposition \ref{key prop U}, where the necessary unipotent stabiliser assumption is easily verified. He then directly obtains a geometric quotient of the locus of quasi-smooth projective hypersurfaces, which is a projective over affine variety, because it is constructed as a reductive GIT quotient of the affine $U$-quotient twisted by a character as in $\S$\ref{sec affine GIT twisted char} rather than using the more complicated methods of \cite{BDHK_proj}.

\subsection{Moduli of unstable objects}\label{sec quotient unstable strata}

Recall from $\S$\ref{sec instability} that associated to a linear action of a reductive group $G$ on $\PP(V)$ and a choice of norm, there is an instability stratification 
\begin{equation}\label{instab strat 2}
 \PP(V) = \bigsqcup_{\beta \in \cB} S_\beta 
 \end{equation}
where $S_\beta \cong G \times^{P_\lambda} Y_\beta^{ss}$ for a parabolic subgroup $P_\lambda < G$. A categorical $P_\lambda$-quotient of $Y_\beta^{ss}$, or equivalently a categorical $G$-quotient of $S_\beta$, is given by Proposition \ref{prop cat quotient unstable strata}; however, as explained after this proposition, this is far from being an orbit space as it factors via the retraction $p_\beta : Y_\beta^{ss} \ra Z_\beta^{ss}$ sending a point to its flow under $\lambda$ as $t \ra 0$. Instead, we would like to apply non-reductive GIT to the action of $P_\lambda$ on the closure $\overline{Y_\beta}$ of $Y_\beta \subset X$, where we can twist the linearisation by (a rational multiple of) a character corresponding to $\lambda$ to make it well-adapted. Fortunately, the non-reductive notion of stability precisely picks out the locus we would like and removes (the $P_\lambda$-sweep) of the limit set $Z_{\beta}^{ss}$; see Theorem \ref{thm nrGIT unstable strata} below.

For the parabolic group $P_\lambda = U_\lambda \rtimes L_\lambda$ acting on the blade closure $X=\overline{Y_\beta}$ of an unstable stratum $S_\beta$ as in \eqref{instab strat 2} above, there is a twisted rational linearisation $\cL_{\beta(1+ \epsilon)}$ which is well-adapted. Furthermore, we have that in the non-reductive GIT notation the map $p : X_{\min} \ra Z_{\min}$ coincides with the retraction $p_\beta : Y_\beta \ra Z_\beta$ appearing in the description of the unstable strata. Furthermore, $Z_\beta^{ss}$ is defined to be the semistable locus for $L_\lambda$ with respect to $\cL_\beta$, or equivalently for $\overline{L_\lambda}:=L_\lambda /\lambda(\GG_m)$ as $\lambda(\GG_m)$ acts trivially, which coincides with the semistable locus in $Z_{\min}$ appearing in the definition of the non-reductive stable locus. 

\begin{thm}[Non-reductive GIT quotients of unstable strata, {\cite[Theorem 1.1]{HJ}}]\label{thm nrGIT unstable strata}
For the parabolic group $P_\lambda = U_\lambda \rtimes L_\lambda$ graded by $\lambda$ acting on the blade closure $X:=\overline{Y_\beta}$ of an unstable stratum $S_\beta$ as in \eqref{instab strat 2} with the well-adapted linearisation  $\cL_{\beta(1+ \epsilon)}$, the following statements hold.
\begin{enumerate}[label=\emph{\roman*)}]
\item If \ref{condU0} holds, then there is a projective geometric $\hU_\lambda$-quotient
\[ q_{\hU_\lambda} : \overline{Y_\beta}^{\hU_\lambda-s}= Y_\beta \setminus U Z_\beta \lra \overline{Y_\beta}/\!/ \hU_\lambda \]
and by taking a reductive GIT quotient by $\overline{L_\lambda}$ one obtains a projective categorical $P_\lambda$-quotient of an open subset of the $\hU$-stable locus.
\item If both \ref{condU0} and \ref{condR0} hold, then there is a projective geometric $P_\lambda$-quotient
\[ q_{P_\lambda} : \overline{Y_\beta}^{P_\lambda-s}= Y_\beta^{ss} \setminus U Z_\beta^{ss} \lra \overline{Y_\beta}/\!/ P_\lambda. \]
Moreover, the ring of invariant sections if finitely generated and $q_{P_\lambda}$ coincides with the Proj construction for this invariant ring.
\end{enumerate}
\end{thm}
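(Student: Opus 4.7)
The plan is to deduce the theorem from an application of Theorem \ref{thm Uhat} (the $\hU$-Theorem) to the $\hU_\lambda$-action on $X := \overline{Y_\beta}$, followed by a residual reductive GIT quotient by $\overline{L_\lambda} := L_\lambda/\lambda(\GG_m)$. The key setup is to verify that the non-reductive GIT data of $\S$\ref{sec statements nrGIT} matches the Hesselink--Kirwan--Ness stratification data. First, Definition \ref{def para for 1-PS} tells us that $\lambda$ is central in $L_\lambda$ and acts on $\Lie U_\lambda$ with strictly positive weights, so $\hU_\lambda = U_\lambda \rtimes \lambda(\GG_m)$ is a graded unipotent group in the sense of $\S$\ref{sec gr uni gp}; the ambient embedding $\overline{Y_\beta}\hookrightarrow\PP(V)$ reduces to the linear setting by restriction to a closed subvariety. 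The standard non-reductive GIT rescaling $\cL_{\beta(1+\epsilon)}$ of $\cL_\beta$ (with $0 < \epsilon \ll 1$) is then well-adapted in the sense of Definition \ref{def well-adapted}, and under this linearisation the minimal $\lambda(\GG_m)$-weight space in $\overline{Y_\beta}$ is $Z_\beta$, its attracting set is $Y_\beta$, and the retraction $p$ is $p_\beta$.

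With this dictionary in place, part (i) follows directly from Theorem \ref{thm Uhat} ii): assumption \ref{condU0} is precisely the unipotent stabiliser hypothesis required, which produces the projective geometric $\hU_\lambda$-quotient $q_{\hU_\lambda}: Y_\beta \setminus U Z_\beta \to \overline{Y_\beta}/\!/\hU_\lambda$. The reductive Levi quotient $\overline{L_\lambda}$ acts on $\overline{Y_\beta}/\!/\hU_\lambda$ with the descended linearisation, and classical reductive GIT ($\S$\ref{sec proj GIT}) provides a projective good quotient of the $\overline{L_\lambda}$-semistable locus; composing with $q_{\hU_\lambda}$ yields a projective categorical $P_\lambda$-quotient of an open subset of the $\hU_\lambda$-stable set, as claimed.

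For part (ii) the main obstacle is to identify this preimage explicitly as $Y_\beta^{ss} \setminus U Z_\beta^{ss}$. I would follow the torus Hilbert--Mumford strategy of \cite[Theorem 2.20]{BDHK_proj}, as reformulated in \cite[Theorem 2.28]{HJ}. Fixing a maximal torus $T \subset L_\lambda$ containing $\lambda(\GG_m)$, $\overline{L_\lambda}$-semistability of $q_{\hU_\lambda}(x)$ is detected combinatorially by the convex hull of its $T/\lambda(\GG_m)$-weight set, and the crucial input is that although $q_{\hU_\lambda}$ may lose some torus weights, it preserves those corresponding to the maximal grading $\lambda(\GG_m)$-weight of $x$, while $x \in Y_\beta$ lies in $U Z_\beta$ exactly when every $\lambda$-weight of $x$ attains the minimal value. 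The reductive stabiliser assumption \ref{condR0} ensures that $\overline{L_\lambda}$-semistability coincides with stability on $Z_\beta$, which promotes the categorical $P_\lambda$-quotient of part (i) to a projective geometric quotient on the explicit open set $Y_\beta^{ss} \setminus U Z_\beta^{ss}$.

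Finally, finite generation of the ring of $P_\lambda$-invariant sections of a sufficiently divisible power of $\cL_{\beta(1+\epsilon)}$ will combine Theorem \ref{thm Uhat} iii), which yields finite generation of the $\hU_\lambda$-invariants, with Nagata's Theorem \ref{thm Nagata fgen} applied to the induced action of the reductive group $\overline{L_\lambda}$. That $q_{P_\lambda}$ coincides with the $\proj$ of this invariant ring is then a formal consequence: both sides may be realised as the reductive GIT quotient of $\overline{Y_\beta}/\!/\hU_\lambda$ by $\overline{L_\lambda}$, which by $\S$\ref{sec proj GIT} is the $\proj$ of the $\overline{L_\lambda}$-invariant subring of the homogeneous coordinate ring determined by the descended well-adapted linearisation.
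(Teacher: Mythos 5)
Your proposal follows essentially the same route as the paper: it sets up the dictionary identifying $(Z_{\min},X_{\min},p)$ with $(Z_\beta,Y_\beta,p_\beta)$ and $Z_{\min}^{\overline{L_\lambda}\text{-}ss}$ with $Z_\beta^{ss}$ under the well-adapted twist $\cL_{\beta(1+\epsilon)}$, and then deduces the result from Theorem \ref{thm Uhat} plus the general construction theorem via the torus-weight Hilbert--Mumford comparison, which is exactly how the paper presents this as a special case of \cite[Theorem 2.20]{BDHK_proj}. One small correction: by Lemma \ref{lemma proof step 3}, a point $x\in Y_\beta$ lies in $UZ_\beta$ precisely when all $\GG_m$-weights of its image $q_U(x)$ in $\PP(W)$ are minimal, not when all $\lambda$-weights of $x$ itself are minimal --- the latter condition characterises $Z_\beta$ only, since the graded $U$-action adds strictly higher weight components.
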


We would like to apply this theorem to moduli of objects in an abelian category, where there are moduli-theoretic instability filtrations, such as the Harder--Narasimhan (HN) filtrations for vector bundles \cite{HN}; for example, moduli of sheaves on  projective schemes or moduli of quiver representations. In these examples, the GIT instability stratification has been compared with the moduli-theoretic Harder--Narasimhan stratification \cite{GSZ,hoskins_kirwan,hoskins_affinestrat,hoskins_stackstrat,zamora} and this suggests moduli of objects of fixed HN type should be constructed as non-reductive GIT quotients.

Unfortunately the stabiliser assumptions in Theorem \ref{thm nrGIT unstable strata} are quite restrictive and so it is only possible in quite limited situations. For example, for vector bundles (or Higgs bundles) on a smooth projective curve of fixed HN type, the reductive stabiliser assumption \ref{condR0} only holds for coprime HN types of length $2$ (i.e.\ the HN filtration has two terms and the invariants for the successive quotients are coprime, so that semistability coincides with stability) and even in this case, the unipotent stabiliser assumption rarely holds and so blow-ups are needed (see \cite[$\S$3.2.1]{HJ} for a detailed discussion). In this length 2 coprime case, the non-reductive GIT quotient picks out non-split HN filtrations of length $2$ whose automorphism groups have a fixed dimension; see \cite{BPRios,Jackson} for the case of vector bundles and \cite{Hamilton} for the case of Higgs bundles. To rectify the failure of the reductive stabiliser assumption \ref{condR0}, one can alternatively perform a quotient in stages, using different 1-PSs in the centre of $L_\lambda$ to grade different subgroups of the unipotent radical as in \cite{HJ}; this results in a natural notion of stability for sheaves of a fixed HN type, but again the unipotent stabiliser assumption is rarely satisfied, so a blow-up procedure would be required.

\subsection{Interactions with symplectic geometry and cohomological descriptions}\label{sec sympl nrGIT}

GIT quotients for complex reductive groups are closely related to symplectic quotients for a maximal compact group (see Remark \ref{rmk sympl GIT}); the close relationship between the reductive GIT instability stratification and a Morse-theoretic stratification for the norm square of the moment map was used in \cite{kirwan} to describe the rational Betti numbers of reductive GIT quotients. 

Fortunately, for non-reductive groups with internally graded unipotent radicals, this close relationship with symplectic geometry has been extended by work of B\'{e}rczi and Kirwan \cite{BK_momentmap}, and applied to compute cohomology of non-reductive GIT quotients.

For a reductive group $G$ acting on a smooth complex projective variety $Y$, to construct a moment map one fixes a maximal compact subgroup $K < G$ and a symplectic form $\omega$ invariant under the $K$-action. The moment map for this maximal compact and symplectic form is a $K$-invariant map $\mu_{K,\omega} : Y \ra \fK^*:= \Lie(K)^*$ with the moment map property (that it lifts the infinitesimal action via the correspondence between vector fields and forms given by $\omega$). However, any other maximal compact subgroup is of the form $g^{-1} K g$ and $g^*\omega$ is invariant under the $g^{-1} K g$-action with moment map $\mu_{g^{-1}Kg,g^*\omega} = \mathrm{Ad}^*_{g^{-1}} \circ \mu_{K,\omega} \circ g$. Therefore rather than defining a moment map $\mu_{K,\omega} : Y \ra \fK^*$, B\'{e}rczi and Kirwan instead fix a \emph{$G$-equivariant K\"{a}hler structure} $\Omega$ (namely a $G$-orbit in the space  of pairs $(K,\omega)$ of maximal compact subgroups of $G$ and K\"{a}hler forms on $Y$ which are invariant under this maximal compact) and define an \emph{$\Omega$-moment map} to be a smooth $G$-equivariant map
\[ m_{G,Y,\Omega} : \Omega \times Y \ra \fg^*\]
such that $m_{G,Y,\Omega}(K,\omega,-) = \iota_K \circ \mu_{K,\omega} : Y \ra \fK^* \hookrightarrow \fg^*$ is a moment map for the $K$-action on $(Y,\omega)$, where as $\fg = \fK \otimes \CC$, we have a canonical embedding $\iota_K : \fK^* \hookrightarrow \fg^*$.

Let us explain how B\'{e}rczi and Kirwan define moment maps for a smooth complex projective variety with an action of a graded unipotent group $\hU = U \rtimes \CC^*$. Assume that $\hU < G$ is a subgroup of a reductive group and that $X \subset Y$ is a submanifold of a compact K\"{a}hler manifold $Y$ with a $G$-action on $Y$ that restricts to the given $\hU$-action on $X$ (note that $X$ is not required to be invariant under the $G$-action). As above, fix a $G$-equivariant K\"{a}hler structure $\Omega$ on $Y$ and let $m_{G,Y,\Omega} : \Omega \times Y \ra \fg^*$ be an $\Omega$-moment map, they define $m_{\hU,X,\Omega} : X \times \Omega \ra \hat{\fu}^*$ by restricting the $\Omega$-moment map to $X$ and composing with the restriction $\fg^* \ra \hat{\fu}^*$
\[ \xymatrix{  \Omega \times X \ar[r] \ar[rd]_{m_{\hU,X,\Omega}}  & \fg^* = \fK^* \oplus i \fK^* \ar[d] \\ & \hat{\fu}^* = \RR \oplus i \RR \oplus {\fu}^*.  } \]

Assuming the unipotent stabiliser assumption \ref{condU0} holds, B\'{e}rczi and Kirwan provide a moment map description of the non-reductive GIT quotient: for any $(K,\omega) \in \Omega$, they show
\[ X^{\hU-s} = \hU (\mu_{(K,\omega)}^{\hU})^{-1}(0), \quad \text{where} \quad \mu_{(K,\omega)}^{\hU}:= m_{\hU,X,\Omega}( (K,\omega),-) : X \ra \hat{\fu}^* \] 
and that $0$ is a regular value of $\mu_{(K,\omega)}^{\hU}$ and the inclusion of the zero level set of the moment map in the stable locus induces a diffeomorphism of orbifolds
\[ (\mu_{(K,\omega)}^{\hU})^{-1}(0)/(K \cap \hU) = (\mu_{(K,\omega)}^{\hU})^{-1}(0) / S^1 \simeq X^{\hU-s}/U = X/\!/ \hU, \]
which can be viewed as a non-reductive Kempf-Ness Theorem. They extend this result to an action of $H=U \rtimes R$ with internally graded unipotent radical (see \cite[Theorem 1.1]{BK_momentmap}).

They apply this to compute Betti numbers of non-reductive GIT quotients. Assuming \ref{condU0} holds so that $X^{\hU-s} = X_{\min} \setminus U Z_{\min}$, they show that the stratification $X_{\min} = X^{\hU-s} \sqcup U Z_{\min}$ is $\hU$-equivariantly perfect and so the Poincar\'{e} series of $ X/\!/ \hU$ can be computed from that of $Z_{\min}$. Similarly for  $H=U \rtimes R$ as above, they show the Poincar\'{e} series of $X/\!/H$ can be computed from that of the reductive GIT quotient of $Z_{\min}$ by $\overline{R}:= R/\GG_m$, whose Poincar\'{e} series can in turn be computed as in \cite{kirwan} using the reductive GIT instability stratification of $\S$\ref{sec instability}. 

Furthermore, they adapt methods of Martin \cite{Martin} relating the rational cohomology of GIT quotients by reductive groups to that of GIT quotients for a maximal torus. Martin shows the intersection pairing for the reductive GIT quotient can be computed from that of the GIT quotient for the maximal torus via an integration formula, which can then be combined with torus localisation techniques. In the non-reductive case, this enables a description of the rational cohomology ring and a non-reductive integration formula \cite[Theorems 1.4 and 1.5]{BK_momentmap}, which leads to a residue formula for the intersection pairing on the non-reductive GIT quotient.

These methods are used to prove the polynomial versions of the Green--Griffiths--Lang conjecture and Kobayashi conjecture described in $\S$\ref{sec hyperbolicity}. They can also be applied in the future to describe the cohomology of new moduli spaces constructed via non-reductive GIT.

\medskip

\bibliographystyle{amsplain}
\bibliography{references}

\providecommand{\bysame}{\leavevmode\hbox to3em{\hrulefill}\thinspace}
\providecommand{\MR}{\relax\ifhmode\unskip\space\fi MR }
\providecommand{\MRhref}[2]{%
  \href{http://www.ams.org/mathscinet-getitem?mr=#1}{#2}
}
\providecommand{\href}[2]{#2}
\begin{thebibliography}{10}

\bibitem{AlperBook}
J.~Alper, \emph{Stacks and moduli},
  \url{https://sites.math.washington.edu/~jarod/moduli.pdf}.

\bibitem{Alper_GMS}
J.~Alper, \emph{Good moduli spaces for {A}rtin stacks}, Ann. Inst. Fourier
  (Grenoble) \textbf{63} (2013), no.~6, 2349--2402.

\bibitem{Alper_AMS}
\bysame, \emph{Adequate moduli spaces and geometrically reductive group
  schemes}, Algebr. Geom. \textbf{1} (2014), no.~4, 489--531.

\bibitem{AlperBelmans}
J.~Alper, P.~Belmans, D.~Bragg, J.~Liang, and T.~Tajakka, \emph{Projectivity of
  the moduli space of vector bundles on a curve}, Stacks {P}roject {E}xpository
  {C}ollection ({SPEC}), London Math. Soc. Lecture Note Ser., vol. 480,
  Cambridge Univ. Press, Cambridge, 2022, pp.~90--125.

\bibitem{ABHLX_Kstab}
J.~Alper, H.~Blum, D.~Halpern-Leistner, and C.~Xu, \emph{Reductivity of the
  automorphism group of {$K$}-polystable {F}ano varieties}, Invent. Math.
  \textbf{222} (2020), no.~3, 995--1032.

\bibitem{AlperHallRydh}
J.~Alper, J.~Hall, and D.~Rydh, \emph{The étale local structure of algebraic
  stacks}, \url{https://arxiv.org/abs/1912.06162}.

\bibitem{AHLH}
J.~Alper, D.~Halpern-Leistner, and J.~Heinloth, \emph{Existence of moduli
  spaces for algebraic stacks}, \url{https://arxiv.org/abs/1812.01128}.

\bibitem{Artin}
M.~Artin, \emph{Versal deformations and algebraic stacks}, Invent. Math.
  \textbf{27} (1974), 165--189.

\bibitem{AsokDoran}
A.~Asok and B.~Doran, \emph{On unipotent quotients and some {$\Bbb
  A^1$}-contractible smooth schemes}, Int. Math. Res. Pap. IMRP (2007), no.~2,
  Art. ID rpm005, 51.

\bibitem{BFK}
M.~Ballard, D.~Favero, and L.~Katzarkov, \emph{Variation of geometric invariant
  theory quotients and derived categories}, J. Reine Angew. Math. \textbf{746}
  (2019), 235--303.

\bibitem{BatyrevCox}
V.~V. Batyrev and D.~A. Cox, \emph{On the {H}odge structure of projective
  hypersurfaces in toric varieties}, Duke Math. J. \textbf{75} (1994), no.~2,
  293--338.

\bibitem{BayerEtAl}
A.~Bayer, M.~Lahoz, E.~Macr\`\i, H.~Nuer, A.~Perry, and P.~Stellari,
  \emph{Stability conditions in families}, Publ. Math. Inst. Hautes \'{E}tudes
  Sci. \textbf{133} (2021), 157--325.

\bibitem{BDFHMT}
P.~Belmans, C.~Damiolini, H.~Franzen, V.~Hoskins, S.~Makarova, and T.~Tajakka,
  \emph{Projectivity and effective global generation of determinantal line
  bundles on quiver moduli}, \url{https://arxiv.org/abs/2210.00033}.

\bibitem{BDHK_proj}
G.~B\'{e}rczi, B.~Doran, T.~Hawes, and F.~Kirwan, \emph{Projective commpletions
  of graded unipotent quotients (with an appendix by {E}. {H}amilton)},
  \url{https://arxiv.org/abs/:1607.04181}.

\bibitem{BDHK}
\bysame, \emph{Geometric invariant theory for graded unipotent groups and
  applications}, J. Topol. \textbf{11} (2018), no.~3, 826--855.

\bibitem{BDHK_handbook}
G.~B\'{e}rczi, T.~Hawes, F.~Kirwan, and B.~Doran, \emph{Constructing quotients
  of algebraic varieties by linear algebraic group actions}, Handbook of group
  actions. {V}ol. {IV}, Adv. Lect. Math. (ALM), vol.~41, Int. Press,
  Somerville, MA, 2018, pp.~341--446. \MR{3888691}

\bibitem{BK_momentmap}
G.~B\'{e}rczi and F.~Kirwan, \emph{Moment maps and cohomology of non-reductive
  quotients}, \url{https://arxiv.org/abs/:1909.11495}.

\bibitem{BK_hyperbolicity}
\bysame, \emph{Non-reductive geometric invariant theory and hyperbolicity},
  \url{https://arxiv.org/abs/:1909.11417}.

\bibitem{bb}
A.~Bialynicki-Birula, \emph{Some theorems on actions of algebraic groups}, Ann.
  of Math. \textbf{98} (1973), no.~2, 480 --497.

\bibitem{borel}
A.~Borel, \emph{Linear algebraic groups}, 2nd ed., Graduate Texts in
  Mathematics, vol. 126, Springer, 1991.

\bibitem{BPRios}
L.~Brambila-Paz and R.~Rios~Sierra, \emph{Moduli of unstable bundles of
  {HN}-length two with fixed algebra of endomorphisms},
  \url{https://arxiv.org/abs/2203.08986}.

\bibitem{brion}
M.~Brion, \emph{Introduction to actions of algebraic groups}, Les cours du CIRM
  \textbf{1} (2010), no.~1, 1--22.

\bibitem{Bunnett}
D.~Bunnett, \emph{On the moduli of hypersurfaces in toric orbifolds},
  \url{https://arxiv.org/abs/:1906.00272}.

\bibitem{ChengLian}
R.~Cheng, C.~Lian, and T.~Murayama, \emph{Projectivity of the moduli of
  curves}, Stacks {P}roject {E}xpository {C}ollection ({SPEC}), London Math.
  Soc. Lecture Note Ser., vol. 480, Cambridge Univ. Press, Cambridge, 2022,
  pp.~1--43. \MR{4480532}

\bibitem{Conrad_KM}
B.~Conrad, \emph{Keel-{M}ori theorem via stacks},
  \url{http://math.stanford.edu/~conrad/papers/coarsespace.pdf}.

\bibitem{conrad}
B.~Conrad, \emph{Reductive group schemes}, Autour des sch\'{e}mas en groupes.
  {V}ol. {I}, Panor. Synth\`eses, vol. 42/43, Soc. Math. France, Paris, 2014,
  pp.~93--444.

\bibitem{Demailly}
J.-P. Demailly, \emph{Algebraic criteria for {K}obayashi hyperbolic projective
  varieties and jet differentials}, Algebraic geometry---{S}anta {C}ruz 1995,
  Proc. Sympos. Pure Math., vol.~62, Amer. Math. Soc., Providence, RI, 1997,
  pp.~285--360.

\bibitem{DemazureGabriel}
M.~Demazure and P.~Gabriel, \emph{Groupes alg\'{e}briques. {T}ome {I}:
  {G}\'{e}om\'{e}trie alg\'{e}brique, g\'{e}n\'{e}ralit\'{e}s, groupes
  commutatifs}, Masson \& Cie, \'{E}diteurs, Paris; North-Holland Publishing
  Co., Amsterdam, 1970, Avec un appendice par M. Hazewinkel.

\bibitem{DerksenFreeGa}
H.~Derksen, \emph{Quotients of algebraic group actions}, Automorphisms of
  affine spaces ({C}ura\c{c}ao, 1994), Kluwer Acad. Publ., Dordrecht, 1995,
  pp.~191--200.

\bibitem{dolgachev}
I.~Dolgachev, \emph{Lectures on invariant theory}, Cambridge University Press,
  2003.

\bibitem{dh}
I.~Dolgachev and Y.~Hu, \emph{Variation of geometric invariant theory
  quotients}, P. Math. de L'IH\'ES \textbf{87} (1998), 5--51.

\bibitem{dorankirwan}
B.~Doran and F.~Kirwan, \emph{Towards non-reductive geometric invariant
  theory}, Pure Appl. Math. Q. \textbf{3} (2007), no.~1, Special Issue: In
  honor of Robert D. MacPherson. Part 3, 61--105.

\bibitem{Fauntleroy_first}
A.~Fauntleroy, \emph{Categorical quotients of certain algebraic group actions},
  Illinois J. Math. \textbf{27} (1983), no.~1, 115--124.

\bibitem{Fauntleroy}
\bysame, \emph{Geometric invariant theory for general algebraic groups},
  Compositio Math. \textbf{55} (1985), no.~1, 63--87.

\bibitem{Freudenburg_Hilbert14}
G.~Freudenburg, \emph{A survey of counterexamples to {H}ilbert's fourteenth
  problem}, Serdica Math. J. \textbf{27} (2001), no.~3, 171--192.

\bibitem{Freudenburg_LND}
\bysame, \emph{Algebraic theory of locally nilpotent derivations},
  Encyclopaedia of Mathematical Sciences, vol. 136, Springer-Verlag, Berlin,
  2006, Invariant Theory and Algebraic Transformation Groups, VII.

\bibitem{GKZ}
I.~M. Gelfand, M.~Kapranov, and A.~V. Zelevinsky, \emph{Discriminants,
  resultants and multidimensional determinants}, Birkh{\"a}user, 1994.

\bibitem{Giesker_curves}
D.~Gieseker, \emph{Lectures on moduli of curves}, Tata Institute of Fundamental
  Research Lectures on Mathematics and Physics, vol.~69, Published for the Tata
  Institute of Fundamental Research, Bombay by Springer-Verlag, Berlin-New
  York, 1982.

\bibitem{GSZ}
T.~L. G\'{o}mez, I.~Sols, and A.~Zamora, \emph{A {GIT} interpretation of the
  {H}arder-{N}arasimhan filtration}, Rev. Mat. Complut. \textbf{28} (2015),
  no.~1, 169--190.

\bibitem{GreuelPfister}
G.-M. Greuel and G.~Pfister, \emph{Geometric quotients of unipotent group
  actions}, Proc. London Math. Soc. (3) \textbf{67} (1993), no.~1, 75--105.

\bibitem{Grosshans_obs}
F.~Grosshans, \emph{Observable groups and {H}ilbert's fourteenth problem},
  Amer. J. Math. \textbf{95} (1973), 229--253.

\bibitem{Grosshans_inv}
F.~D. Grosshans, \emph{The invariants of unipotent radicals of parabolic
  subgroups}, Invent. Math. \textbf{73} (1983), no.~1, 1--9.

\bibitem{GrosshansBook}
\bysame, \emph{Algebraic homogeneous spaces and invariant theory}, Lecture
  Notes in Mathematics, vol. 1673, Springer-Verlag, Berlin, 1997.

\bibitem{ghh}
M.~G. Gulbrandsen, L.~H. Halle, and K.~Hulek, \emph{A relative
  {H}ilbert-{M}umford criterion}, Manuscripta Math. \textbf{148} (2015),
  no.~3-4, 283--301.

\bibitem{haboush}
W.~J. Haboush, \emph{Reductive groups are geometrically reductive}, Ann. of
  Math. \textbf{102} (1975), 67--83.

\bibitem{HL_instab}
D.~Halpern-Leistner, \emph{On the structure of instability in moduli theory},
  \url{https://arxiv.org/abs/1411.0627}.

\bibitem{HL}
D.~Halpern-Leistner, \emph{The derived category of a {GIT} quotient}, J. Am.
  Math. Soc. \textbf{28} (2015), no.~3, 871--912.

\bibitem{HL_Theta}
\bysame, \emph{{$\Theta$}-stratifications, {$\Theta$}-reductive stacks, and
  applications}, Algebraic geometry: {S}alt {L}ake {C}ity 2015, Proc. Sympos.
  Pure Math., vol.~97, Amer. Math. Soc., Providence, RI, 2018, pp.~349--379.

\bibitem{Hamilton}
E.~Hamilton, \emph{Stratifications and quasi-projective coarse moduli spaces
  for the stack of {H}iggs bundles}, \url{https://arxiv.org/abs/1911.13194}.

\bibitem{HN}
G.~Harder and M.~S. Narasimhan, \emph{On the cohomology groups of moduli spaces
  of vector bundles on curves}, Math. Ann. \textbf{212} (1974/75), 215--248.

\bibitem{hartshorne}
R.~Hartshorne, \emph{Algebraic geometry}, Graduate Texts in Mathematics, No.
  52, Springer-Verlag, New York-Heidelberg, 1977.

\bibitem{Heinloth_HMstacks}
J.~Heinloth, \emph{Hilbert-{M}umford stability on algebraic stacks and
  applications to {$\cG$}-bundles on curves}, \'{E}pijournal G\'{e}om.
  Alg\'{e}brique \textbf{1} (2017), Art. 11, 37.

\bibitem{hesselink}
W.~H. Hesselink, \emph{Desingularizations of varieties of nullforms}, Invent.
  Math. \textbf{55} (1979), no.~2, 141--163.

\bibitem{hoskins_affinestrat}
V.~Hoskins, \emph{Stratifications associated to reductive group actions on
  affine spaces}, Q. J. Math. \textbf{65} (2014), no.~3, 1011--1047.

\bibitem{hoskins_stackstrat}
\bysame, \emph{Stratifications for moduli of sheaves and moduli of quiver
  representations}, Algebr. Geom. \textbf{5} (2018), no.~6, 650--685.

\bibitem{HJ}
V.~Hoskins and J.~Jackson, \emph{Quotients by parabolic groups and moduli
  spaces of unstable objects}, \url{https://arxiv.org/abs/2111.07429}.

\bibitem{hoskins_kirwan}
V.~Hoskins and F.~Kirwan, \emph{Quotients of unstable subvarieties and moduli
  spaces of sheaves of fixed {H}arder-{N}arasimhan type}, Proc. Lond. Math.
  Soc. (3) \textbf{105} (2012), no.~4, 852--890.

\bibitem{Jackson}
J.~Jackson, \emph{Moduli spaces of unstable objects: Sheaves of
  {H}arder-{N}arasimhan length 2}, \url{https://arxiv.org/abs/2111.07428}.

\bibitem{KeelMori}
S.~Keel and S.~Mori, \emph{Quotients by groupoids}, Ann. of Math. (2)
  \textbf{145} (1997), no.~1, 193--213.

\bibitem{kempf_ness}
G.~Kempf and L.~Ness, \emph{The length of vectors in representation spaces},
  Algebraic Geometry, Lecture Notes in Mathematics, vol. 732, Springer Berlin /
  Heidelberg, 1979, pp.~233--243.

\bibitem{kempf}
G.~R. Kempf, \emph{Instability in invariant theory}, Ann. of Math. \textbf{108}
  (1978), no.~2, 299--316 (English).

\bibitem{king}
A.~D. King, \emph{Moduli of representations of finite dimensional algebras},
  Quart. J. Math. Oxford \textbf{45} (1994), 515--530.

\bibitem{kirwan}
F.~C. Kirwan, \emph{Cohomology of quotients in symplectic and algebraic
  geometry}, Mathematical Notes, no.~31, Princeton University Press, 1984.

\bibitem{Kirwan_desing}
F.~C. Kirwan, \emph{Partial desingularisations of quotients of nonsingular
  varieties and their {B}etti numbers}, Ann. of Math. (2) \textbf{122} (1985),
  no.~1, 41--85.

\bibitem{Kollar}
J.~Koll\'{a}r, \emph{Quotient spaces modulo algebraic groups}, Ann. of Math.
  (2) \textbf{145} (1997), no.~1, 33--79. \MR{1432036}

\bibitem{Kraft_survey}
H.~Kraft, \emph{Challenging problems on affine {$n$}-space}, no. 237, 1996,
  S\'{e}minaire Bourbaki, Vol. 1994/95, pp.~Exp. No. 802, 5, 295--317.

\bibitem{Laza}
R.~Laza, \emph{G{IT} and moduli with a twist}, Handbook of moduli. {V}ol. {II},
  Adv. Lect. Math. (ALM), vol.~25, Int. Press, Somerville, MA, 2013,
  pp.~259--297.

\bibitem{luna}
D.~Luna, \emph{Slices \'{e}tales}, Bull. Soc. Math. de France \textbf{33}
  (1973), 81--105.

\bibitem{mw}
J.~Marsden and A.~Weinstein, \emph{Reduction of symplectic manifolds with
  symmetry}, Rep. Math. Phys. \textbf{5} (1974), 121--130.

\bibitem{Martin}
S.~Martin, \emph{Symplectic quotients by a nonabelian group and by its maximal
  torus}, \url{https://arxiv.org/abs/0001002}.

\bibitem{milneAGS}
J.~S. Milne, \emph{The basic theory of affine group schemes},
  \url{https://www.jmilne.org/math/CourseNotes/AGS.pdf}.

\bibitem{Morrison}
I.~Morrison, \emph{G{IT} constructions of moduli spaces of stable curves and
  maps}, Surveys in differential geometry. {V}ol. {XIV}. {G}eometry of
  {R}iemann surfaces and their moduli spaces, Surv. Differ. Geom., vol.~14,
  Int. Press, Somerville, MA, 2009, pp.~315--369.

\bibitem{Mumford_vb}
D.~Mumford, \emph{Projective invariants of projective structures and
  applications}, Proc. {I}nternat. {C}ongr. {M}athematicians ({S}tockholm,
  1962), Inst. Mittag-Leffler, Djursholm, 1963, pp.~526--530.

\bibitem{mumford}
D.~Mumford, J.~Fogarty, and F.~Kirwan, \emph{Geometric invariant theory}, third
  ed., Springer, 1993.

\bibitem{nagata59}
M.~Nagata, \emph{On the 14th problem of {H}ilbert}, Amer. J. Math. \textbf{81}
  (1959), 766--772.

\bibitem{nagata}
\bysame, \emph{Invariants of a group in an affine ring}, J. Math. Kyoto Univ.
  \textbf{3} (1963), no.~3, 369--378.

\bibitem{ns}
M.~Narasimhan and C.~Seshadri, \emph{Stable and unitary vector bundles on a
  compact {R}iemann surface}, Ann. of Math. \textbf{82} (1965), no.~2,
  540--567.

\bibitem{ness}
L.~Ness, \emph{A stratification of the null cone via the moment map (with an
  appendix by {D}. {M}umford)}, Amer. J. Math. \textbf{106} (1984), no.~6,
  1281--1329.

\bibitem{newstead}
P.~E. Newstead, \emph{Introduction to moduli problems and orbit spaces},
  T.I.F.R. Lecture Notes, Springer-Verlag, 1978.

\bibitem{Popov}
V.~L. Popov, \emph{On {H}ilbert's theorem on invariants}, Dokl. Akad. Nauk SSSR
  \textbf{249} (1979), no.~3, 551--555.

\bibitem{Qiao}
Y.~Qiao, \emph{{GIT} for $\hat{U}$-actions on algebraic $\mathbb{C}$-schemes},
  \url{https://arxiv.org/abs/2204.13884}.

\bibitem{RiedlYang}
E.~Riedl and D.~Yang, \emph{Applications of a {G}rassmannian technique to
  hyperbolicity, {C}how equivalency, and {S}eshadri constants}, J. Algebraic
  Geom. \textbf{31} (2022), no.~1, 1--12.

\bibitem{Rosenlicht}
M.~Rosenlicht, \emph{A remark on quotient spaces}, An. Acad. Brasil. Ci.
  \textbf{35} (1963), 487--489.

\bibitem{schmittbook}
A.~H.~W. Schmitt, \emph{Geometric invariant theory and decorated principal
  bundles}, Zurich lectures in advanced mathematics, European mathematical
  society, 2008.

\bibitem{Seshadri_weitzenbock}
C.~S. Seshadri, \emph{On a theorem of {W}eitzenb\"{o}ck in invariant theory},
  J. Math. Kyoto Univ. \textbf{1} (1961/62), 403--409.

\bibitem{seshadri}
C.~S. Seshadri, \emph{Spaces of unitary vector bundles on a compact {R}iemann
  surface}, Ann. of Math. \textbf{85} (1967), no.~1, 303--336.

\bibitem{seshadri_quotients}
C.~S. Seshadri, \emph{Quotient spaces modulo reductive algebraic groups}, Ann.
  of Math. (2) \textbf{95} (1972), 511--556; errata, ibid. (2) 96 (1972), 599.

\bibitem{Seshadri_GIT}
\bysame, \emph{Geometric reductivity over arbitrary base}, Advances in Math.
  \textbf{26} (1977), no.~3, 225--274.

\bibitem{simpson}
C.~T. Simpson, \emph{Moduli of representations of the fundamental group of a
  smooth projective variety}, Inst. Hautes Etudes Sci. Publ. Math. \textbf{79}
  (1994), 47--129.

\bibitem{Siu}
Y.-T. Siu, \emph{Hyperbolicity in complex geometry}, The legacy of {N}iels
  {H}enrik {A}bel, Springer, Berlin, 2004, pp.~543--566.

\bibitem{thaddeus}
M.~Thaddeus, \emph{Geometric invariant theory and flips}, J. Amer. Math. Soc.
  \textbf{9} (1996), no.~3, 691--723.

\bibitem{thomas}
R.~P. Thomas, \emph{Notes on {GIT} and symplectic reduction for bundles and
  varieties}, Surveys in differential geometry \textbf{10} (2006), 221--273.

\bibitem{Toda}
Y.~Toda, \emph{Moduli stacks and invariants of semistable objects on {$K3$}
  surfaces}, Adv. Math. \textbf{217} (2008), no.~6, 2736--2781.

\bibitem{Weitzenbock}
R.~Weitzenb\"{o}ck, \emph{\"{U}ber die {I}nvarianten von linearen {G}ruppen},
  Acta Math. \textbf{58} (1932), no.~1, 231--293.

\bibitem{Winkelmann}
J.~Winkelmann, \emph{Invariant rings and quasiaffine quotients}, Math. Z.
  \textbf{244} (2003), no.~1, 163--174.

\bibitem{zamora}
A.~Zamora, \emph{On the {H}arder-{N}arasimhan filtration for finite dimensional
  representations of quivers}, Geom. Dedicata \textbf{170} (2014), 185--194.

\end{thebibliography}

\medskip \medskip

\noindent{Radboud University, IMAPP, PO Box 9010,
6525 AJ Nijmegen, Netherlands} 

\medskip \noindent{\texttt{v.hoskins@math.ru.nl}}

\end{document}